 \newcommand{\semi}{\hbar}
 \newcommand{\Psisch}{\Psi_{\scl,\semi}}
\begin{document}

\title[Dirac operators and local invariants on perturbations of Minkowski space]{Dirac operators and local invariants on perturbations of Minkowski space}

\author{Nguyen Viet Dang}
\address{IRMA, Université de Strasbourg, Strasbourg, France \vspace{-0.3cm}}
\address{Institut Universitaire de France, Paris, France}
\email{nvdang@unistra.fr}
\author{Andr\'as \textsc{Vasy}}
\address{Department of Mathematics, Stanford University, Stanford CA, USA}
\email{andras@stanford.edu}
\author{Micha{\l} \textsc{Wrochna}}
%\author[]{\normalsize Nguyen Viet \textsc{Dang} \& Micha{\l} \textsc{Wrochna}}
\address{Mathematical Institute, Universiteit Utrecht, Utrecht, The Netherlands \vspace{-0.3cm}} \address{Department of Mathematics \& Data Science, Vrije Universiteit Brussel, Brussels, Belgium}
 \email{{m.wrochna@uu.nl}}
%\email{michal.wrochna@vub.be}
\keywords{wave equation, microlocal analysis, spectral zeta functions, Dirac operators}

\begin{abstract} For small perturbations of Minkowski space, we show that the square of the Lorentzian Dirac operator $P= -\D^2$ has real spectrum apart from possible poles in a horizontal strip. Furthermore, for $\varepsilon>0$ we relate the poles of the spectral zeta function  density of $P-i\varepsilon$ to local  invariants, in particular to the Lorentzian scalar curvature. The proof involves microlocal propagation and radial estimates in a resolved  scattering calculus as well as high energy estimates in a further resolved classical-semiclassical calculus.
\end{abstract}

\maketitle

\section{Introduction}

\subsection{Introduction and main result} The spectral theory of geometric differential operators on a Lorentzian manifold $(M^\circ,g)$ is an emerging topic with surprising features. In spite of   non-ellipticity, the Laplace--Beltrami or wave operator $\square_g$ has been shown to be essentially self-adjoint in a variety of settings, including static spacetimes \cite{derezinski}, asymptotically Minkowski spacetimes  \cite{vasyessential,nakamurataira,Nakamura2022,JMS}, and Cauchy-compact asymptotically static spacetimes \cite{Nakamura2022a}, see also \cite{Tadano2019,taira,kaminski,Taira2020a,cdv,Taira2022,Wrochna2022,Derezinski2024} for other results on spectral properties of $\square_g$ and the limiting absorption principle and \cite{GHV,Vasy2017b,GWfeynman,Gerard2019b,vasywrochna,MolodhykVasy,Derezinski2024} for the closely related subject of Feynman propagators. Furthermore,  at least in the asymptotically Minkowski (or ultra-static) case,  it turns out to be possible to define a spectral zeta  function density as the meromorphic extension of the trace density
$$
(\square_g - i \varepsilon)^{-\alpha}(x,x)\in \cf(M^\inti),
$$
initially defined for large $\Re \alpha$ by restricting the Schwartz kernel of $(\square_g - i \varepsilon)^{-\alpha}$ to the diagonal \cite{Dang2020,Dang2022}. The residues  are shown to be local geometric invariants analogous to the well-known Riemannian case, and they can be equivalently obtained through a generalization of the Guillemin--Wodzicki residue as a Pollicott--Ruelle  resonance \cite{Dang2021}.   

These parallels to the Riemannian case appear however to break down if one considers a Lorentzian \emph{Dirac operator} $\D$, the main reason being that the Hermitian form for which $\D$ is formally self-adjoint is \emph{no longer positive definite} in the Lorentzian setting. This has raised  uncertainties as to whether the program of non-commutative geometry centered on Dirac operators initiated by Connes and co-authors \cite{Connes1996,Connes2008a,Chamseddine1997,Chamseddine2007} can be carried out in Lorentzian signature, even though many attempts at generalizing  the formalism exist,  see e.g.~\cite{Moretti2003,VanSuijlekom2004,Paschke2004,Strohmaier2006,DAndrea2016,Devastato2018,martinetti} and references therein for results mostly focused on settings with special symmetries. In a different spirit,  a geometric index theory for Lorentzian Dirac was pioneered by Bär--Strohmaier \cite{Bar2019,Baer2020}, we note however that this does not address the question of existence of a spectral zeta function. 

In the present paper we consider the operator $P=-\D^2$ on small perturbations of Minkowski space and propose to view it as an operator which \emph{modulo a decaying term} is formally self-adjoint  for an auxiliary positive scalar product $\bra \cdot , \cdot\ket$ (for instance the one used in field quantization, see \eqref{eq:scal}), in the sense that $P- P^*\in \Psi_{\rm sc}^{1,-1}$  has coefficients decaying at  infinity (in the precise sense of the scattering calculus  $\Psi^{m,\ell}_\sc$ recalled in see \sec{ss:bi}) and has small $\Psi_{\rm sc}^{1,-1}$ seminorms.  We show that despite being an order $1$ perturbation of a formally self-adjoint operator, $P$ enjoys relatively good spectral properties.

\begin{theorem}[cf.~Theorems \ref{thm:sp}  and \ref{thm:sp2}] \label{thm1} Let  $\D\in {\rm Diff}^1_{\sc}(M;E)$ be a Dirac operator on a small perturbation of Minkowski space (see Definition \ref{def:SM}), and  let $P =-\D^2$ acting on test sections. Then the spectrum of the closure of $P$   consists at most of $\rr$ and isolated points that lie in  $\{ 0<\module{\Im \lambda} \leq R \}$ for some $R>0$. 
Moreover, the resolvent $(P-z)^{-1}:C^\infty_{\rm c}\to \pazocal{D}'$ is meromorphic in $\{\Im z \neq0\}$ with poles of finite multiplicities (called {resonances}). For each resonance $z_0$, any solution $u$ of $Pu=z_0u$ is  $C^\infty$ in $M^\inti$.
\end{theorem}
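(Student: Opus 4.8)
The plan is to place $P-z$ into a Fredholm framework on variable-order scattering Sobolev spaces and then combine analytic Fredholm theory with a high-energy invertibility statement. Since $P=-\D^2$ has the same principal symbol as the d'Alembertian $\square_g$ (acting diagonally on the bundle $E$), its characteristic set is the light cone, the null bicharacteristics project to null geodesics, and over $\partial M$ one has the usual scattering radial sets at the corner; the lower-order data, in particular the non-self-adjoint part $B=P-P^*\in\Psi_{\rm sc}^{1,-1}$, enter only at the subprincipal level. First I would fix variable orders $(s,\ell)$ that are monotone along the Hamilton flow and satisfy at each radial set the threshold inequality dictated by the sign of $\Im z$, and prove, for $\Im z\neq0$, that $P-z\colon\mathcal X\to\mathcal Y$ is Fredholm between the corresponding spaces. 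The estimates are the standard package, assembled here in a \emph{resolved} scattering calculus --- a blow-up of the corner is needed so that the radial structure and the threshold quantities are uniform: microlocal elliptic regularity away from the light cone in $T^*M^\circ$ and, at the scattering face, away from the corner, where $\Im z\neq 0$ makes $P-z$ elliptic since its scattering symbol $|\xi|_g^2-z$ has imaginary part $-\Im z\neq0$; real-principal-type propagation of singularities along the null bicharacteristics (valid also over $\partial M$ away from the radial sets); and radial-point estimates at the radial sets. Running these, together with the analogous estimates for $(P-z)^*=P^*-\bar z$ on the dual spaces, gives the Fredholm property.

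Next I would upgrade this to genuine invertibility for large $|z|$ with $\Im z\neq0$, working semiclassically with parameter $h\sim|z|^{-1/2}$ in a further resolved classical--semiclassical calculus. Running the same elliptic, propagation, and radial estimates semiclassically produces $\|u\|_{\mathcal X}\lesssim\|(P-z)u\|_{\mathcal Y}$ \emph{with no error term} once $|z|$ is large; the crucial input is that $B=P-P^*$ is \emph{one order lower} and \emph{decaying} with \emph{small} $\Psi_{\rm sc}^{1,-1}$ seminorms, so that in the semiclassical scaling $B$ contributes an $O(h)$-small term absorbed into the error --- effectively one is estimating the formally self-adjoint model. Hence $P-z$ is invertible whenever $|\Im z|>R$ for some $R>0$, so there is no spectrum there, and invertible at a point of each of $\{\Im z>0\}$ and $\{\Im z<0\}$; by constancy of the index on each (connected) half-plane, $P-z$ then has index $0$ throughout $\{\Im z\neq0\}$.

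With these two ingredients the structural statements follow. Analytic Fredholm theory on each half-plane (connected, index $0$, invertible at a point) shows that $(P-z)^{-1}$, equivalently the distributional resolvent $C^\infty_{\rm c}\to\mathcal D'$, extends meromorphically to $\{\Im z\neq0\}$ with poles of finite rank, the resonances, all lying in $\{0<|\Im z|\leq R\}$. To identify the off-axis spectrum of the closure $\overline P$ with the resonances I would run a bootstrap: if $u\in L^2$ and $(P-z)u\in L^2$, the elliptic, propagation, and radial estimates with $L^2$ right-hand side place $u$ in $\mathcal X$; conversely, since test sections are dense in $\mathcal X$ and $P-z$ is continuous from $\mathcal X$ into a space containing $L^2$, the Fredholm inverse of an $L^2$ datum lies in $\mathrm{Dom}(\overline P)$ --- so, for $\Im z\neq0$, $z\in\sigma(\overline P)$ iff $z$ is a resonance, giving $\sigma(\overline P)\subseteq\mathbb R\cup\{0<|\Im z|\leq R\}$ with the part off $\mathbb R$ discrete. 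Finally, for a resonance $z_0$ and a solution $u\in\mathcal X$ of $(P-z_0)u=0$: $P-z_0$ is microlocally elliptic on $T^*M^\circ$ off the light cone, so the wavefront set of $u$ lies in the characteristic set; the radial estimate in $\mathcal X$ forces empty scattering wavefront set at the radial sets where $\mathcal X$ is above threshold; and since perturbations of Minkowski space are nontrapping --- every null bicharacteristic joins the two radial sets --- propagation of singularities sweeps the entire light cone, yielding $\mathrm{WF}(u)\cap T^*M^\circ=\emptyset$, i.e.\ $u\in C^\infty(M^\circ)$.

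The main obstacle is the high-energy step in the regime $|\Re z|\to\infty$ with $|\Im z|$ comparable to $R$: a naive argument treating $P-z$ as an order-one perturbation of its formally self-adjoint part $A=\tfrac12(P+P^*)$ fails there, for even granting $\|(A-z)^{-1}\|_{L^2\to L^2}\leq|\Im z|^{-1}$, interpolating against the $L^2\to H^2_{\rm sc}$ bound and pairing with $\|B\|_{H^1_{\rm sc}\to L^2}$ leaves a factor of order $|z|^{1/2}/|\Im z|$ which blows up along the strip; one genuinely needs the microlocal semiclassical propagation estimate, which uses that at high energy the Hamilton flow spends its time near $\partial M$, where $B$ is small by its decay. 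A second, purely technical, difficulty is constructing the resolved and further-resolved semiclassical scattering calculi and checking the threshold inequalities at the radial sets --- where the relevant subprincipal quantity is shifted by the non-self-adjoint part and must still land on the side fixed by $\mathrm{sgn}(\Im z)$ --- together with the domain-identification bootstrap, which requires the variable-order spaces to be arranged symmetrically around $L^2$.
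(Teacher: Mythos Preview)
Your outline is correct and matches the paper's strategy: resolved scattering Fredholm estimates, high-energy invertibility, analytic Fredholm theory, and a bootstrap for smoothness of resonant states. Two points where your account diverges from the paper are worth flagging.

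First, the ``main obstacle'' you identify --- the regime $|\Re z|\to\infty$ with $|\Im z|$ merely comparable to $R$ --- is not needed. The theorem only asserts that off the real line the spectrum lies in a horizontal strip and is discrete; for this it suffices to prove invertibility for \emph{large $|\Im z|$} (giving one invertible point per half-plane and the strip bound), after which analytic Fredholm theory handles all of $\{\Im z\neq 0\}$, including $|\Re z|\to\infty$ at bounded $|\Im z|$. The paper never establishes direct invertibility in that regime.

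Second, the mechanism for large-$|\Im z|$ invertibility and the role of the smallness of $B=P-P^*$ are not quite as you describe. In the paper the positive commutator identity produces on the left a term $|\Im\lambda|^{1/2}\|u\|_{s-1/2,k,\ell}$ coming from $2(\Im\lambda)\langle Au,u\rangle$; for $|\Im\lambda|$ large this dominates and absorbs the compact error directly --- no appeal to $B$ being $O(h)$-small is needed (that $B$ has differential order one lower is automatic and irrelevant here). The smallness of $B$ in $\Psi_{\rm sc}^{1,-1}$ enters elsewhere: it shifts the threshold numbers at the radial sets by a quantity $\tilde\beta$ with $|\tilde\beta|<\tfrac12$, so that one can still choose $s\in[0,1]$, $k=\ell=0$ with $(k-s)|_{L_-}>-\tfrac12+\tilde\beta$ and $(k-s)|_{L_+}<-\tfrac12+\tilde\beta$, which is what puts the Fredholm estimate on $L^2$-compatible spaces. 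The semiclassical (further resolved) calculus is invoked in the paper primarily to supply the \emph{elliptic} estimates in the resolved setting with the correct uniform $\lambda$-behavior, completing the Fredholm package; it is not the source of the high-energy invertibility per se.
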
 

At this point the precise structure of Dirac operators is not important and only the analytic features of $P$ matter  (this involves being a small perturbation of a non-trapping operator in the sense of sufficiently small $\Psi^{2,0}_\sc$ semi
norms), see Theorems \ref{thm:sp}  and \ref{thm:sp2} for  more general statements, with  sufficiently small perturbations of Minkowski space  being a special case  as discussed in the paragraph following   Definition \ref{def:SM}.  

Theorem \ref{thm1} allows us to define complex powers $(P-i \varepsilon)^{-\alpha}$ for $\varepsilon>0$ using the resolvent of $P$ through a complex contour integral that bypasses resonances (see Definition \ref{def:cp}). While the definition of $(P-i \varepsilon)^{-\alpha}$ may depend on the choice of integration contour, we show that the  resulting ambiguity is at most a finite rank smoothing operator.  We then combine the microlocal estimates used in the proof of  Theorem \ref{thm1} with Hadamard parametrix techniques to prove the following result on the spectral $\zeta$-function density of $P-i \varepsilon$,  the residues of which are not affected by the finite rank ambiguity. As in \cite{Dang2020}  we focus on the case when $n=\dim M$ is even.  

\begin{theorem}[cf.~Theorem \ref{thm:final}]  \label{thm2} For all $\varepsilon>0$, the Schwartz kernel of $(P-i \varepsilon)^{-\cv}$ has for $\Re\cv>\frac{n}{2}$ a well-defined on-diagonal restriction $(P-i \varepsilon)^{-\cv}(x,x)$, which extends as a meromorphic function with poles at $\{\n2,$ $\n2-1$, $\n2-2$, $\dots$, ${1}\}$. Furthermore, 
$$ 
\bea 
\lim_{\varepsilon\rightarrow 0^+} \res_{\cv=\frac{n}{2}-1} \tr_E\left((P- i\varepsilon)^{-\cv}\right)(x,x)= \frac{ \rk (E) R_g(x)  }{{i}6(4\pi)^{\n2} \Gamma\big(\frac{n}{2}-1\big) } +  \frac{  2 \tr_E \big( F^E\big)(x)  }{{i}(4\pi)^{\n2} \Gamma\big(\frac{n}{2}-1\big) },
\eea 
$$
where $R_g$ is the scalar curvature of $(M,g)$ and $F^E$ is the twisting curvature of $E\to M$. 
\end{theorem}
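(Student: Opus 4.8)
The plan is to reduce the computation of the on-diagonal residue of $(P-i\varepsilon)^{-\cv}$ to an explicit Hadamard-type parametrix calculation, following the strategy of \cite{Dang2020,Dang2022} but adapted to the operator $P=-\D^2$. First I would use Theorem \ref{thm1} to justify, for $\varepsilon>0$ fixed, the Mellin-type representation
$$
(P-i\varepsilon)^{-\cv}=\frac{1}{2\pi i}\oint_\gamma z^{-\cv}(P-i\varepsilon-z)^{-1}\,dz,
$$
where $\gamma$ is a contour enclosing the spectrum of $P-i\varepsilon$ and bypassing the finitely many resonances in the strip $\{0<|\Im\lambda|\le R\}$; the finite-rank smoothing ambiguity from the choice of contour contributes a holomorphic (in $\cv$) term to the on-diagonal kernel, hence does not affect any residue. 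The strategy is then to split $(P-i\varepsilon-z)^{-1}$ into a local parametrix piece, built from a Hadamard/Riesz-type construction near the diagonal using the microlocal estimates of Theorem \ref{thm1} (non-trapping, radial estimates in $\Psi^{m,\ell}_\sc$), and a remainder that is smoothing, uniformly enough in $z$ along $\gamma$ that it yields an entire contribution to $(P-i\varepsilon)^{-\cv}(x,x)$.

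Next I would carry out the local parametrix computation. Since $P=-\D^2$ and $\D$ is a Dirac operator in $\mathrm{Diff}^1_\sc(M;E)$, the Lichnerowicz--Weitzenböck formula gives $P=-\D^2 = \nabla^*\nabla \pm \tfrac{1}{4}R_g + (\text{twisting curvature term})$, so $P$ is, up to the metric sign convention, a Laplace-type operator whose subleading term is determined by $R_g$ and $F^E$. The Hadamard coefficients $u_0,u_1,\dots$ of $P$ then satisfy the usual transport equations, with $u_0=\mathrm{Id}_E$ along the diagonal and $u_1(x,x)$ given by the classical formula $u_1(x,x)=\tfrac{1}{6}R_g(x)\,\mathrm{Id}_E - E_0(x)$ where $E_0$ is the endomorphism appearing in $P=\nabla^*\nabla+E_0$; plugging in the Weitzenböck identity produces exactly the combination $\tfrac{1}{6}R_g\,\mathrm{Id}_E + (\text{explicit multiple of }F^E)$ seen in the theorem. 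I would then feed these coefficients into the complex-power parametrix: the on-diagonal kernel of $(P-i\varepsilon)^{-\cv}$ has, near $\Re\cv = \tfrac{n}{2}-k$, a simple pole whose residue is $\tfrac{1}{(4\pi)^{n/2}\Gamma(n/2-k)}\,u_k(x,x)$ times the appropriate power of $i$ coming from the $-i\varepsilon$ Wick rotation (this $i$-factor is where the Lorentzian signature enters and accounts for the explicit $1/i$ in the statement). Taking $k=1$, tracing over $E$, and using $\mathrm{tr}_E(\mathrm{Id}_E)=\mathrm{rk}(E)$ gives the stated formula; the limit $\varepsilon\to0^+$ is then justified because the residue, being a local quantity computed from the parametrix, depends continuously (indeed smoothly) on $\varepsilon$ down to $\varepsilon=0$.

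The main obstacle I anticipate is controlling the remainder term uniformly along the contour $\gamma$ as $z\to\infty$ in the regime where $z$ approaches the continuous spectrum $\rr$: one needs the parametrix to be accurate enough — i.e., enough Hadamard coefficients subtracted — that the remainder $(P-i\varepsilon-z)^{-1}-(\text{parametrix})$ maps into a space of operators whose on-diagonal kernels are integrable against $z^{-\cv}\,dz$ for $\Re\cv$ only slightly larger than $\tfrac{n}{2}-1$, and that this bound is uniform in $z$ along the deformed contour. This is precisely where the high-energy estimates in the resolved classical-semiclassical calculus are needed (rescaling $z=h^{-2}\sigma$ and running semiclassical non-trapping/radial estimates), so a careful bookkeeping of decay orders in $\Psi^{m,\ell}_\sc$ versus semiclassical orders will be the technical heart of the argument. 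A secondary subtlety is that $P$ is only self-adjoint modulo $\Psi^{1,-1}_\sc$ with small seminorms, so the Hadamard parametrix for $P$ must be compared with that of its (genuinely self-adjoint) principal part; since the discrepancy is a decaying first-order term it does not affect $u_0(x,x)$ or $u_1(x,x)$, but I would need to verify this does not spoil the meromorphic continuation or shift the poles, which again reduces to the quantitative smallness hypotheses already used in Theorem \ref{thm1}.
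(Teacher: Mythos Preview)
Your proposal is correct and follows essentially the same route as the paper: contour-integral definition of complex powers, Hadamard parametrix for $(P-\lambda)^{-1}$ with transport equations for the $u_k$, uniform Feynman-wavefront and high-energy control of the remainder via the resolved/semiclassical estimates, and computation of $u_1(x,x)$ from the Lichnerowicz formula. Two small corrections: the Hadamard parametrix is a purely local construction depending only on the full symbol of $P$, so no comparison with a self-adjoint principal part is needed (your ``secondary subtlety'' is a non-issue); and in the actual computation one finds $u_1(x,x)=\tfrac{1}{12}R_g\,\mathrm{Id}_E+F^E$ (the $\tfrac{1}{6}$ from the metric expansion combines with the $-\tfrac{1}{4}$ from Lichnerowicz), which after inserting into the residue formula yields the $1/6$ in the statement.
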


In particular this gives a positive answer to the question of the validity of Connes' spectral action principle in Lorentzian signature.  We note that one can  compute other residues, and it is also possible to obtain a small $h$ expansion for $f(h(P+i\varepsilon))$ for suitable Schwartz functions $f$ similarly as in \cite{Dang2020} and recover the Lorentzian analogues of heat kernel coefficients, see Remark \ref{smallh}.

\subsection{Idea of proofs, remarks} In proving Theorem \ref{thm1}, the primary difficulty is that existing results for  $\square_g$ based on the scattering calculus $\Psi^{m,\ell}_\sc$ rely on formal self-adjointness in crucial steps of the proofs. In particular, while the microlocal propagation and radial estimates employed in \cite{vasyessential} generalize well, they need to be supplemented by an extra borderline positive commutator argument which does not. In a nutshell, the reason why the former  are insufficient is that  they provide regularity and decay statements \emph{conditionally to threshold conditions} at the radial sets; this is in fact characteristic of radial estimates as pioneered by Melrose \cite{melrosered}. In practice it  means here  that  for $\lambda \in \cc \setminus \rr$ one can obtain the implication
\beq\label{eq:impli}
u\in L^2, \  (P-\lambda)u= 0  \implies  u \in H_{\rm sc}^{\12,-\12-\epsilon}
\eeq
for $\epsilon>0$ (here $H_{\rm sc}^{s,\ell}$ are scattering Sobolev spaces recalled in \sec{ss:bi}), but beyond that extra arguments are needed (we note that as remarked in \cite{Taira2020a},  under more restrictive non-trapping assumptions \eqref{eq:impli} follows already from a local smoothing estimate of Chihara \cite{Chihara2002}).

 To tackle this, the idea is to replace the $\Psi_{\sc}^{m,\ell}$ calculus, which microlocalizes the fiber-compactified scattering  cotangent bundle $\co$, by a more precise resolved calculus $\Psiscq^{m,k,\ell}$ obtained by blowing up the corner  $\corner$  of $\co$.   This resolution process produces a calculus which microlocally in the front face interior is equivalent to the \emph{quadratic scattering calculus}  \cite{Wunsch1999a}, see Remark \ref{rem:qsc}. It is analogous to a construction of second microlocalization, in
 which the well-behaved perspective is blowing up the corner
of the fiber-compactified $\b$-cotangent bundle, see
 \cite{vasylap,vasyresolvent}.  As a result we get more flexibility on the level of threshold conditions and results in more precise Fredholm estimates with greater room for trade-off between regularity and decay in the relevant microlocal regions; one can think of this as a fully microlocal point of view which in particular incorporates the subelliptic estimate of Taira \cite{Taira2020a}. 
 
 At this point we remark that in parallel to the present work, Jia--Molodyk--Sussman \cite{JMS} show that $\square_g$ is essentially self-adjoint on more general asymptotically Minkowski, allowing also for perturbations of Minkowski in the sense of Einstein metrics. The result uses radial estimates in the $\Psi_{\rm de,sc}^{m,\ell}$ calculus recently developed by Sussman \cite{Sussman2023a} on the blow-up of  null infinity---these turn out to be robust enough to bypass the problematic thresholds and provide an alternative to our $\Psiscq^{m,k,\ell}$ calculus, advantageous in terms of allowing for weaker assumptions on the spacetime metric. 
 
The second issue in the non self-adjoint case is proving the invertibility of $P-\lambda$  for  large $\Im \lambda$, needed to conclude Theorem \ref{thm1} by analytic Fredholm theory. One way pursued in this paper is to show large $\Im \lambda$ versions of the  $\Psiscq^{m,k,\ell}$  based estimates through  adapting the underlying positive commutator arguments, with decay in $\Im \lambda$ traded for regularity. This has however the disadvantage of not accounting for large  $\Im \lambda$  elliptic estimates: these are needed to complete the Fredholm estimates in the resolved setting,  but they require extra arguments. A standard approach would be to replace by $\lambda$ by $\lambda/h^2$, and derive semi-classical versions of the estimate (see Vasy--Zworski \cite{Vasy2000} for the asymptotically Euclidean case using the $\Psi_\sc^{m,\ell}$ calculus), with the requirement    that we need small $h$ estimates on  \emph{non semi-classical} spaces: in fact to prove Theorem \ref{thm2} we need to show microlocal mapping properties of the resolvent that survive integration along an infinite complex contour. To address this systematically we introduce a blow-up of $h=0$ at fiber infinity in the spirit of the recent work of Vasy \cite{Vasy:Semiclassical-standard} on the relationship between the semiclassical and standard pseudodifferential algebras. Namely, we construct a mixed non semiclassical-semiclassical calculus $\Psi_{\qscl,\scl,\semi^2,\semi,\mathrm{cl}}^{m,k,\ell,p,q,r}$ by  introducing the semiclassical phase space into the classical
 one by blowing up fiber infinity in the parameter-dependent fiber compactified scattering cotangent bundle at $h=0$. This gives a fully microlocal framework using which we derive new Fredholm estimates  which have  \emph{all the desired features simultaneously}: they imply in particular estimates with  adjustable trade-off between behavior in $h$ and regularity/decay, as follows from the relationships $\Psi_{\qscl,\scl,\semi^2,\semi,\mathrm{cl}}^{m,k,\ell,p,q,r}$-based spaces with more standard weighted Sobolev spaces; this suffices to prove Theorem \ref{thm1}.
 
Theorem \ref{thm2} is then concluded largely following the Hadamard parametrix-based arguments in \cite{Dang2020} with necessary adjustments for vector bundles, provided that we have a sufficiently precise bound on the wavefront set of $(P-\lambda)^{-1}$ uniformly in $\lambda$, with decay along the integration contour. Here we obtain it directly from the radial and propagation estimates used to prove Theorem \ref{thm1}, thus bypassing the evolutionary parametrix construction in \cite{Dang2020} and removing the explicit global hyperbolicity assumption made there.
 
 \subsection{Plan of the paper} The paper is organized as follows. In \sec{s:propagation} we introduce the resolved $\Psiscq^{m,k,\ell}$ calculus and prove propagation of singularities estimates (Proposition \ref{pos2}) and radial estimates (Propositions \ref{radial3} and \ref{radial4}) in the associated resolved Sobolev spaces. These are then used in \sec{s:resolvent} to derive Fredholm estimates and microlocal resolvent estimates. Semi-classical arguments needed to conclude the large $\Im \lambda$ versions of the estimates are given in \sec{s:semicl} which introduces the resolved $\Psi_{\qscl,\scl,\semi^2,\semi,\mathrm{cl}}^{m,k,\ell,p,q,r}$ calculus estimates and discusses estimates in this setting  and relationships with non-semiclassical spaces. Finally, \sec{s:dirac} specializes to the case $P= - \D^2$ and proves the relationship between complex powers and local invariants.

 \section{Propagation estimates in resolved setting}\label{s:propagation}\init
 
  %Propagation estimates in the scattering setting are due to Melrose \cite{melrosered}. The generalization to variable weight orders presented here is due to Vasy \cite{vasygrenoble,vasyessential}, see \cite[\S\S2--3]{hassell} for a concise introduction, cf.~\cite[\S E.4]{DZ} .
% The scattering calculus in the model case $\overline{\rr^n}$ was earlier developed  among others by Shubin \cite{Shubin1978} and Parenti \cite{Parenti1972}. 

\subsection{Notation}\label{ss:notation} Let $M$ be a compact manifold with boundary, and let $g$ be a pseudo-Riemannian metric on its interior $M^\inti$. We denote by $L^2(M)$ the canonical $L^2$ space associated to the volume density $d\vol_g$ of $g$, i.e.~the squared $L^2(M)$ norm  is
\beq\label{eq:defL2}
\| u \|^2= \int_{M^\inti} \left| u(x) \right|^2d\vol_g.
\eeq

We use the notation $\cf(\M)$ for the space of smooth function on $\M$, meant in the usual sense of being smoothly extendible across the boundary $\p \M$. More generally, if $E\xrightarrow{\pi} M$ is a smooth vector bundle then $\cf(M;E)$ stands for the space of its smooth  sections. If $E$ is a Hermitian bundle (in the sense that the hermitian form on each fiber  is a scalar product, so in particular it is assumed to be positive definite), we denote by $L^2(M;E)$ the corresponding Hilbert space of  square-integrable sections, with norm given by the analogue of \eqref{eq:defL2} with $\module{\cdot}$ replaced by the fiberwise norm $\module{\cdot}_E$.
 
 Let $\bdf$ be a boundary-defining function of $\p \M$, i.e.~a function $\bdf\in\cf(\M)$ such that 
$\bdf>0$ on $M$, $\p \M=\{\bdf=0\}$, and $d\bdf\neq 0$ on $\p \M$. By the collar neighborhood theorem, there exists $W\supseteq\pM$, $\epsilon>0$ and a diffeomorphism $\phi:\clopen{0,\epsilon}\times \pM \to W$ such that $\bdf\circ \phi$ is the projection to the first component of $\clopen{0,\epsilon}\times \pM$. We  use notation proper of $\clopen{0,\epsilon}\times \pM$ and disregard $\phi$ in the notation when working close to the boundary, i.e.~in the collar neighborhood $W$.  In this sense, we can use  local coordinates of the form $(\bdf,y_1,\dots,y_{n-1})$, where  $(y_1,\dots,y_{n-1})$ are local coordinates on the boundary.

\subsection{Pseudo-Riemannian \texorpdfstring{$\sc$}{sc}-spaces}\label{ss:scmetrics} 
Let us recall that $\cV_\b(M)$ is the space of \emph{$\b$-vector fields} on $M$, i.e.~the space of vector fields in $\cf(M;TM)$ which are tangent to the boundary $\pM$.  The same definition applies more to the more general case when $M$ is replaced by a manifold with corners ($\b$-vector fields are then tangent to all boundary hypersurfaces); this will become useful when considering vector fields on compactified versions of phase space. The space of \emph{$\sc$-vector fields} is by definition $\cV_\sc(M)=\bdf \cV_\b(M)$.
More explicitly,  each $V\in \cf(\M;\be T\M)$ is  locally of the form
\beq\label{sctvf}
V=V_0(\bdf,y) \bdf^2\p_\bdf + \sum_{i=1}^{n-1} V_i(\bdf,y)\bdf \p_{y_j}, \ V_0,V_i \in \cf({U}), \ i=1,\dots,n-1
\eeq
on {a} chart neighborhood ${U}$ with local coordinates  $(\bdf,y_1,\dots,y_{n-1})$ as above.  

In Melrose's $\sc$-geometry \cite{melrosered} (or scattering geometry) the ${\rm sc}${-tangent bundle}  $\Tsc M$ is the  unique vector bundle over $\M$ such that $\cV_\sc(M)=\cf(M;\Tsc M)$.  The ${\rm sc}${-cotangent bundle} $\be T^*\M$ is  the dual bundle of $\be T\M$. Thus, in local coordinates $(\bdf,y_1,\dots,y_{n-1})$, the smooth sections of $\be T^*\M$ are $\cf(\M)$-generated by $(\bdf^{-2}d\bdf, \bdf^{-1} d y_1, \dots, \bdf^{-1} d y_{n-1})$, in the same way that smooth sections of $\be T\M$ are $\cf(\M)$-generated by $(\bdf^2 \p_\bdf, \bdf \p_{y_1}$, $\dots, \bdf \p_{y_{n-1}})$. 

\begin{definition} An \emph{$\sc$-metric} is a  non-degenerate smooth section of the fiberwise symmetrized tensor product bundle $\be T^*\M \otimes_{\rm s} \be T^*\M$. 
\end{definition}

Correspondingly,  $(\M,{g})$ is a \emph{pseudo-Riemannian $\sc$-space of signature $(k,n-k)$}   if  ${g}$ is a pseudo-Riemannian metric on $M^\inti$ of signature $(k,n-k)$,  which extends to an $\sc$-metric. Here, the convention is that $k$ denotes the number of ``pluses'', and  $(\M,{g})$ is a \emph{Lorentzian $\rm sc$-space} if  $k=1$.

In what follows we assume that $(\M,{g})$ is a pseudo-Riemannian $\rm sc$-space. The volume density of $(M,g)$, $d\vol_g$, extends then to an \emph{${\rm sc}$-density} on $\M$, i.e.~in local coordinates $(\bdf,y)$ it is of the form $\nu(\bdf,y)\left|\bdf^{-2} d\bdf\, \bdf^{-n+1}d y  \right|$ for some $\nu\in \cf(\M)$. For the sake of generalities on pseudo-differential operators discussed in this chapter  one could actually take any smooth manifold $M$ with boundary and equip it with an ${\rm sc}$-density to define $L^2(M)$.

\begin{example}\label{ex:mi} The standard example is  $M^\inti=\rr^n$, with $\M=\overline{\rr^n}$  the \emph{radial compactification of $\rr^n$}. Let us recall that $\overline{\rr^n}$  is by definition  the quotient of $\rr^n\sqcup\big(\clopen{0,1}_\bdf \times \sphere^{n-1}_y \big)$ by the  relation which identifies any  $x\in\rr^n\setminus\{0\}$ with the point $(\bdf,y)$, where $\bdf=r^{-1}$ and $(r,y)$ are the polar coordinates of $x$. The smooth structure near $\{\bdf=0\}$ is the natural one in $(\bdf,y)$ coordinates,  and then $\overline{\rr^n}$ is diffeomorphic to a closed ball.  The standard frame $(\p_{x_0},\dots,\p_{x_{n-1}})$ on $T\rr^n$ smoothly extends to $\be T^*\overline{\rr^n}$ (this is easily seen by writing $\p_{x_j}$ in terms of the $\sc$-vector fields $\p_r=-\bdf^2\p_\bdf$ and $\p_{y_{j}}$). Conversely, any $V\in \cV_{\rm sc}(M)$ is in the $\cf(\overline{\rr^n})$-span of $(\p_{x_0},\dots,\p_{x_{n-1}})$, i.e.~the coefficients  extend smoothly across $\{\bdf=0\}$ in $(\bdf,y)$ coordinates  on top of being smooth in $\rr^n$. Similarly, the flat metric $g^0=dx_0^2+ \cdots + dx_{k-1}^2- (dx_{k}^2+\cdots+dx_{n}^2)$ on $\rr^n$ extends to an $\rm sc$-metric on $\overline{\rr^n}$.
\end{example}

\subsection{The \texorpdfstring{$\rm sc$}{sc}-calculus}\label{ss:bi}

Following the approach in \cite{vasygrenoble,vasyessential,hassell}, the example of $\overline{\rr^n}$ plays for us the role of a reference model, suitable for formulating various definitions. Then in a second step,      these definitions are transplanted to the general case using the identification of a coordinate neighborhood of $\pM$ with a coordinate neighborhood of a point of
$\p\overline{\rr^n}$ (in analogy to how standard notions on manifolds without boundary are defined by identifying coordinate charts with open subsets of $\rr^n$). To that end one can use for instance  diffeomorphisms of the form
\beq\label{eq:diffeo}
M^\inti\supset U \ni (\bdf,y) \mapsto   \bdf^{-1} \psi(y)\in \rr^n,
\eeq
where $\psi$ is a diffeomorphism from a small open set of $\p M$ to an open set of the unit sphere  $\{ r=1\}\subset \rr^n$.

In particular, this gives a convenient way of introducing the class of \emph{scattering
pseudodifferential operators} $\Psisc^{m,\ell}(M)$, $m,\ell\in \rr$, by reduction to the $\overline{\RR^n}$ case, or equivalently to an appropriate uniform
structure in the interior. 

Namely, in the model case, using standard coordinates $(x,\xi)$ on $T^*\rr^n=\rr^n \times \rr^n$ and setting $\bra x \ket=(1+|x|^2)^\12$ and $\bra \xi \ket=(1+|\xi|^2)^\12$,  the class of \emph{$\rm sc$-symbols} (product-type symbols) of order $m,\ell$, denoted by $S^{m,\ell}(\Tsc^*\overline{\rr^n})$,  is the set of all $a\in\cf(T^*\rr^n)$ such that
  \beq\label{eq:estisymb}
  \forall\alpha,\beta\in\nn^{n}, \ \   \big| \p_x^\alpha   \p_\xi^\beta a(x,\xi)    \big|\leqslant  C_{\alpha\beta}   \bra x\ket^{\ell-|\alpha|}   \bra \xi\ket^{m-|\beta|}. 
 \eeq
 For general $\M$, the symbol class $S^{m,\ell}({\Tsc^*}M)$ is then defined by reduction to $\overline{\RR^n}$. Note that symbols in $S^{m,\ell}({\Tsc^*}M)$ are in general not smooth sections of ${\Tsc^*}M$, but can be seen as sections satisfying a weaker conormality property at infinity, see \eqref{eq:estisymb2} and the neighboring paragraph.

Next, the class  of \emph{$\sc$-pseudo-differential operators} $\Psi^{m,\ell}_{\rm sc}(M)$ is by definition
\beq\label{eq:defpsi}
\Psi^{m,\ell}_{\rm sc}(M) \defeq \Op \big( S^{m,\ell}(\Tsc^*M) \big) + \cW^{-\infty,-\infty}_{\rm sc}(M),
\eeq
where $\Op$ is a quantization map (defined using a partition of unity $\{\chi_i\}_i$ subordinate to a finite chart covering of $M$, and with  chart diffeomorphisms of the form  \eqref{eq:diffeo}  close to $\p\M$), and  $\cW^{-\infty,-\infty}_{\rm sc}(M)$ is a class of regularizing operators in the sense that their Schwartz kernels are smooth  and decrease rapidly (with all derivatives) at large separations. We refer the reader to, e.g., \cite{vasygrenoble} and \cite[\S 2]{Uhlmann2016} for an introduction, cf.~\cite{melrosered} for the original, more geometric description of the Schwartz kernels of scattering pseudo-differential operators.

 When discussing microlocalisation it is useful to compactify the fibers of $\be T^*\M$. The base manifold $\M$ having  a boundary already, the {fiberwise radial compactification} of $\be T^*\M$ yields a manifold with corners {(see \cite[\S6.4]{melrosered} for details)} denoted by $\co$. The structure of $\co$ as a manifold with corners  can be deduced from the model case $\overline{\rr^n}$, where
  $$
 {\overline{\be T^*\rr^n}}=\overline{\RR^n_x}\times\overline{\RR^n_\xi}.
  $$
is obtained by  radially compactifying the position and the momentum space (separately). The two boundary hypersurfaces are $\overline{\RR^n}\times
  \p \overline{\RR^n}$, called {\em fiber infinity}, and $\p\overline{\RR^n}\times
  \overline{\RR^n}$, called {\em base infinity}, and they intersect at the
  corner  $\p\overline{\RR^n}\times
  \p\overline{\RR^n}$.  In the general case  the manifold with corners $\co$ has again two boundary hypersurfaces:   fiber infinity is the boundary of the fiber compactification, denoted by $\fibinf$ (it is in fact the \emph{$\sc$-cosphere bundle} of $M$), and base infinity is $\basinf$.  We can consider $\bdf$ as boundary defining function of base infinity $\basinf$. As boundary defining  function of fiber infinity $\fibinf$ we  take $$
 \rho_\infty=\langle(\tau,\mu)\rangle^{-1}=(|(\tau,\mu)|^2+1)^{-1/2},
 $$ where $\module{\cdot}$ is the length with
 respect to an arbitrarily chosen Riemannian $\rm sc$-metric and $(\tau,\mu_1,\ldots,\mu_{n-1})$  are coordinates on the fibers of $\be T^*\M$, sc-dual to 
 $(\bdf,y_1,\ldots,y_{n-1})$, i.e.~sc-covectors are written as
 \beq\label{eq:defmu}
 \tau\,\frac{d\bdf}{\bdf^2}+\sum_{j=1}^{n-1}\mu_j\,\frac{dy_j}{\bdf}\defeq \tau\,\frac{d\bdf}{\bdf^2}+\mu \cdot \frac{dy}{\bdf}.
 \eeq
  The locus of microlocalisation is the boundary of $\co$,
  $$
  \p\co=\basinf\cup\fibinf,
  $$
 including the corner $\corner$.  From this point of view, the  symbol class $S^{m,\ell}({\Tsc^*}M)$ can be equivalently defined   as the set of all smooth sections $a$ of $T^*M^\inti$
 such that for all $N\in \nn$ and all $V_j\in \cV_{\b}(\co)$, $1\leq j \leq N$,
 \beq\label{eq:estisymb2}
  \rho^\ell  \rho^{m}_\infty V_1 \dots  V_N  a \in L^\infty(T^* M^\inti).
\eeq
In fact, the equivalence with the original definition  \eqref{eq:estisymb} is easily obtained by  writing  the $\b$-vector fields in coordinates  and appropriately  commuting derivatives and weights.  

The subclass of \emph{classical symbols} $S^{m,\ell}_{\rm cl}({\Tsc^*}M)$ (characterized by having a joint one-step polyhomogeneous expansion  in powers of $\bra x \ket$ and $\bra \xi \ket$ at infinity) can be invariantly defined as  $S^{m,\ell}_{\rm cl}({\Tsc^*}M)=\rho^{-\ell} \rho^{-m}_{\infty} \cf(\co)$. Note that this corresponds to requiring \eqref{eq:estisymb2} for \emph{all} smooth vector fields $V_j$ rather than merely for  those which are tangent to the boundary hypersurfaces  (in other words,   conormality  at $\p\co$ is replaced by the stronger property of smooth extendibility).  
 
 In the context of propagation estimates it is useful to introduce more general symbol classes with \emph{varying decay order}  $\ell\in S^{0,0}(\Tsc^*M)$. For this
 purpose one also needs to relax the type of  symbol estimates
 and allow for small power losses, which can be used to absorb logarithmic losses arising when taking derivatives and hitting $\ell$. Namely, for $\delta\in \open{0,\12}$  one introduces the class $S^{m,\ell}_{\delta}(\Tsc^*M)$ by reduction to symbols on $\rr^n\times \rr^n$ satisfying  the bound
 $$
   \forall\alpha,\beta\in\nn^{n}, \ \ \big|\p_x^\alpha \p_\xi^\beta a(x,\xi)\big|\leq C_{\alpha\beta}\langle
 x\rangle^{\ell(x,\xi)-|\alpha|+\delta(|\alpha|+|\beta|)}\langle \xi\rangle^{m-|\beta|+\delta(|\alpha|+|\beta|)}
 $$
 instead of \eqref{eq:estisymb}.  
The corresponding pseudo-differential class will still be denoted by $\Psi^{m,\ell}_\sc(M)$.  The most significant consequence of the presence of $\delta$ is that  for $A\in\Psisc^{m,\ell}(M)$ and $B\in\Psisc^{m',\ell'}(M)$, 
 $$
 [A,B]\in\Psisc^{m+m'-1+2\delta,\ell+\ell'-1+2\delta}(M)
 $$
 instead of being in $\Psisc^{m+m'-1,\ell+\ell'-1}(M)$. 
However, in our applications we can take $\delta$ arbitrarily close to $0$, so it will have little practical significance and  we will frequently disregard it in the notation.
 
  In the $\sc$-calculus, the \emph{principal symbol}  of $A\in\Psisc^{m,\ell}(M)$ is the equivalence class of the symbol of $A$  in $S^{m,\ell}(\Tsc^*M)/ S^{m-1,\ell-1}(\Tsc^*M)$, or in $$S^{m,\ell}_{\delta}(\Tsc^*M)/ S^{m-1+2\delta,\ell-1+2\delta}_{\delta}(\Tsc^*M)$$ if we make the dependence on $\delta$ explicit.  In the simplest case of $A\in \Psisc^{0,0}(M)$ classical, it is possible to identify the principal symbol with the restriction of $a\in \cf(\co)$ to $\p \co$. For  classical  $A\in \Psisc^{m,\ell}(M)$ of arbitrary order  there is also a natural identification of the principal symbol with a function on $\p\co$.
  
   %see for instance \eqref{eqpri} for the explicit formula for the principal symbol $p_z$ of $\square_g-z$. %Let us also recall that we have denoted by  $\Char_z$ the  \emph{characteristic set} of $\square_g-z$, i.e.~the closure of $p_z^{-1}(\{0\})$.
  
  The \emph{microsupport} $\wf'_\sc(A)$ of $A\in \Psi^{s,\ell}_\sc(M)$ is the complement of the set of points $q\in \p\co$ such that the (full) symbol of $A$ coincides in a neighborhood of $A$ with a symbol in $S^{-N,-L}(\Tsc^*M)$ for all $N,L\in\rr$. If $A$ is classical, then its \emph{elliptic set} is the  complement $\elll_\sc(A)=\p\co\setminus \Char_\sc(A)$ of the \emph{characteristic set} $\Char_\sc(A)$, defined as the closure of the zero set of the principal symbol.

 Now, for $m\geqslant 0$ and  $\ell\in \cf(\co )$, one can define the \emph{weighted Sobolev space of variable weight order} as follows:
  $$
  H^{m,\ell}_\sc(M)= \{ u \in L^2(M) \ | \ A u \in L^2(M)\},
  $$
  where $A\in\Psi^{m,\ell}_\sc(M)$ is a classical elliptic operator (i.e., $\elll_\sc(A)=\p\co$) which can be chosen arbitrarily. One can fix in particular an invertible $A$, and  the norm can be then defined as  $\| u\|_{m,\ell}= \|A u \|$ (different choices of $A$ give equivalent norms). For $m\leqslant  0$, $H^{m,\ell}_\sc(M)$ can be defined as the dual of $H^{-m,-\ell}_\sc(M)$. Note that with these conventions for $  H^{m,\ell}_\sc(M)$, higher $m$ means more regularity, and higher $\ell$ means more decay at $\p \M$. 
  Correspondingly, the infinite order Sobolev spaces
  $$
  H^{-\infty,-\infty}_\sc(M)\defeq  \textstyle\bigcup_{m,\ell} H^{m,\ell}_\sc(M), \quad   H^{\infty,\infty}_\sc(M)\defeq  \textstyle\bigcap_{m,\ell} H^{m,\ell}_\sc(M),
  $$
  are the natural generalizations of the space of tempered distributions on $\rr^n$, resp.~the space of Schwartz functions on $\rr^n$. In terms of the former,  for all $m,\ell\in\rr$ and for any elliptic $A\in\Psi^{m,\ell}_\sc(M)$ we simply have 
  $$
   H^{m,\ell}_\sc(M)= \big\{ u \in  H^{-\infty,-\infty}_\sc(M)  \ \big|\big. \ A u \in L^2(M)\big\}.
  $$

In the calculus it is important that we can compute principal symbols of commutators. If  $A\in\Psisc^{m,\ell}(M)$ and $B\in\Psisc^{m',\ell'}(M)$ with $a$, resp.~$b$, denoting the principal symbols of $A$,
resp.~$B$, the principal symbol of $i [A,B]$ is the Poisson bracket
$\{a,b\}$ obtained from the symplectic structure on  $T^*M^\inti$. In the ${\rr^n}$ case, it is  given by the familiar expression
$$
\{a,b\}=\sum_{j=1}^n
(\p_{x_j}a)(\p_{\xi_j}b)-(\p_{\xi_j} a)(\p_{x_j}b) \defeq H_a b,
$$
which defines the Hamilton vector field $H_a$. In coordinates $(\tau,\mu_1,\dots,\mu_{n-1})$ introduced above \eqref{eq:defmu}, a straightforward change of variables computation  gives
\begin{equation}\bea\label{eq:Ham-vf}
H_a =\bdf \big((\p_\tau
a)(\bdf\p_\bdf+\mu\cdot\p_\mu)-\big((\bdf\p_\bdf+\mu\cdot\p_\mu)a\big)\p_\tau
+(\p_{\mu}a)\cdot\p_{y}-(\p_{y}a)\cdot\p_{\mu}\big).
\eea\end{equation}
In view of the homogeneity in $\bdf$ and $\rho_\infty$, it is natural to introduce the rescaled Hamilton vector field
$$
\xoverline{H}_a\defeq  \bdf^{\ell-1} \rho_\infty^{m-1}H_a.
$$
 If $a$ is classical,  this extends to a smooth vector field  $\xoverline{H}_a\in \cV_\b(\co)$, and
 thus defines a flow on $\co$ (the {\em Hamilton flow}). For general $a\in S^{m,\ell}(\Tsc^*M)$, $\xoverline{H}_a$ has merely conormal coefficients (of order $(0,0)$) as a vector field tangent to the boundary $\p\co$.
 
\emph{Radial points} are  points of $\p\co$ at which  $\xoverline{H}_a$ vanishes {as
 a vector in $T\co$}. Note that this does not necessarily mean that $\xoverline{H}_a$ vanishes there as a
 $\b$-vector field; in particular, non-zero $\cf(\co)$-linear combinations of $\bdf\p_\bdf$  and $\rho_\infty \p_{\rho_\infty}$ are allowed.
 
 \subsection{Calculus in resolved setting} An important aspect of the work here is a resolution of the compactified scattering
 phase space $\co$.  In fact, in the context of Fredholm estimates for operators of the form $P-\lambda$ with $P\in \Psi^{2,0}_\sc(M)$ and $\lambda\in \cc$, it will turn out that  this resolution properly
 reflects the transition of the role of $\Im \lambda$ from being
 principal at $\basinf$, to being
 sub-subprincipal at $\fibinf$. 
 
 Namely, we consider the blow-up of the corner $\corner$, in order to obtain the resolved
 phase space
 $$
  \cores=[\co; \corner],
 $$ 
 equipped with a blow-down map $\beta: [\co; \corner]\to\corner$. Thus, the corner $\corner$ is replaced by a boundary hypersurface of    $\cores$ called the \emph{front face} and henceforth denoted by $\ff$. Fiber infinity and base infinity  lift to two boundary hypersurfaces denoted respectively by $\fibi$ and $\basi$ (see Figure \ref{fig:blowup}).

  \begin{figure}
  \tikzset{every picture/.style={line width=0.75pt}} %set default line width to 0.75pt        

\tikzset{every picture/.style={line width=0.75pt}} %set default line width to 0.75pt        

\begin{tikzpicture}[x=0.75pt,y=0.75pt,yscale=-0.7,xscale=0.7]
%uncomment if require: \path (0,300); %set diagram left start at 0, and has height of 300

%Straight Lines [id:da6097875778520419] 
\draw [line width=0.75]    (80,66.67) -- (176.75,66.67) ;
%Shape: Arc [id:dp1047563825112292] 
\draw  [draw opacity=0][line width=0.75]  (264.7,153.33) .. controls (264.7,153.33) and (264.7,153.33) .. (264.7,153.33) .. controls (216.12,153.33) and (176.75,114.53) .. (176.75,66.67) -- (264.7,66.67) -- cycle ; \draw  [line width=0.75]  (264.7,153.33) .. controls (264.7,153.33) and (264.7,153.33) .. (264.7,153.33) .. controls (216.12,153.33) and (176.75,114.53) .. (176.75,66.67) ;  
%Straight Lines [id:da09456462734577675] 
\draw [line width=0.75]    (264.7,153.33) -- (264.7,240) ;
%Straight Lines [id:da39733499435218045] 
\draw    (307.06,141) -- (378,141) ;
\draw [shift={(380,141)}, rotate = 180] [fill={rgb, 255:red, 0; green, 0; blue, 0 }  ][line width=0.08]  [draw opacity=0] (12,-3) -- (0,0) -- (12,3) -- cycle    ;
%Straight Lines [id:da9380308862795308] 
\draw [line width=0.75]    (410,80) -- (571,80) ;
%Straight Lines [id:da19626266483799037] 
\draw [line width=0.75]    (571,80) -- (570,240) ;

% Text Node
\draw (116.91,48.44) node [anchor=north west][inner sep=0.75pt]    {$\fibi$};
% Text Node
\draw (200.32,111.07) node [anchor=north west][inner sep=0.75pt]    {$\ff$};
% Text Node
\draw (263.53,184.25) node [anchor=north west][inner sep=0.75pt]    {$\basi$};
% Text Node
\draw (459,61.4) node [anchor=north west][inner sep=0.75pt]    {$\fibinf$};
% Text Node
\draw (331,122.4) node [anchor=north west][inner sep=0.75pt]    {$\beta$};
% Text Node
\draw (572,152.4) node [anchor=north west][inner sep=0.75pt]    {$\basinf$};
% Text Node
\draw (556,62.4) node [anchor=north west][inner sep=0.75pt]    {$\corner$};

\end{tikzpicture}
  \caption{\label{fig:blowup} Blow-up $\,\cores=[\co; \corner]$ of the corner $\corner$ of $\co$. The $\fibi$-face is the lift of fiber infinity $\fibinf=\{ \rho_\infty = 0\}$, and the $\basi$-face is the lift of base infinity $\basinf=\{\bdf =0 \}$.}
  \end{figure}
 
 As an illustration, in the $\RR^n$-setting the above definition means a blow up of the corner
 $\p\overline{\RR^n}\times\p\overline{\RR^n}$, which yields the resolved space
 $
 [\overline{\RR^n}\times\overline{\RR^n}; \p\overline{\RR^n}\times\p\overline{\RR^n}].
 $
 Near the corner, $\overline{\RR^n}\times\overline{\RR^n}$ has the
 structure
 $$
 \clopen{0,1}_\rho\times \clopen{0,1}_{\rho_\infty}\times
 \p\overline{\RR^n}\times\p\overline{\RR^n}.
 $$
 The last two factors
 are unaffected by the blowup. On the other hand, a neighborhood of the
 corner in the quadrant
 $ \clopen{0,1}_\rho\times \clopen{0,1}_{\rho_\infty}$ is replaced by its polar coordinate
 version, $\clopen{0,1}_R\times \clopen{0,\pi/2}_\theta$.  In practice, it is simpler
 to work with projective coordinates and use two charts. Thus, where
 $C\rho>\rho_\infty$, i.e.\ near the lift of fiber infinity, where $x$ is
 relatively large, one has coordinates
 $$
 (\rho,\rho_\infty/\rho)\in\clopen{0,1}\times\clopen{0,C},
 $$
 while in $C\rho_\infty>\rho$,
 i.e.\ where $\rho_\infty$ is relatively large, which is near the lift
 of the boundary fibers of the cotangent bundle, one has coordinates
 $$
 (\rho_\infty,\rho/\rho_\infty)\in\clopen{0,1}\times\clopen{0,C}.
 $$
 
 Moving on to the general case,  examples of boundary defining functions of respectively $\fibi$, $\ff$, $\basi$,  are respectively:
 $$
 \bea
\rho_\fibi&:=(1+\rho/\rho_\infty)^{-1} =     \frac{\rho_\infty}{\rho+\rho_\infty}, \\  \rho_\ff&:=\rho+\rho_\infty, \\   \rho_\basi&:=(1+\rho_\infty/\rho)^{-1} = \frac{\rho}{\rho+\rho_\infty}.
\eea
 $$

 \begin{definition} For $m,k,\ell\in \rr$, the resolved symbol class $S^{m,k,\ell}({\Tsc^*}M)$ is the set of all smooth sections $a$ of $T^*M^\inti$
  such that for all $N\in \nn$ and all $V_j\in \cV_{\b}(\cores)$, $1\leq j \leq N$,
   \beq\label{eq:estisymb3}
   \rho_\fibi^{m}\, \rho_\ff^k\, \rho_\basi^\ell\, V_1 \dots  V_N  a \in L^\infty(T^* M^\inti).
   %   (1+\rho/\rho_\infty)^{-m} (\rho+\rho_\infty)^k  (1+\rho_\infty/\rho)^{-\ell} V_1 \dots  V_N  a \in L^\infty(T^* M^\inti).
  \eeq
 \end{definition}
 
Note that this is equivalent to saying that we require \eqref{eq:estisymb3} for all $V_j\in \cV_{\b}(\co)$.

 In analogy to the scattering calculus, \emph{classical symbols} in $S^{m,k,\ell}({\Tsc^*}M)$ are by definition elements of $\rho_{\fibi}^{-m} \rho_\ff^{-k} \rho_\basi^{-\ell} \cf(\cores)$.

By quantizing symbols in $S^{m,k,\ell}({\Tsc^*}M)$ we obtain a new pseudo-differential $*$-algebra.

 \begin{definition} For $m,k,\ell\in \rr$, the resolved scattering pseudo-differential class $\Psiscq^{m,k,\ell}(M)$ is obtained by quantizing elements of $S^{m,k,\ell}({\Tsc^*}M)$, i.e.
 \beq\label{eq:defpsi2}
\Psiscq^{m,k,\ell}(M) \defeq \Op \big( S^{m,k,\ell}(\Tsc^* M) \big) + \cW^{-\infty,-\infty}_{\rm sc}(M),
 \eeq
 where $\cW^{-\infty,-\infty}_{\rm sc}(M)$ is the same ideal  of smoothing operators as in  \eqref{eq:defpsi}.
 \end{definition}

 \begin{remark}\label{rem:qsc} The label ``$\scq$'' stands for ``scattering, quadratic scattering'', which is a way of indicating that microlocally in the interior of the new front face $\ff$, operators in $\Psiscq^{m,k,\ell}(M)$ are \emph{quadratic scattering pseudo-differential operators} as introduced by Wunsch \cite{Wunsch1999a} (this also is the same as \emph{isotropic
  operators}). This can be seen by observing that in the region of
 $\overline{\Tsc^*}M$ where say $|\tau|>c|\mu|$, $c>0$, coordinates
 on $\overline{\Tsc^*}M$ are given by 
 $x,y,|\tau|^{-1},\mu/|\tau|$, hence the front face projective variable
 (where $x$ is relatively large, i.e.\ near the lift of fiber infinity)
 becomes
 $|\tau|^{-1}/x=1/x|\tau|$.
 \end{remark}
 
The first crucial observation is that all operators in $\Psisc^{m,\ell}(M)$ are also pseudo-differential operators in the resolved class  $\Psiscq^{m,k,\ell}(M)$ for all $k\geqslant m+\ell$.  In fact, we can write
 \beq\label{eq:rororo}
   \rho_\fibi^{m}\, \rho_\ff^k\, \rho_\basi^\ell= (1+\rho/\rho_\infty)^{-m} (\rho+\rho_\infty)^k  (1+\rho_\infty/\rho)^{-\ell}= \rho^m_\infty \rho^\ell (\rho+\rho_\infty)^{k-m-\ell}.
 \eeq
 Furthermore, $\b$-vector fields on $\co$ lift to $\b$-vector fields on $\cores$. In consequence, by comparing  the definitions  \eqref{eq:estisymb2} and \eqref{eq:estisymb3} of the respective symbol classes we get immediately the inclusion
 $$
S^{m,\ell}({\Tsc^*}M) \subset  S^{m,m+\ell,\ell}({\Tsc^*}M),
 $$
 and consequently $\Psisc^{m,\ell}(M) \subset \Psiscq^{m,m+\ell,\ell}(M)$ on the  level of operators.  In the opposite direction,   from \eqref{eq:rororo} we can see that away from the $\fibi$-face (i.e., where $\rho_\infty$ is relatively large), $S^{m,k,\ell}({\Tsc^*}M)$ coincides with $S^{k-\ell,\ell}({\Tsc^*}M)$. Similarly, away from $\basi$ (i.e., where $x$ is relatively large), $S^{m,k,\ell}({\Tsc^*}M)$ coincides with $S^{m,k-m}({\Tsc^*}M)$. In other words, $\Psiscq^{m,k,\ell}(M)$ decomposes microlocally into $\Psisc^{k-\ell,\ell}(M)$  away from the $\fibi$-face and  $\Psisc^{m,k-m}(M)$  away from the $\basi$-face.   In particular, if $k=m+\ell$, we conclude that
  $$
 \Psisc^{m,\ell}(M)=\Psiscq^{m,m+\ell,\ell}(M).
 $$
 
In analogy to the scattering calculus, the \emph{microsupport} $\wf'_\scq(A)\subset \cores$ of $A\in \Psi^{m,k,\ell}_\sc(M)$ is the complement of the set of points $q\in \p\cores$ at which the symbol of $A$ coincides in a neighborhood of $A$ with a symbol in {$S^{-N,-K,-L}(\Tsc^*M)$} for all $N,K,L\in\rr$ (or equivalently, in $S^{-N,-L}(\Tsc^*M)$ for all $N,L\in \rr$). The definition applies to $\Psisc^{m,\ell}(M)=\Psiscq^{m,m+\ell,\ell}(M)$ in particular.  The important properties are that $\WF'_{\scq}(A)=\emptyset$ implies $A\in\cW^{-\infty,-\infty}(M)$ (which is immediate from the definition) and that composition, at this point for $\sc$-pseudo-differential operators, is microlocal, namely:
 $$
 \WF'_{\scq}(A_1 A_2)\subset\WF'_{\scq}(A_1)\cap\WF'_{\scq}(A_2)
 $$ 
 for $A_i\in\Psisc^{m_i,\ell_i}(M)$ ($i=1,2$), as follows immediately from the composition rule in the scattering algebra.

 Then, an equivalent way  of phrasing the definition of $\Psiscq^{m,k,\ell}(M)$  is that an operator $A$ acting on, say, Schwartz functions is in $\Psiscq^{m,k,\ell}(M)$ if and only if  $A\in\Psisc^{\max(m,k-\ell),\ell}(M)$ (or indeed $A\in\Psisc^{M,K,L}(M)$ for some $M,K,L$) and for all $Q_{\fibi},Q_{\basi}\in\Psisc^{0,0}(M)=\Psiscq^{0,0,0}(M)$ with $\WF'_{\scq}(Q_{\fibi})\cap\basi=\emptyset$, $\WF'_{\scq}(Q_{\basi})\cap\fibi=\emptyset$,
 \beq\label{eq:QA}
 Q_{\fibi}A\in\Psisc^{m,k-m}(M) \mbox{ and  } Q_{\basi}A\in\Psisc^{k-\ell,\ell}(M), 
 \eeq
 where the compositions are well-defined in the  $\sc$-algebra. The latter is equivalent to 
 \beq\label{eq:AQ}
  AQ_{\fibi}\in\Psisc^{m,k-m}(M) \mbox{ and  } AQ_{\basi}\in\Psisc^{k-\ell,\ell}(M),
 \eeq
 and it suffices to check \eqref{eq:QA} or \eqref{eq:AQ} for any fixed $Q_{\basi},Q_{\fibi}$ with $Q_{\basi}+Q_{\fibi}=\one$  and  $\WF'_{\scq}(Q_{\fibi})\cap\basi=\emptyset$, $\WF'_{\scq}(Q_{\basi})\cap\fibi=\emptyset$. For instance to see that \eqref{eq:QA} implies {eq:AQ}, if $Q_{\fibi}$ is as above, choose $Q'_{\fibi}$ to have similar properties but in addition $\WF'_{\scq}(I-Q'_{\fibi})\cap\WF'_{\scq}(Q_{\fibi})=\emptyset$, and then the identity $AQ_{\fibi}=(Q'_{\fibi}A)Q_{\fibi}+(I-Q'_{\fibi})AQ_{\fibi}$ shows the claim for the second term is in $\cW_\sc^{-\infty,-\infty}(M)$ by wave front set considerations (which only use $\WF'_{\scq}$ properties of $\sc$-operators).

In the resolved calculus, the \emph{principal symbol} $\sigma_{\scq}(A)$ of  $A\in \Psiscq^{m,k,\ell}(M)$ is the equivalence class of its symbol in
 $$
S^{m,k,\ell}(\Tsc^*M)/S^{m-1,k-2,\ell-1}(\Tsc^*M),
 $$
in particular we quotient out symbols which are  {\em two} orders better at the front face $\ff$. This simply
 corresponds to the scattering algebra in $\RR^n$ gaining both a factor
 of $\langle z\rangle^{-1}$ and $\langle\zeta\rangle^{-1}$ in
 asymptotic expansions, and both
 vanish at $\ff$. For this reason the symbol short exact sequence takes the form
 $$
 0\to\Psiscq^{m-1,k-2,\ell-1}(M)\to\Psiscq^{m,k,\ell}(M)\to
 S^{m,k,\ell}(\Tsc^*M)/S^{m-1,k-2,\ell-1}(\Tsc^*M)\to 0,
 $$
where the second arrow is the embedding  of $\Psiscq^{m-1,k-2,\ell-1}(M)$ in $\Psiscq^{m,k,\ell}(M)$ and the third arrow is the principal symbol map $\sigma_\scq$.

Correspondingly, we  have a notion of elliptic set on the
 boundary of the resolved phase space:
 $$
\Ell_\scq(A)\subset \cores.
 $$
 If $A$ is classical, then its \emph{elliptic set} is the  complement $\elll_\scq(A)=\p\cores\setminus \Char_\scq(A)$ of the \emph{characteristic set} $\Char_\scq(A)$, defined as the closure of the zero set of the principal symbol $\sigma_\scq(A)$; this then generalizes to non-classical symbols by appealing to a classical representative.  Finally, $A$ is said to be \emph{elliptic} in $\Psi^{m,k,\ell}_\scq(M)$ if $\elll_\scq(A)=\p\cores$ (or equivalently, if $\Char_\scq(A)=\emptyset$).  Since we are actually interested in proving propagation estimates along the Hamilton flow in the $\rm sc$-sense, it is convenient to map the microsupport and elliptic set back to $\p\co$  via the blow-down map $\beta$ and introduce the following short-hand notation.
 
 \begin{definition} For $A\in  \Psi^{m,k,\ell}_\scq(M)$, we denote $$
 \wf'_\sc(A):=\beta(\wf'_\scq(A)), \quad  
 \elll_\sc(A)\defeq \beta (\elll_\scq(A)).$$
 \end{definition}
 
 This is  consistent with the notation already in use, i.e., if $A\in \Psisc^{m,\ell}(M)$ then the the sets $\wf'_\sc(A)$ and $\elll_\sc(A)$ defined in the $\sc$-calculus sense coincide with the ones defined above.
 
 There is a  notion of weighted Sobolev spaces, defined similarly to the $\rm sc$-setting.  Concretely, for $s,r,l\in\rr$, we define
   $$
   H^{s,r,l}_\scq(M)= \{ u \in H_\scq^{-\infty,-\infty,-\infty}(M) \st A u \in L^2(M)\},
   $$
   where $A\in\Psiscq^{s,r,l}(M)$ is some fixed invertible classical elliptic operator and 
   $$
   H_\scq^{-\infty,-\infty,-\infty}(M) = \textstyle\bigcup_{m,k,\ell} \Psiscq^{m,k,\ell}(M) L^2(M). 
   $$
    The norm of $u\in   H^{s,r,l}_\scq(M)$  is by definition $\| u\|_{s,r,l}= \|A u \|$. The relationships between the $\sc$-calculus and its resolved version imply that microlocally away from the $\fibi$-face, $H^{s,r,l}_\scq(M)$ is the same as $H^{r-l,l}_\sc(M)$, and microlocally away from  the $\basi$-face, $H^{s,r,l}_\scq(M)$ is $H^{s,r-s}_\sc(M)$.

 The resolved pseudo-differential class has good composition and mapping properties:
     
 \begin{proposition}\label{respdo} For all $m_1,k_1,\ell_1\in\rr$ and all $m_2,k_2,\ell_2\in\rr$,
 $$
 \Psiscq^{m_1,k_1,\ell_1}(M)\circ   \Psiscq^{m_2,k_2,\ell_2}(M) \subset  \Psiscq^{m_1+m_2,k_1+k_2,\ell_1+\ell_2}(M),
 $$
 and the principal symbol map $\sigma_\scq$ is a filtered $*$-algebra homomorphism.
 Furthermore, for all  $m,k,\ell\in \rr$ and  all $s,r,l\in \rr$, 
 $$
\Psiscq^{m,k,\ell}(M)\subset B( H_\scq^{s,r,l}(M), H_\scq^{s-m,r-k,l-\ell}(M)).
 $$ 
 \end{proposition}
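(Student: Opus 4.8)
The plan is to deduce both assertions from the already available scattering calculus $\Psisc$, by localising to the two overlapping regions in which the resolved calculus reduces to $\Psisc$ with shifted orders: microlocally away from $\fibi$ one has $\Psiscq^{m,k,\ell}(M)=\Psisc^{k-\ell,\ell}(M)$, microlocally away from $\basi$ one has $\Psiscq^{m,k,\ell}(M)=\Psisc^{m,k-m}(M)$, and $\cores$ is covered by these two regions, the two descriptions agreeing over the overlap (the interior of $\ff$, where both reduce to the ordinary isotropic calculus). Throughout, fix a partition of unity $Q_\fibi+Q_\basi=\one$ with $Q_\fibi,Q_\basi\in\Psisc^{0,0}(M)=\Psiscq^{0,0,0}(M)$, $\WF'_\scq(Q_\fibi)\cap\basi=\emptyset$ and $\WF'_\scq(Q_\basi)\cap\fibi=\emptyset$, as in the paragraph preceding \eqref{eq:QA}; all error terms produced below will lie in $\cW^{-\infty,-\infty}_\sc(M)$ by microsupport considerations (using microlocality of the $\sc$-composition), which is harmless since $\cW^{-\infty,-\infty}_\sc(M)\subset\Psiscq^{m,k,\ell}(M)$ for every $m,k,\ell$ and maps any $H_\scq$-space into $H_\scq^{\infty,\infty,\infty}(M)$.

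For the composition, let $A_i\in\Psiscq^{m_i,k_i,\ell_i}(M)$. By the equivalent characterisation each $A_i$ belongs to $\Psisc^{\max(m_i,k_i-\ell_i),\ell_i}(M)$, so $A_1A_2$ lies in the $\sc$-calculus of the summed orders by the $\sc$-composition rule, which supplies the background membership. It then remains to verify \eqref{eq:QA} for $A_1A_2$ with orders $(m_1{+}m_2,k_1{+}k_2,\ell_1{+}\ell_2)$. Near $\fibi$, pick $\tilde Q_\fibi\in\Psisc^{0,0}(M)$ with $\WF'_\scq(\tilde Q_\fibi)\cap\basi=\emptyset$ and $\WF'_\scq(\one-\tilde Q_\fibi)\cap\WF'_\scq(Q_\fibi)=\emptyset$; then $Q_\fibi A_1(\one-\tilde Q_\fibi)\in\cW^{-\infty,-\infty}_\sc(M)$, so $Q_\fibi A_1A_2\equiv(Q_\fibi A_1)(\tilde Q_\fibi A_2)$ modulo $\cW^{-\infty,-\infty}_\sc(M)$, and since $Q_\fibi A_1\in\Psisc^{m_1,k_1-m_1}(M)$, $\tilde Q_\fibi A_2\in\Psisc^{m_2,k_2-m_2}(M)$ by \eqref{eq:QA}, their product is in $\Psisc^{m_1+m_2,(k_1+k_2)-(m_1+m_2)}(M)$, as wanted. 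The argument near $\basi$ is identical with $(m_i,k_i-m_i)$ replaced by $(k_i-\ell_i,\ell_i)$, giving $Q_\basi A_1A_2\in\Psisc^{(k_1+k_2)-(\ell_1+\ell_2),\ell_1+\ell_2}(M)$; hence $A_1A_2\in\Psiscq^{m_1+m_2,k_1+k_2,\ell_1+\ell_2}(M)$. For the principal symbol, in each region $\sigma_\scq$ coincides with the $\sc$-principal symbol after the corresponding order shift, and the resolved reduction $(m{-}1,k{-}2,\ell{-}1)$ — two orders better at $\ff$ — becomes exactly the $\sc$-reduction $(-1,-1)$ of $\Psisc^{k-\ell,\ell}$ away from $\fibi$ and of $\Psisc^{m,k-m}$ away from $\basi$; thus multiplicativity of $\sigma_\scq$ and $\sigma_\scq(A^*)=\overline{\sigma_\scq(A)}$ are inherited region by region from the $\sc$-calculus, and consistency over the overlap makes $\sigma_\scq$ a filtered $*$-algebra homomorphism globally.

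For the mapping property, note first that $\Psiscq^{0,0,0}(M)=\Psisc^{0,0}(M)$ is bounded on $L^2(M)$ by the $\sc$-calculus. For general orders, let $B\in\Psiscq^{s,r,l}(M)$ and $C\in\Psiscq^{s-m,r-k,l-\ell}(M)$ be the fixed invertible classical elliptic operators used to define the two Sobolev norms. Using the symbol exact sequence and the multiplicativity of $\sigma_\scq$ just established, the standard iterative construction together with asymptotic summation in the resolved symbol classes $S^{m,k,\ell}(\Tsc^*M)$ yields an elliptic parametrix for $B$ in $\Psiscq^{-s,-r,-l}(M)$ with error in $\cW^{-\infty,-\infty}_\sc(M)$; since $B$ is invertible this forces $B^{-1}\in\Psiscq^{-s,-r,-l}(M)$. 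Then for $A\in\Psiscq^{m,k,\ell}(M)$ the composition rule gives $CAB^{-1}\in\Psiscq^{0,0,0}(M)$, so for $u\in H_\scq^{s,r,l}(M)$ we get $Au\in H_\scq^{-\infty,-\infty,-\infty}(M)$ and $C(Au)=(CAB^{-1})(Bu)\in L^2(M)$ with $\|Au\|_{s-m,r-k,l-\ell}=\|C(Au)\|\le\|CAB^{-1}\|_{B(L^2)}\,\|u\|_{s,r,l}$, which is the assertion. (Alternatively, the mapping property follows directly from the region decomposition, writing $A=\tilde Q_\fibi A+\tilde Q_\basi A$ modulo $\cW^{-\infty,-\infty}_\sc(M)$ and applying the $\sc$-mapping property in each region to the identifications of $H_\scq$ with weighted $H_\sc$-spaces.)

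The only genuinely delicate point is keeping the order bookkeeping consistent: one must check that the two region-wise descriptions of $\Psiscq^{m,k,\ell}(M)$, of $\sigma_\scq$, and of $H_\scq^{s,r,l}(M)$ glue along the interior of $\ff$, and in particular that the double order at $\ff$ in the resolved symbol sequence corresponds to the ordinary single order at base/fiber infinity after the reductions above. Granting this, everything is a faithful transcription of the scattering calculus; the alternative of estimating $a_1\#a_2$ directly against the weights $\rho_\fibi^{m_1+m_2}\rho_\ff^{k_1+k_2}\rho_\basi^{\ell_1+\ell_2}$ with $\cV_\b(\cores)$ works as well, at the cost of the usual oscillatory-integral remainder estimates that the localisation argument avoids.
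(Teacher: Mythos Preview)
Your proof is correct and follows essentially the same approach as the paper: both reduce the composition statement to the scattering calculus via the microlocalizer characterisation \eqref{eq:QA}, inserting an auxiliary cutoff $\tilde Q_\fibi$ (the paper's $Q'_\fibi$) so that $Q_\fibi A_1A_2=(Q_\fibi A_1)(\tilde Q_\fibi A_2)$ modulo $\cW^{-\infty,-\infty}_\sc$ and then applying the $\sc$-composition rule. Your treatment is in fact more detailed than the paper's brief sketch, in particular you spell out the symbol-map homomorphism and the mapping property (via the standard parametrix/conjugation argument), which the paper leaves implicit.
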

 \begin{proof} This follows from the characterization using microlocalizers $Q_{\fibi}$ and $Q_{\basi}$, see  \eqref{eq:QA},   since $\Psisc(M)$ itself is a filtered $*$-algebra. Indeed, for instance, to show that $$
      Q_{\fibi}AB\in\Psisc^{m_1+m_2,k_1+k_2-m_1-m_2}(M)
      $$
      for $A\in\Psiscq^{m_1,k_1,\ell_1}(M)$ and $B\in\Psiscq^{m_2,k_2,\ell_2}(M)$, let $Q'_{\fibi}$ be such that $\WF'_{\scq}(\one-Q'_{\fibi})\cap\WF'_{\scq}(Q_{\fibi})=\emptyset$; then
      $$
      Q_{\fibi}AB=(Q_{\fibi}A)(Q'_{\fibi}B)+(Q_{\fibi}A(\one-Q'_{\fibi}))B
      $$
      is in the stated space for the last term is in $\cW_\sc^{-\infty,-\infty}$ by $\WF'_{\scq}$ considerations.
 \end{proof}

In particular, operators in $\Psisc^{0,0}(M)=\Psiscq^{0,0,0}(M)$ are bounded on $H_\scq^{s,r,l}(M)$ for all $s,r,l\in \rr$. Another direct consequence of the first part of Proposition \ref{respdo} is that taking commutators increases the a priori $\ff$-order by $2$:
 $$
[\Psiscq^{m_1,k_1,\ell_1}(M), \Psiscq^{m_2,k_2,\ell_2}(M)] \subset \Psiscq^{m_1+m_2-1,k_1+k_2-2,\ell_1+\ell_2-1}(M).
 $$

 \subsection{Propagation and radial estimates in  \texorpdfstring{$\sc$}{sc}-calculus}\label{ss:pe} The estimates proved in later sections can be seen as refinements of the propagation of singularities theorem and of  radial estimates in the $\sc$-setting, which we now recall following \cite{vasygrenoble,vasyessential}.

 Let $P\in \Psi^{2,0}_\sc(M)$ be classical, and suppose
 \beq\label{eq:ImPsp}
P-P^*\in  \Psi^{1,-1-\delta}_\sc(M).
\eeq
for some $\delta>0$. In particular, the principal symbol $p$ of $P$ is real.  
For the moment we do not need to assume that $P$ is a wave or Klein--Gordon operator. To keep the notation as simple as possible we only consider the scalar case, and comment on the generalization to the vector bundle case in a later section.
 
  We are interested in uniform estimates for the family of operators $P-\lambda\in \Psi^{2,0}_\sc(M)$, where $\lambda\in \cc$ is in a closed (not necessarily bounded)   subset
  $$
  \complex_{\varepsilon}^{+}\subset \{ z \in \cc  \st \Im z \geq \varepsilon \}
$$
   of the upper half-plane, separated from the real line by at least $\varepsilon>0$. The presentation in \cite{vasygrenoble} allows for  orders different   than $(2,0)$ and for more general imaginary part, here however we do not discuss these possibilities for the sake of brevity. Note that in our particular situation, $\Im \lambda>0$, and of course $\lambda\in \Psi^{0,0}_\sc(M)$ as a multiple of the identity. The principal symbol of $P-\lambda$ at base infinity $\basinf$ is therefore $p-\lambda$, with $p$ real and $\lambda\neq 0$ imaginary, so $P-\lambda$ is elliptic at $\basinf$ (away from the corner). On the other hand at fiber infinity, $\lambda$ does not enter the principal symbol, so the characteristic set is 
   $$
   \Char_\sc(P-\lambda)=\Char_\sc(P)\cap \fibinf \eqdef \Char_0.
   $$

In this setting, we first state the propagation of singularities theorem, due to Melrose \cite{melrosered} for  fixed $\lambda$,  and to Vasy \cite{vasygrenoble,vasyessential} in full generality, building on Hörmander's classical theorem.  Below, we write $q\sim q'$ if $q$ and $q'$ are connected by a bicharacteristic in $\Char_0$, and we denote {the closed bicharacteristic segment from $q$ to $q'$} by $\gamma_{q\sim q'}\subset \Char_0$. The notation $q\succ q'$ means that $q\sim q'$  and $q$ comes after $q'$ along the flow. Capital letters are used to denote weighted Sobolev orders which can be taken arbitrarily negative, so terms like $\err{u}$ on the right hand side of an inequality are interpreted as microlocally irrelevant errors.

   \bep[Propagation of singularities]\label{pos} Let $s,\ell\in\cf(\co)$ be non-decreasing along the Hamilton flow. Let $B_1,B_2,A_0\in\Psi^{0,0}_{\sc}(M)$ be such that $\wf'_\sc(B_1)\subset \elll_\sc(A_0)$ and  the following control condition is satisfied:
   \beq\label{eq:forward}
   \bea
   &\forall q\in \wf'_\sc(B_1)\cap \Char_0, \\ &\exists\, q'\in\elll_\sc(B_2) \mbox{ s.t. } q\succ q' \mbox{ and } \gamma_{q\sim q'} \subset \elll_\sc(A_0).  
  \eea
   \eeq
Then for all $u\in H^{\SL}_\sc(M)$ and  $\lambda \in \complex_{\varepsilon}^{+}$,
\beq\label{eq:thepos}
   \norm{B_1 u}_{s,\ell} + \module{\Im \lambda}^{\12}    \norm{B_1 u}_{s-\12,\ell+\12}  \lesssim    \norm{B_2 u}_{s,\ell} +  \| A_0 (P-\lambda) u \|_{s-1,\ell+1}+ \err{u}.
   \eeq
   \eep
   
More precisely, by this we mean that there exists $C>0$ such that for all $u\in H^{\SL}_\sc(M)$, if $B_2 u \in\Hsc{s,\ell}$ and $\{A_0(P-\lambda)u\}_{\lambda\in \complex_{\varepsilon}^{+}}$ is bounded in $H^{s-1,\ell+1}_\sc(M)$, then $B_1 u\in \Hsc{s,\ell} \cap \Hsc{s-1/2,\ell+1/2}$, and the estimate
$$
      \norm{B_1 u}_{s,\ell} + \module{\Im \lambda}^{\12}    \norm{B_1 u}_{s-\12,\ell+\12}  \leq C(    \norm{B_2 u}_{s,\ell} +  \| A_0 (P-\lambda) u \|_{s-1,\ell+1}+ \err{u})
$$
holds true (in particular it is uniform in $\lambda \in \complex_{\varepsilon}^{+}$). This  notational convention will be used throughout the text without further mention.

In brief, Proposition \ref{pos} says that knowledge about $u$ being microlocally in $H^{s,\ell}_\sc(M)$   can be propagated {forward} along the flow, i.e.~from $\elll_\sc(B_2)$ to $\elll_\sc(B_1)$, provided that  $(P-\lambda)u$ is in  $H^{s-1,\ell+1}_\sc(M)$ microlocally in a neighborhood of the  trajectory from $\elll_\sc(B_2)$ to $\wf'_\sc(B_1)$.
  
%Beside propagation of singularities one can also show a uniform version of the simpler \emph{elliptic estimate} \cite[Cor.~5.5]{vasygrenoble}.  

Next, for the radial estimates,  we need a classical dynamics with sinks and/or sources. Here we specialize to the following specific situation (particularly relevant for the wave operator with imaginary spectral parameter).

%  recall that in our setting, $L_+$ are the sinks and  $L_-$ the sources. Below, $\ell\in \cf(\p\co)$ and $\ell_\pm=\ell|_{L_\pm}$ as in the main part of the text. 

   \begin{definition}\label{sinkssources} We say that $L_-\subset \basinf$, resp.~$L_+\subset \basinf$, are \emph{sources}, resp.~\emph{sinks} at infinity, if $L_\pm$ is a smooth manifold transversal to $\corner$ and within $\Char_0$:
   \ben 
   \item[a)] $d{p}\neq 0$ on $L_\pm$ and $\xoverline{H}_{p}$ is tangent to $L_\pm$,
   \item[b)]\label{sscond1} there exists a quadratic defining function $\rho_{\pm}$ of $L_\pm$ and a smooth function $\beta_{L_\pm}>0$ satisfying
  $$
   \mp\xoverline{H}_{p} \rho_{\pm}= \beta_{L_\pm} \rho_{\pm} + s_\pm + r_\pm  
  $$
   for some smooth $s_\pm,r_\pm$ such that $s_\pm>0$  on $L_\pm$ and $r_\pm$ vanishes cubically at $L_\pm$,
   \item[c)]\label{sscond2} there exists $\beta_{\pm}\in\cf(\co)$ such that $\beta_{\pm}>0$ on $L_\pm$ and
   $\mp\xoverline{H}_{p} \rho = \beta_{\pm} \rho$ modulo cubically vanishing terms.
      \item[d)]\label{sscond3} there exists $\beta_{\infty}\in\cf(\co)$ such that {$\beta_{\infty,\pm}=0$} on $L_\pm$ and
      $\mp\xoverline{H}_{p} \rho_\infty = \beta_{\infty,\pm}\beta_\pm \rho_\infty$ modulo cubically vanishing terms.
   \een
   \end{definition}

By saying that $\rho_{\pm}\in \cf(\co)$ is a \emph{quadratic defining function of $L_\pm$} one means that $\rho_{\pm}=\sum_i \rho_{\pm,i}^2$ for finitely many  $\rho_{\pm,i}$ such that  $L_\pm=\cap_i\{ \rho_{\pm,i} =0\}$ within $\Char_0 \cap \,\basinf$, and the differentials $d \rho_{\pm,i}$ are  linearly independent  on $L_\pm\cap \Char_0$. 

 A more detailed discussion of  conditions b)--d) (or strictly speaking, of their analogue in the case of sources and sinks located at fiber infinity) can be found in \cite[\S 5.4.7]{vasygrenoble}. For the sake of simplicity,  Definition \ref{sinkssources} is formulated in terms of the  boundary functions $\rho,\rho_\infty$,  but any other  boundary defining functions could be used instead.

The following \emph{lower decay radial estimate} can be used to propagate regularity and decay properties of $u$ into a sink $L_+$ from a punctured neighborhood $U\setminus U_1$. The \emph{higher decay radial estimate} serves to gain regularity and  decay properties in a neighborhood of a source $L_-$ provided that the decay is already better than the \emph{threshold value} $-\12$.
 
  %We refer  again to \cite{melrosered} and  \cite[Prop.~5.27]{vasygrenoble} for the fixed $z$ version, and to \cite{vasyessential} for the modifications in the proof needed to accomodate for the $(\Im z)$ term. 
 
 \bep[Lower decay radial estimate  {\cite[(5)]{vasyessential}}]\label{radial2}  Let $s\in\rr$ and assume that $\ell$ is non-decreasing along the Hamilton flow and $\ell|_{L_+}<-\12$. Let $B_1,B_2,A_0\in\Psi^{0,0}_{\sc}(M)$ and let $U_1, U$ be open neighborhoods of $L_+$ in  $\p\co$ and assume $U_1\subset \elll_\sc(B_1)$, $\wf'_\sc(B_1)\subset \elll_\sc(A_0)\subset U$ and $\wf'_\sc(B_2)\subset U\setminus U_1$. Assume  that:
    $$ 
    \bea
    &\forall q\in \wf'_\sc(B_1)\cap \Char_\sc(P-\lambda)\setminus L_+, \\ &\exists\, q'\in\elll_\sc(B_2)  \mbox{ s.t. } q\succ q' \mbox{ and }  \gamma_{q\sim q'} \subset \elll_\sc(A_0).  
    \eea
    $$
Then for all $u\in H^{\SL}_\sc(M)$ and $\lambda\in\complex_{\varepsilon}^{+}$,
    $$
    \norm{B_1 u}_{s,\ell} +  \module{\Im \lambda}^{\12}    \norm{B_1  u}_{s-\12,\ell+\12}    \lesssim   \norm{B_2 u}_{s,\ell} +  \| A_0(P-\lambda) u \|_{s-1,\ell+1}+ \err{u}.
    $$
 \eep

    \bep[Higher decay radial estimate {\cite[(4)]{vasyessential}}]\label{radial1}  Let $s\in\rr$ and assume that $\ell$ is non-decreasing along the Hamilton flow and  $\ell|_{L_-}>-\12$. Let $B_1,A_0\in\Psi^{0,0}_{\sc}(M)$ and let $U$ be a sufficiently small open neighborhood of $L_-$ in $\p\co$. Assume   $L_-\subset \elll_\sc(B_1)$ and $\wf'_\sc(B_1)\subset \elll_\sc(A_0)\subset U$. 
     Then for all $s'\in \rr$, $\ell'\in\open{-\12,\ell}$ all $u\in H^{\SL}_\sc(M)$ such that $A_0 u \in\Hsc{s',\ell'}$ and all $\lambda\in\complex_{\varepsilon}^{+}$,
       \beq\label{erff}
       \norm{B_1 u}_{s,\ell} +  \module{\Im \lambda}^{\12}    \norm{B_1 u}_{s-\12,\ell+\12}    \lesssim     \| A_0(P-\lambda) u \|_{s-1,\ell+1}+ \err{u}.
       \eeq
    \eep
    
    One of our objectives is to prove a refinement of these estimates  in the resolved setting. To that end we first recall what are the relevant steps in the proof of  Propositions  \ref{pos}--\ref{radial1} in \cite{vasygrenoble,vasyessential},  and then in the next subsection we explain how exactly the individual steps need to be modified in the resolved situation. 
    
    \smallskip
    \noindent \textbf{Sketch of proof of Proposition  \ref{pos} and Propositions \ref{radial2}--\ref{radial1}.} %\step{1} As a preliminary, it is useful to separate the imaginary part of $\lambda$ by writing $P-\lambda=P-\Re \lambda -i \Im \lambda$. Then, $P-\Re \lambda$ has the same principal symbol as   
    Let us recall that the core of the proof is a microlocal positive commutator argument. Namely, given $s$ and $\ell$, one first constructs an operator $A\in \Psisc^{2s-1,2\ell+1}(M)$ (often called the \emph{commutant}), such that the  commutator expression
         \beq\label{eq:comm}
         \bea
         i(AP - P^* A) &= i [A,P] + i (P-P^*)A \\
         & = i [A,P] + 2 (\Im  P)A
         \eea
         \eeq 
         has positive principal symbol microlocally in the region of interest. This positivity property  translates then to an \emph{a priori} estimate, which then needs to be supplemented with a few more arguments   to obtain estimates in the form stated in the proposition. More precisely, the proof consists of the following steps.     
         
     \step{1} Anticipating the need for a generalization in \emph{Step 3.}~of the proof, we assume here that $2 \Im P \in \Psisc^{1,-1}(M)$ (rather than  $\Psisc^{1,-1-\delta}(M)$) and we denote  by $\tilde p\in S^{1,-1}(\Tsc^*M)$  its principal symbol. The first task is to construct $A$ on the principal symbol level. Observe that if $a\in S^{2s-1,2\ell+1}(\Tsc^*M)$ is the principal symbol of $A$, then 
    the principal symbol of \eqref{eq:comm} is $-(H_p a  + \tilde p a)\in S^{2s,2\ell}(\Tsc^*M)$. Therefore, the objective is to find $a\in S^{2s-1,2\ell+1}(\Tsc^*M)$  such that
\beq\label{eq:Hpa}
-({H}_p a + \tilde p a) =  b^2 - e 
\eeq
for some real-valued symbol $b\in S^{s,\ell}(\Tsc^*M)$ non-vanishing in the region of interest and some  $e\in  S^{2s,2\ell}(\Tsc^*M)$  supported in the region where we have a priori knowledge about $u$ being in microlocally $\Hsc{s,\ell}$.

 Let us first consider the case of propagation of singularities, i.e.~Proposition  \ref{pos}.  In practice it suffices to solve \eqref{eq:Hpa} in terms of symbols supported in a small neighborhood $U\subset \co$ of each $q\in \p\co$ in the region of interest, and then construct $a$ as a locally finite sum.  In what follows we write $c\in S^{0,0}(U)$ if $c\in S^{0,0}(\Tsc^*M)$ and $\supp c \subset U$ (and similarly for more general orders).  

To make the connection with the dynamics of the rescaled Hamilton flow $\xoverline{H}_p$ on $\co$, it is useful to reduce \eqref{eq:Hpa} to a similar problem for symbols of order $(0,0)$.  Namely, on the level of $(0,0)$-order symbols and for sufficiently small $U$, one shows that for all  $C \geq 0$ there exists $a_0,b_{0},e_{0}\in S^{0,0}(U)$ with the desired support properties (and with $a_0\geq 0$),   such that  
 \beq\label{eq:Hpa2}
-\xoverline{H}_p a_0 =  C a_0  +b^2_0 - e_0.
 \eeq 
Once the problem is formulated in this way, the construction is rather standard and its details are not significant for us, see \cite[\S5.4.3]{vasygrenoble}. On the other hand, it is important to recall how  \eqref{eq:Hpa} is deduced from the $(0,0)$-order version \eqref{eq:Hpa2}. One considers separately the two  relevant cases:
   \ben 
   \item[a)] If $q\in \corner$ then we set $a\defeq \rho^{-2s+1}_\infty \bdf^{-2\ell-1} a_0\in S^{2s-1,2\ell+1}(U)$.  Recall that $\xoverline{H}_p$ is tangent to $\p\co$, so $\xoverline{H}_p \rho=0$ and $\xoverline{H}_p\rho_\infty=0$ at $q$. Supposing for the moment that $s,\ell$ are constant,  this implies that $\xoverline{H}_p  \rho^{-2s+1}_\infty \bdf^{-2\ell-1}=0$ at $q$ and in consequence, modulo some vanishing term, $H_p a$ is a multiple of  $\xoverline{H}_p a_0$ times powers of $\rho$ and $\rho_\infty$. In this situation it is straightforward to conclude \eqref{eq:Hpa} from \eqref{eq:Hpa2} by  multiplying everything with appropriate powers of $\rho$ and $\rho_\infty$ and by taking $C$ large enough so that 
   $$
   \tilde p a + C \rho_\infty^{-1} \rho a \geq 0,
   $$
which can then be absorbed into the definition of $b^2$.
    More generally if the orders $s,\ell$ are variable, then $\xoverline{H}_p  \rho^{-2s+1}_\infty \bdf^{-2\ell-1}$ does not necessarily vanish, in fact:
  \beq\label{eq:der}
  \bea
  &\xoverline{H}_p  \rho^{-2s+1}_\infty =2 \rho^{-2s+1}_\infty (-\log \rho_\infty)   (-\xoverline{H}_p s),\\
   &\xoverline{H}_p  \rho^{-2\ell-1} = 2\rho^{-2\ell-1} (-\log \rho)   (-\xoverline{H}_p \ell)
  \eea
  \eeq
  modulo $O(\rho)O(\rho_\infty)$.
  However, we have assumed $\xoverline{H}_p s\leq 0$ and $\xoverline{H}_p \ell \leq 0$, so the extra contributions from the derivatives \eqref{eq:der} are non-negative and thus can be absorbed in the definition of $b^2$.

   \smallskip  
   \item[b)] If $q\in\fibinf \setminus \corner$ then we set  $a\defeq \rho_\infty^{-2s-1} a_0\in S^{2s+1,0}(U)$. For  $U$  sufficiently small so it does not intersect the corner,  the order at base infinity is irrelevant and we can write $\rho_\infty^{-2s-1} a_0\in S^{2s-1,2\ell+1}(U)$. Then everything is fully analogous to a), except that  only the  $\rho_\infty^{-2s-1}$ weight enters the arguments. In particular, instead of the two identities \eqref{eq:der}  it suffices to use 
   \beq
\xoverline{H}_p  \rho^{-2s+1}_\infty =2 \rho^{-2s+1}_\infty (-\log \rho_\infty)   (-\xoverline{H}_p s)
   \eeq
modulo $O(\rho_\infty)$.
\een

In the case of radial points,  the above construction no longer applies. Instead, one needs to rely on the specific dynamics at sources and sinks  given by Definition \ref{sinkssources}. 

The commutant $a$ is taken to be of the form
\beq\label{eq:rcomm}
a = \phi(\rho_\pm)  \rho^{-2s+1}_\infty\rho^{-2\ell-1} a_0,
\eeq
 where $\phi\in \cf_\c(\clopen{0,\infty})$ equals $1$ near $0$, $\phi\geq 0$ and $-\phi'\geq 0$ (with smooth square roots).  The symbol  $a_0$ localizes near the characteristic set $\Char_0$ and can be chosen such that $\xoverline{H}_p a_0$ is supported away from $\Char_0$. In consequence, the  elliptic estimate applies on the support of $\xoverline{H}_p a_0$, so $a_0$ contributes to the computation of  $-{H}_p a$ in an unessential way. Concerning  the contribution from $\phi(\rho_\pm)$, by taking $\phi$ supported close enough to $0$ one deduces easily from  the source/sink condition b) in Definition \ref{sinkssources} that $
\pm \xoverline{H}_p\phi (\rho_\pm)  \geq 0
 $
in a neighborhood of $L_\pm$.  More precisely,
$$
\pm \xoverline{H}_p\phi (\rho_\pm)  =c_1^2
$$
for some $c_1$  supported on $\supp \phi'(\rho_\pm)$ (thus $c_1$ is supported in a punctured  neighborhood of $L_\pm$).  Finally, the contributions from $\rho^{-2s+1}_\infty$  and $\rho^{-2\ell-1}$ are computed from conditions c)--d) in Definition \ref{sinkssources} (i.e., $\mp\xoverline{H}_{p} \rho = \beta_{\pm} \rho$ with $\beta_\pm>0$ on $L_\pm$ and  $\mp\xoverline{H}_{p} \rho_\infty =\beta_{\infty,\pm}\beta_\pm \rho_\infty$ with $\beta_{\infty,\pm}= 0$ on $L_\pm$).  Without loss of generality we can assume that $s$ and $\ell$ are constant Considering all this and   disregarding  contributions arising from $a_0$, one obtains
 \beq\label{eq:toexamine}
 -({H}_p a + \tilde p a)  = \mp 2 \rho_\infty^{-2s} \rho^{-2\ell}\Big(  c_1^2 +\phi(\rho_\pm)     \beta_\pm \big(\beta_{\infty,\pm} (s-\textstyle\12)  +  \ell + \tilde\beta_\pm+\12 \big)  \Big), 
 \eeq
 where $\tilde\beta_\pm\in S^{0,0}(\Tsc^*M)$  is a suitable normalization of $\tilde p$, namely $\tilde p  \eqdef \pm 2\beta_\pm \tilde\beta_\pm \rho$ sufficiently close to $L_\pm$. 
 
% At this point it is useful to remark that instead of the microlocal positivity statement \eqref{eq:Hpa} it is equally useful to prove an analogous microlocal \emph{negativity} property, i.e.~prove that
 %$$
 %-({H}_p a + \tilde p a) =  -b^2 + e
% $$
%(rather than $b^2-e$) for suitable  real-valued $b$ and $e$, see \emph{Step 2}.

%With that in mind, 

Let us examine positivity properties of the r.h.s.~of \eqref{eq:toexamine}. Since $\beta_{\infty,\pm}=0$ on $\{ \rho_\pm =0\}$, the term with $\beta_{\infty,\pm}$ can be made as small as wanted by choosing $\supp \phi$ sufficiently close to $0$. Thus, it can be disregarded if $\ell  + {\tilde\beta_+}\neq 0$. We consider  the following two cases:
\ben
\item  In the sink case ($+$ sign), suppose  $\ell  + {\tilde\beta_+}  <  -\12$ on $L_+$. Then the  second summand in \eqref{eq:toexamine}  is  $\geq 0$ (it contributes to $b^2$), and $>0$ on $L_+$. The first summand is not positive (it contributes to $-e$) and it is supported in a punctured neighborhood of $L_+$. This situation is relevant for Proposition \ref{radial2}.
\item In the source case ($-$ sign), suppose $\ell  + \tilde\beta_- >  -\12$ on $L_-$. Then all terms in \eqref{eq:toexamine}  are  $\geq 0$, and $>0$ on $L_-$ (they contribute to $b^2$, and we can set $e=0$). This situation is relevant for Proposition \ref{radial1}.
\een
\smallskip

\step{2} In the second step we deduce an a priori estimate. Namely, given symbols satisfying
\beq\label{eq:done}
-({H}_p a + \tilde p a) =  b^2 - e,
\eeq
one takes $A\in \Psisc^{2s-1,2\ell+1}(M)$, $B\in \Psisc^{s,\ell}(M)$, resp.~$E\in \Psisc^{2s,2\ell}(M)$  with principal symbol $a$, $b$, resp.~$e$. We can arrange that $A$ is the square of a formally self-adjoint operator $A^\12\in\Psisc^{s-\12,\ell+\12}(M) $ with principal symbol $a^\12$.  As already remarked, the l.h.s.~of \eqref{eq:done} is the principal symbol of $i(AP - P^* A)$, and of course the r.h.s.~is the principal symbol of $B^* B - E$, so on the operator level
 $$
 i(AP - P^* A) = B^* B -E  + F
 $$ 
for some lower order  term $F\in \Psisc^{2s-1,2\ell-1}(M)$. Thus, for all $u\in H_\sc^{\infty,\infty}(M)$, 
$$
\bea
\bra i (P-\lambda) u  , Au \ket- \bra i A u, (P-\lambda) u\ket &=  \bra i(AP - P^* A)  u, u \ket + 2 (\Im \lambda) \bra  A  u,u\ket\\
&= \norm{B u}^2 - \bra E u , u \ket +  \bra F u , u \ket  + 2 (\Im \lambda) \bra  A  u,u\ket.
\eea
$$
On the other hand, a routine use of Cauchy--Schwarz inequality and Young inequality gives
\beq\label{eq:cs}
\bea
\module{\bra  (P-\lambda) u  , Au \ket} &\leq  \|A^\12 (P-\lambda)u\|_{-\12,\12}  \|A^\12 u\|  _{\12,-\12}\\
&\leq \epsilon^{-1}   \|A^\12 (P-\lambda)u\|_{-\12,\12}^2  +  \epsilon \|A^\12 u\|_{\12,-\12}^2
\eea
\eeq
for any $\epsilon >0$. Thus, 
$$
\norm{B u}^2  +  2 (\Im \lambda) \bra  A  u,u\ket  \leq  \bra E u , u \ket + \epsilon^{-1}   \|A^\12 (P-\lambda)u\|_{-\12,\12}^2  +  \epsilon \|A^\12 u\|_{\12,-\12}^2 -  \bra F u , u \ket. 
$$
For $\epsilon$ sufficiently small, the $\epsilon \|A^\12 u\|_{\12,-\12}^2$ term can be absorbed into the first term on the l.h.s. Using  elliptic estimates  and rewriting the resulting inequality in terms of norms rather than squared norms, one gets  an a priori estimate of the form
$$
 \norm{B_1 u}_{s,\ell} + \module{\Im \lambda}^{\12}    \norm{B_1 u}_{s-\12,\ell+\12}  \lesssim    \norm{B_2 u}_{s,\ell} +  \| A_0 (P-\lambda) u \|_{s-1,\ell+1}+ \norm{B_3 u}_{s-\12,\ell-\12}+  \err{u},
$$
where  the $\norm{B_3 u}_{s-\12,\ell-\12}$ term  comes from  the lower order term $F$ (and $B_2=0$ in the case of $\ref{radial1}$).

\step{3} The a priori estimate above is true provided that the norms on both the r.h.s.~and l.h.s.~are finite. To get an estimate in the strong sense (i.e.,  if the  norms on the r.h.s.~are finite then the l.h.s.~is finite), one first chooses a suitable family of  invertible elliptic operators $R_\delta$ of sufficiently negative order for $\delta>0$, and such that $R_\delta\to \one$ in $\Psisc^{0+,0+}(M)$.   Then  one applies the a priori estimate with $P$ replaced by $R_\delta P R_\delta^{-1}$, and with $u$ replaced by  its regularized version  $u_\delta=R_\delta u$.  Considering the operator  $R_\delta P R_\delta^{-1}$ instead of $P$ adds an extra imaginary part, but this still fits into the assumptions of  \emph{Step 1} and \emph{Step 2}, and uniformity of the estimates allows to take the $\delta\to 0^+$ limit, see  \cite[\S5.4.4]{vasygrenoble} for the propagation of singularities case.    In the case of radial estimates, a further regularization argument is needed to use integration by part (to trade $P^*$ for $P$ in scalar products), see the paragraph following \cite[(5.62)]{vasygrenoble}.

\step{4} Finally, an iterative use of the estimate  (which uses the assumptions on the dynamics)  shows that   the error term $\norm{B_3 u}_{s-\12,\ell-\12}$ can be eliminated, and with the help of the elliptic estimate (and propagation of singularities, in the radial estimates case) one proves that the final assertion is true for \emph{all} $B_1,B_2,A_0\in \Psi^{0,0}_\sc(M)$ satisfying the assumptions. \qed

     \subsection{Propagation and radial estimates in resolved calculus}\label{ss:pe2} Now moving to the resolved setting, we first prove a propagation of singularities in the sense of $H^{s,k,\ell}_\scq(M)$ regularity. Note that one could attempt to prove various statements referring to an appropriate generalization of the Hamilton flow on the resolved space $\cores$. Here however we will only need estimates along  the rescaled Hamilton flow  in the already discussed $\sc$-sense.

    Recall that our operator of interest is $P-\lambda\in \Psi^{2,0}_\sc(M)$ with $P-P^*\in \Psi^{1,-1-\delta}_\sc(M)$ and with $\Im \lambda >0$, in particular the  characteristic set $\Char_0$ is $\lambda$-independent.
     
       \bep[Propagation of singularities]\label{pos2} Let $s,k,\ell\in\cf(\co)$ and assume $s,k$ are  non-decreasing along the Hamilton flow. Let $B_1,B_2,A_0\in\Psi^{0,0}_{\sc}(M)$ be such that $\wf'_\sc(B_1)\subset \elll_\sc(A_0)$ and:
       \beq\label{eq:forward}
       \bea
       &\forall q\in \wf'_\sc(B_1)\cap \Char_0, \\ &\exists\, q'\in\elll_\sc(B_2) \mbox{ s.t. } q\succ q' \mbox{ and } \gamma_{q\sim q'} \subset \elll_\sc(A_0).  
      \eea
       \eeq
  Then for all $u\in H^{\SKL}_\scq(M)$ and $\lambda \in \complex_{\varepsilon}^{+}$,
    \beq\label{eq:thepos2}
       \norm{B_1 u}_{s,k,\ell} + \module{\Im \lambda}^{\12}    \norm{B_1 u}_{s-\12, k,\ell}  \lesssim   \norm{B_2 u}_{s,k,\ell} +  \| A_0 (P-\lambda) u \|_{s-1,k,\ell}+ \errr{u}.
       \eeq
       \eep     
         \begin{proof} We  modify the proof of  propagation of singularities in the $\sc$-setting, i.e.~Proposition \ref{pos}. One preliminary aspect to take care of is the validity of an analogue of the elliptic estimate in the resolved setting with the asserted behaviour in $\Im \lambda$. We defer that discussion to \sec{s:semicl}, where we introduce semi-classical techniques, and we focus for now on  estimates near the characteristic set.
          
            We do the following  modification in \emph{Step 1.}~of the proof of Proposition \ref{pos}.   When constructing  the principal symbol $a$  of the commutant $A$ near a point $q\in \corner$ at the corner, away from the $\basi$-face  we  use the weight  $\rho^{-2k}_\ff$ instead of $\rho^{-(2s-1)}_\infty \bdf^{-(2\ell+1)}$ . Then, all the considerations at the principal symbol level are fully analogous provided that we find a  replacement of the identities \eqref{eq:der}  to justify positivity of $\xoverline{H}_p \rho_{\ff}^{-2k}$  when $k$ is not constant. 
         
         Indeed, writing $\xoverline{H}_p=
         \beta \bdf\pa_\bdf+ \beta_\infty \rho_\infty\pa_{\rho_\infty}$
        modulo derivatives that
         annihilate $\bdf$ and $\bdf_\infty$ (where $\beta, \beta_\infty \in \cf(\co)$ is such that $\beta=\mp\beta_\pm$ on $L_\pm$ and similarly for $\beta_\infty$, see Definition \ref{sinkssources}), we have 
         $$
        \xoverline{H}_p  (\bdf+\rho_\infty)^{-2k}=2(\bdf+\rho_\infty)^{-2k}\big(k(\beta \bdf +\beta_\infty\rho_\infty) (\bdf+\rho_\infty)^{-1}+(        -\xoverline{H}_p k)(-\log (\bdf +\rho_\infty))\big).
         $$
Since $k$ is non-increasing along the Hamilton flow, $\xoverline{H}_p k\leq 0$, so the second term overrules the  first one for sufficiently small $\rho_\ff=\rho+\rho_\infty$, hence $\xoverline{H}_p \rho_{\ff}^{-2k}\geq 0$ close to $\corner$.

 Then the remaining steps of the proof apply almost verbatim, with the  difference that the commutant $A$ is now in $\Psiscq^{2s-1,2k,2\ell+1}(M)$ instead of $\Psisc^{2s-1,2\ell+1}(M)$,  so we have to use the resolved calculus  instead of the $\sc$-calculus. In particular $P\in \Psiscq^{2,2,0}(M)$,  
 $[A,P]\in \Psiscq^{2s,2k,2\ell}(M)$ (recall that we gain {two} orders of regularity at the front face when taking a commutator),  and $B\in \Psiscq^{s,k,\ell}(M)$. The decay orders are actually irrelevant  away from the $\basi$-face, and at the $\basi$-face  we can use the elliptic estimate   instead of the positive commutator argument (thanks to the assumption $\Im\lambda \geq\varepsilon$), hence the improved decay orders in \eqref{eq:thepos2} (strictly speaking, as a result one obtains an estimate with  $\norm{B_1 u}_{s,k,\ell-\12} + \module{\Im \lambda}^{1/2}    \norm{B_1 u}_{s-\12, k,\ell}$ on the l.h.s.~instead of $\norm{B_1 u}_{s,k,\ell} + \module{\Im \lambda}^{1/2}    \norm{B_1 u}_{s-\12, k,\ell}$, but these are equivalent for $\Im\lambda \geq\varepsilon$).  Note that the formulation of the proposition is ultimately somewhat simplified thanks to the fact that $\Psiscq^{0,0,0}(M)=\Psisc^{0,0}(M)$. 
    \end{proof}

Next, we consider estimates near radial sets. 
The crucial observation is that within the resolved calculus we can consider commutants which are localized in a neighborhood of the $\basi$-face (and away from the $\fibi$-face), or in a neighborhood of the $\fibi$-face (and away from the $\basi$-face). A simple example of a symbol in $S^{0,0,0}(\Tsc^*M)$  localized in the former region is given by    $\chi(\bdf/\rho_\infty)$ for $\chi\in \cf_\c(\clopen{0,\infty})$  supported sufficiently close to $0$. Similarly, $\chi(\rho_\infty/\bdf)\in S^{0,0,0}(\Tsc^*M)$ is supported away from the $\basi$-face. When deriving positive commutator estimates, we take a commutant $a$ of a similar form as in the $\sc$-setting, but involving in addition one of these two cutoffs. In consequence, positivity properties of ${H}_p a$ are affected by extra terms, the signs of which depends on  the sign of $\chi$, $\chi'$ and $\xoverline{H}_p(\bdf/\rho_\infty)$. In regard to the latter, we simply have
\beq\label{eq:2rho}
\mp \xoverline{H}_p(\bdf/\rho_\infty)=( 1- \beta_{\infty,\pm}) \beta_\pm\bdf/\rho_\infty, \quad \pm \xoverline{H}_p(\rho_\infty/\bdf)=( 1- \beta_{\infty,\pm}) \beta_\pm \rho_\infty/\bdf. 
\eeq
Recall that $\beta_\pm=0$ on $L_\pm$, so close to the radial sets this has a behaviour  analogous  to the sink/source property $\mp \xoverline{H}_p \bdf=\beta_\pm \bdf$.

We first prove a radial estimate localized near the $\fibi$-face.   In what follows, we use the same notation for $L_\pm$ and its  lift to $\p\cores$.

   \bel[Radial estimates near $\fibi$-face]\label{lem:rad1} Let $s,k\in\cf(\co)$ be non-decreasing along the Hamilton flow.  Let $U_1,U_2,U$ be sufficiently small neighborhoods of the $\fibi$-face in $\p\cores$ and suppose $B_1,B_2,A_0\in\Psi^{0,0}_{\sc}(M)$ satisfy  $U_1\subset \elll_\scq(B_1)$, $\wf'_\scq(B_1)\subset \elll_\scq(A_0)\subset U$,  $\wf'_\scq(B_2)\subset U_2$, and
   $$ 
            \bea
            &\forall q\in \wf'_\sc(B_1)\cap \Char_0\setminus L_+, \\ &\exists\, q'\in\elll_\sc(B_2)  \mbox{ s.t. } q\succ q' \mbox{ and }  \gamma_{q\sim q'} \subset \elll_\sc(A_0).  
            \eea
   $$ 
   Suppose one of the following is true:
\ben
 \item  $(k-s)|_{L_+}<-\12$,  $U_1 \subset \ff^\inti$, and $U_2$ is a small punctured neighborhood of $L_+$,
  \item  $(k-s)|_{L_-}>-\12$,  $U_2\subset  \ff^\inti$, and $U_2$ is in a small neighborhood of $L_+$.
\een
Then for all $u\in H^{\SKL}_\scq(M)$ and  $\lambda\in\complex_{\varepsilon}^{+}$,
         $$
         \norm{B_1 u}_{s,k,*} +  \module{\Im \lambda}^{\12}    \norm{B_1  u}_{s-\12,k,*}    \lesssim   \norm{B_2 u}_{s,k,*} +  \| A_0(P-\lambda) u \|_{s-1,k,*}+ \norm{u}_{\scriptscriptstyle{S},\scriptscriptstyle{K},*}.
         $$        
   \eel
   
   The $*$ notation above means that the estimate is  true for all choices of decay order (as trivially follows from the assumption that $B_1,B_2,A_0$ have wavefront set away from the $\basi$-face).
   
   \refproof{Lemma \ref{lem:rad1}} We use a positive commutator argument  similar to the proof of Proposition \ref{radial2},  but multiply by a $\chi (\rho_\infty/\bdf )$ cutoff the commutant \eqref{eq:rcomm} in \emph{Step 1.}, i.e.~we take
   $$
   a = \chi (\rho_\infty/\bdf ) \phi(\rho_\pm)  \rho^{-2s+1}_\infty\rho^{-2\ell-1} a_0
   $$
with $\ell=k-s$, for some cutoff function $\chi\in \cf_\c(\clopen{0,\infty})$ equal $1$ near $0$, $\chi\geq 0$ and $-\chi'\geq 0$, with smooth square roots $\chi^\12$ and $(-\chi')^\12$. Then by \eqref{eq:2rho}, 
$$
{\mp \phi(\rho_\pm)\xoverline{H}_p \chi(\rho_\infty/\bdf)=-\phi(\rho_\pm)\big( 1+O(\rho_\pm)\big)\beta_\pm  \chi'(\rho_\infty/\bdf)\rho_\infty/\bdf \geq 0}
$$
provided $\supp\phi$ is sufficiently  small, so $\mp \phi(\rho_\pm)\xoverline{H}_p \chi(\rho_\infty/\bdf)=c_2^2$ for some real-valued $c_2$ supported in $\supp \chi'(\rho_\infty/\bdf)\cap \supp \phi(\rho_\pm)$ (thus in a punctured neighborhood of $\fibi$).  Disregarding the symbol $a_0$ which localizes near the characteristic set $\Char_0$, we conclude that $ -({H}_p a + \tilde p a) $ equals
 \beq\label{eq:toexamine2}
 \bea
  \mp 2\rho_\infty^{-2s} \rho^{-2k+2s}\Big(   \chi (\rho_\infty/\bdf )  c_1^2 - c_2^2  + \phi(\rho_\pm)  \chi (\rho_\infty/\bdf )    \beta_\pm \big(   k-s + \tilde\beta_\pm  + \textstyle\12 + O(\rho_\pm) \big) \Big),
 \eea
 \eeq
 where $c_1$ is  supported on $\supp \phi'(\rho_\pm)$ (thus    in a punctured  neighborhood of $L_\pm$).
 We consider the  following two cases:
 \ben
 \item  In the sink case, $L_+$, suppose  $k-s + \tilde\beta_- <  -\12$ on $L_+$. Then the second+third summand in \eqref{eq:toexamine2}  is $\geq 0$, and $>0$ on a subset of the interior of the $\ff$-face  (intersected with a small neighborhood of $L_{+}$). The remaining   one  is supported  in a neighborhood of the $\fibi$-face (intersected with a punctured neighborhood of $L_{+}$).  \smallskip
 \item In the source case, $L_-$, suppose $k-s  + \tilde\beta_- >  -\12$ on $L_-$. Then the first+third term in \eqref{eq:toexamine}  is $\geq 0$, and $>0$ on a neighborhood of the $\fibi$-face   (intersected with a neighborhood of $L_-$).  The remaining   one  is supported in the interior of the $\ff$-face  (intersected with a small  neighborhood of $L_+$). 
 \een
From that point on the proof is analogous to Proposition \ref{radial2} (again, the positive summands on the r.h.s.~of used to define $b^2$, and the other ones absorbed in the definition of $e$).% Note that all the operators entering the positive commutator argument are supported in a small neighborhood of the $\fibi$-face.
\qeds

Next, we show estimates localized near the $\basi$-face, and then combine them with the previous lemma to get radial estimates in $\Hscq{s,k,\ell}$ spaces, summarized in  Propositions \ref{radial3}--\ref{radial4}   below. A crucial  feature of the resolved setting is that on $\ff^\inti$, $\Im \lambda$ is \emph{sub-principal} and  therefore the assumption $\Im \lambda\geq \varepsilon>0$ can be usefully exploited in the positive commutator estimates.
       
   \bep[Lower decay radial estimate]\label{radial3}  Let $s,k,\ell\in\cf(\co)$ and assume that $s,k,\ell$ are non-decreasing along the Hamilton flow and $(k-s)|_{L_+}<-\12$. Let $B_1,B_2,A_0\in\Psi^{0,0}_{\sc}(M)$ and let $U_1, U$ be open neighborhoods of $L_+$ in  $\p\co$. Assume that $U_1\subset \elll_\sc(B_1)$, $\wf'_\sc(B_1)\subset \elll_\sc(A_0)\subset U$, $\wf'_\sc(B_2)\subset U\setminus U_1$, and:
      $$ 
      \bea
      &\forall q\in \wf'_\sc(B_1)\cap \Char_0\setminus L_+, \\ &\exists\, q'\in\elll_\sc(B_2)  \mbox{ s.t. } q\succ q' \mbox{ and }  \gamma_{q\sim q'} \subset \elll_\sc(A_0).  
      \eea
      $$
Then for all $u\in H^{\SKL}_\scq(M)$ and $\lambda\in\complex_{\varepsilon}^{+}$, 
      \beq\label{eq:newrad1}
      \norm{B_1 u}_{s,k,\ell} +  \module{\Im \lambda}^{\12}    \norm{B_1  u}_{s-\12,k,\ell}    \lesssim   \norm{B_2 u}_{s,k,\ell} +  \| A_0(P-\lambda) u \|_{s-1,k,\ell}+ \errr{u}
      \eeq
   \eep
   
       \bep[Higher decay radial estimate]\label{radial4}  Let $s,k,\ell\in\cf(\co)$ and assume that $s,k,\ell$ are non-decreasing along the Hamilton flow and  $(k-s)|_{L_-}>-\12$. Let $B_1,A_0\in\Psi^{0,0}_{\sc}(M)$ and let $U$ be a sufficiently small open neighborhood of $L_-$ in $\p\co$. Assume   $L_-\subset \elll_\sc(B_1)$ and $\wf'_\sc(B_1)\subset \elll_\sc(A_0)\subset U$. 
        Then for all $s',k',\ell'\in \rr$ with $k'-s'\in\open{-\12,k-s}$, for all $u\in H^{\SKL}_\scq(M)$ such that $A_0 u \in\Hsc{s',k',\ell'}$, and all  $\lambda\in\complex_{\varepsilon}^{+}$,
          \beq\label{eq:newrad2}
          \norm{B_1 u}_{s,k,\ell} +  \module{\Im \lambda}^{\12}    \norm{B_1 u}_{s-\12,k,\ell}    \lesssim     \| A_0(P-\lambda) u \|_{s-1,k,\ell}+ \errr{u}.
          \eeq
       \eep

    \refproof{Propositions \ref{radial3}--\ref{radial4}} As already outlined, we first consider radial estimates near the $\basi$-face. To that end we write $P-\lambda= (P - i {\varepsilon}/2) -(\lambda-i \varepsilon/2)$ and apply the positive commutator argument to $P-i \varepsilon/2$ instead of $P$, and  to $\lambda-i \varepsilon/2$ instead of $\lambda$. This decomposition has the advantage that now $P-i\varepsilon/2$ has  an imaginary part $\varepsilon/2>0$, which is sub-principal at $\ff^\inti$. On the other hand we still have  $\Im(\lambda-i \varepsilon/2)\geq \varepsilon/2 >0$ for $\lambda \in\complex_{\varepsilon}^{+}$, and the large $\Im \lambda$ behaviour of the estimates will remain unaffected (since $\Im(\lambda-i \varepsilon/2)\geq (\Im \lambda )/2$ on $Z^+_\varepsilon$).   
     
In the  sink/source case $L_\pm$, we take the commutant
       $$
       a = \chi (\bdf/\rho_\infty ) \phi(\rho_\pm)  \rho^{-2(k-\ell)+1}_\infty\rho^{-2\ell-1} a_0,
       $$
       with $\chi$ and $\phi$ as before.
        Similarly as in Lemma \ref{lem:rad1}, we use the sink/source set behaviour \eqref{eq:2rho} to deduce that
         $$
            \pm \phi(\rho_\pm)\xoverline{H}_p \chi(\bdf/\rho_\infty )=-\phi(\rho_\pm)\big( 1+O(\rho_\pm)\big)\beta_\pm  \chi'(\bdf/\rho_\infty )\bdf/\rho_\infty  \eqdef c_1^2
            $$
            is non-negative. Disregarding $a_0$ and derivatives of $s$ and $\ell$, $ -({H}_p a + \tilde p a) $ equals  
        \beq\label{eq:pce}
        \bea
    &  \mp 2\rho_\infty^{-2(k-\ell)} \rho^{-2\ell}\Big(   \chi (\bdf/\rho_\infty )  c_1^2 + c_2^2  + \phi(\rho_\pm)  \chi (\bdf/\rho_\infty)   \big( \beta_\pm (   \ell + \tilde\beta_\pm  + \textstyle\12)   \pm \textstyle\12   \varepsilon  \rho_\infty /\rho + O(\rho_\pm) \big) \Big)\\
    &=  \rho_\infty^{-2(k-\ell)+1} \rho^{-2\ell-1}\Big(  \mp 2c_2^2 \rho/\rho_\infty +   \phi(\rho_\pm)  \chi (\bdf/\rho_\infty) \big(  \varepsilon+ O(\bdf/\rho_\infty)\big) \Big).
      \eea
     \eeq
   Since $\varepsilon$ is a fixed positive number, it dominates the $O(\rho/\rho_\infty)$ term if we choose $\chi$ supported sufficiently close to $0$. For this reason the value of $\ell|_{L_\pm}$ plays no longer a role.  Then:
    \ben
    \item  In the sink case $L_+$,  the second summand is $\geq 0$, and $>0$ near  $L_{+}\cap \basi$. The first summand is not positive, and supported in $\supp \chi' (\bdf/\rho_\infty)\subset \ff^\inti$. \smallskip
    \item In the source case $L_-$, both summands are $\geq 0$, and $>0$ near $L_-\cap \basi$. 
    \een
    
The  orders in  \emph{Step 2} of the positive commutator argument are slightly modified, namely $A^\12,B \in  \Psiscq^{*,2k,2\ell}(M)$.  Correspondingly, \eqref{eq:cs} is replaced by
$$
\module{\bra  (P-\lambda) u  , Au \ket} \leq \epsilon^{-1}   \|A^\12 (P-\lambda)u\|^2  +  \epsilon \|A^\12 u\|^2.
$$ 
    In  case (1) we eventually obtain an estimate of the form
       $$
        \norm{C_{1} u}_{*,k,\ell+\12} +  \module{\Im \lambda}^{\12}    \norm{C_{1}  u}_{*,k,\ell+\12}    \lesssim   \norm{C_{2} u}_{*,k,*} +  \| A_0(P-\lambda) u \|_{*,k,\ell+\12}+ \errr{u},
            $$
  where $C_1$ has wavefront set near the $\basi$-face, and $\wf'_{\scq}(C_{2})\subset \ff^\inti$ (for this reason the Sobolev norm of $C_2 u$ is of arbitrary order at the $\fibi$ and $\basi$-face). But in the interior of the front face,   Lemma \ref{lem:rad1} applies (case (1)), so taking the $ \norm{C_{2} u}_{*,k,*}$ norm  to be specifically $\norm{C_2 u}_{s,k,\ell+\12}$ with $(k-s)|_{L_+}<-\12$,   $\norm{C_2 u}_{s,k,\ell+\12}$ can be estimated by  the r.h.s~of \eqref{eq:newrad1}. This yields an estimate near the $\basi$-face, and by combining it with the estimate near the $\fibi$-face (case (1) of Lemma \ref{lem:rad1}  again) we obtain \eqref{eq:newrad1} (after relabelling $\ell+\12$ to $\ell$).
  
     In  case (2),  all summands in \eqref{eq:pce} are positive, so we obtain an estimate of the form
       $$
             \norm{C_{1} u}_{*,k,\ell} +  \module{\Im \lambda}^{\12}    \norm{C_{1}  u}_{*,k,\ell}    \lesssim     \| A_0(P-\lambda) u \|_{*,k,\ell}+ \errr{u},
                 $$
  where $C_1$ has wavefront set near the $\basi$-face. By combining it with case (2) of  Lemma \ref{lem:rad1}  we conclude the higher decay estimate \eqref{eq:newrad2}. \qed

    \section{Resolvent estimates}\label{s:resolvent}\init

    % In this section, we move on to the global analysis of $\square_g-\lambda$ for $\Im \lambda >0$. 

       \subsection{Fredholm estimates in resolved setting}

       In this section, we move on to the global analysis of $P-\lambda$ for $\Im \lambda >0$.  We make the same assumptions as in the previous sections, i.e.~$P\in \Psi^{2,0}_\sc(M)$ is classical, $
       P-P^*\in  \Psi^{1,-1-\delta}_\sc(M)$ 
       for some $\delta>0$, and $\lambda \in\complex_\varepsilon^+$, where $\complex_\varepsilon^+$ is some subset of $\{ \Im \lambda \geq \varepsilon\}$.
   
   To that end, we make the assumption that $P$ is non-trapping (at zero energy) in the following sense. 
    
   \begin{definition}\label{maindef} We  say that $P$ is \emph{non-trapping}  if there are sinks $L_+\subset \basinf$ and sources $L_-\subset\basinf$ at infinity in the sense of   Definition \ref{sinkssources},  and within $\Char_0$, each bicharacteristic goes either from  $L_+$ to $L_-$, or from $L_-$ to $L_+$, or stays within $L_+$ or $L_-$. We say that a Lorentzian $\sc$-space $(M,g)$ is non-trapping if the wave operator $\square_g$ is non-trapping.
   \end{definition}
   
Under the non-trapping assumption, the lower and higher decay estimates, Propositions \ref{radial3}--\ref{radial4},  can be combined using standard arguments (see e.g.~\cite[\S 5.4]{vasygrenoble} for the wavefront set formulation,  cf.~\cite[\S 3.2]{hassell} for a pedagogical account on the level of estimates) to get the following global estimate). 
   
       \bep[Global estimate in resolved setting]  \label{prop:Fredholm} Suppose $P$ is non-trapping. Let $s,k,\ell\in\cf(\co)$ and assume that $s,k,\ell$ are non-decreasing along the Hamilton flow and  satisfy
           \beq\label{eq:thresh}
           (k-s)|_{L_-}>-\textstyle\12 \mbox{ and } (k-s)|_{L_+}<-\textstyle\12.
           \eeq
        Then for all non-decreasing $s',k',\ell' \in \cf(\co)$  with $k'-s'\in\open{-\12,k-s}$ at $L_-$, for all $u \in\Hsc{s',k',\ell'}$ and all  $\lambda\in\complex_{\varepsilon}^{+}$,
          \beq\label{eq:Fredholm}
          \norm{u}_{s,k,\ell} +  \module{\Im \lambda }^\12    \norm{u}_{s-\12,k,\ell}    \lesssim    \| (P-\lambda) u \|_{s-1,k,\ell}+ \errr{u}.
          \eeq
 
       \eep

      As a conclusion we obtain $L^2$  estimates. Recall that the minimal extension of $P$ is the closure of $P$ acting on $C_{\rm c}^\infty(M)$, and the maximal extension of $P$ (which can be defined as the adjoint of the minimal extension of the formal adjoint $P^*$) has domain $\{ u\in L^2(M) \st Pu \in L^2(M)\}$.

\bet\label{thm:sp} Suppose $P\in \Psi^{2,0}_\sc(M)$ is classical,  $P-P^*\in  \Psi^{1,-1-\delta}_\sc(M)$ for some $\delta>0$, and $P$ is non-trapping in the sense of Definition \ref{maindef}. Then, denoting also by $P$ the minimal extension of $P$, $\sp(P)\setminus \rr$ consists at most of isolated points that are in  $\{ \module{\Im \lambda} \leq R\}$ for some $R>0$. 
\eet 
\begin{proof}  We apply Proposition \ref{prop:Fredholm} with $k=\ell=0$ and  $s\in [0,1]$ such that $s>\12$ on $L_-$ and $s<\12$ on $L_+$. For $\Im \lambda>0$ sufficiently large, the  $\errr{u}$ error in the Fredholm estimate \eqref{eq:Fredholm} can be absorbed into the l.h.s. Furthermore, since $s\geq 0$ and  $s-1\leq 0$, we  conclude  
   $\norm{u}   \lesssim    \| (P-\lambda) u \|$.  The same arguments applied to $-P^*+\bar\lambda$ instead  of $P-\lambda$ give  $\norm{u}   \lesssim    \| (P^*-\bar\lambda) u \|$ for   $\Im \lambda>0$ sufficiently large. It follows that the maximal extension  of  $P-\lambda$   has closed range, and the same is true for the maximal extension of $P^*-\bar{\lambda}$. The two estimates also imply that the minimal and maximal extensions are injective, so it is straightforward to conclude  they are boundedly invertible, in  particular $\lambda \notin \sp (P)$ for large $\Im \lambda$. For general $\Im \lambda>0$, because of the extra $\errr{u}$ error term we obtain the weaker conclusion that the minimal and maximal extension of $P-\lambda$ are Fredholm. By analytic Fredholm theory (note that the reduction to bounded operators is straighforward because the domain of $P-\lambda$ does not depend on $\lambda$), $\sp(P)\cap \{ \Im \lambda >0\}$ consists of isolated points. The case $\Im \lambda<0$ is obtained similarly, with replaced $P$ with $-P$.
\end{proof}

\begin{remark} The arguments from \cite{vasyessential} (not using formal self-adjointness) can be used to prove that there are no accumulation points on the real line apart from possibly $0$.
\end{remark}

\begin{remark} Suppose that $D\in \Psi^{1,0}_\sc(M)$ is such that $P=D^2$ satisfies the hypotheses of \ref{thm:sp}. Then the formula $(D-\lambda)^{-1}:=(D+\lambda)(D^2-\lambda^2)^{-1}$ yield a formal inverse
$$
(D-\lambda)^{-1}: \Hsc{s-1,0}\to\Hsc{s-1,0}
$$
for $s\in [0,1]$ with $s>\12$ on $L_- $ and  $s<\12$ on $L_+$. In particular  $(D-\lambda)^{-1}: L^2_{\rm c}(M)\to L^2_{\rm loc}(M)$.      
\end{remark}
    
   \subsection{Alternative estimate for large \texorpdfstring{$\module{\Im \lambda}$}{imaginary part of lambda}}
   
   We also show a large $\module{\Im \lambda}$ variant of the estimates, with better behaviour in $\Im \lambda$, at the cost of having no gain in regularity.
  
         \bep[Large $\module{\Im \lambda}$ estimate]  \label{prop:Fredholm2var} With the same assumptions as in Proposition \ref{prop:Fredholm}, for sufficiently large $R>0$ we have for all  $u \in\Hscq{s',k',\ell'}$ with  $k'-s'\in\open{-\12,k-s}$ and all $\lambda\in Z_{R}^+$,
            \beq\label{eq:Fredholm2var}
 \module{\Im \lambda }    \norm{u}_{s-\12,k,\ell}    \lesssim    \| (P-\lambda) u \|_{s-\12,k,\ell}.
            \eeq
         \eep
\begin{proof} We  modify the proof of Proposition  \ref{prop:Fredholm} as follows: in \emph{Step 2.}~of the positive commutator argument, when applying the Cauchy--Schwarz inequality to $\module{\bra  (P-\lambda) u  , Au \ket}$ we distribute the norms slightly differently as compared to \eqref{eq:cs}, as to obtain a $\| \cdot \|_{s-\12,k,\ell}$ norm instead of $\| \cdot \|_{s-1,k,\ell}$, and we use Young inequality   with   $\module{\Im \lambda}$ times a constant instead of $\epsilon^2$. This gives the estimate
          \beq\label{eq:Fredholm2}
          \norm{u}_{s,k,\ell} +  \module{\Im \lambda}^\12    \norm{u}_{s-\12,k,\ell}    \lesssim    \module{\Im \lambda}^{-\12} \| (P-\lambda) u \|_{s-\12,k,\ell}+ \errr{u}.
          \eeq
The $ \norm{u}_{s,k,\ell}$ term plays no role, and the $\errr{u}$ term can be absorbed into the l.h.s.~for sufficiently large $\Im\lambda$, so we conclude \eqref{eq:Fredholm2var}.
\end{proof}

For general $\lambda$ with $\Im \lambda >0$, we get that for $s,k,\ell$ non-increasing along the Hamilton flow and $\mp(k-s)|_{L_\pm}>0$, the operator
\beq\label{eq:finv}
P- \lambda : \cX^{s,k,\ell} \to \cY^{s,k,\ell}
\eeq
acting on the Feynman spaces
$\cX^{s,k,\ell}\defeq\{ u\in H_\scq^{s',k',\ell'} \st Pu \in \cY^{s,k,\ell} \}$ and $\cY^{s,k,\ell}\defeq H_\scq^{s,k,\ell}$
is Fredholm, and invertible except for $\lambda$ in a set of isolated poles $I_s\subset \cc$.  Furthermore, $(P-\lambda)^{-1}$ is $O(\bra \Im \lambda \ket^{-1})$ as an operator in $B(\cY^{s,k,\ell},\cX^{s,k,\ell})$.

   \subsection{Resolvent wavefront set} Let us now discuss consequence of the estimates for the Schwartz kernel of the inverse. We are mostly interested in  microlocal properties in the interior $M^\inti$ of $M$, uniformly in $\lambda$. Following \cite{Dang2020}, we use an operator version of the wavefront set.
   
   We write $\Psi_{\rm c}^m(M)$ for the standard class of compactly supported pseudo-differential operators on $M^\inti$. 
   
   \begin{definition}\label{defrrr} Let $Z\subset \cc$ and suppose $\{ G(\lambda)\}_{\lambda\in Z}$  is for all $m\in\rr$ a bounded family of operators in $B(H^m_\c(M),H^m_\loc(M))$.
   The \emph{uniform operator wavefront set of order $s\in\rr$ and weight $\bra \lambda\ket^{-1}$} of $\{ G(\lambda)\}_{\lambda\in Z}$ is the set 
   \beq\label{eq:wfs}
   \wfl{1}\big( G(\lambda) \big)\subset S^* M^\inti  \times S^* M^\inti  
   \eeq
   defined as follows:  $(q_1,q_2)$ is {not} in \eqref{eq:wfs} if and only if for $i=1,2$ there exists $B_i\in \Psi^{0}_{\rm c}(M)$ elliptic at $q_i$ and such that for all $s_0,s_1\in\rr$, $\{ \bra \lambda\ket B_1 G(\lambda) B_2^*\}_{\lambda\in Z}$  \mbox{ is bounded in } $B(H^{s_0}_\c(M), H_\loc^{s_1}(M))$.
   \end{definition}
   
   Let  $\complex_{\varepsilon}^{+}\subset \{ \Im \lambda \geq \varepsilon\}$ for some $\varepsilon>0$ and assume for simplicity  $\complex_{\varepsilon}^{+}\subset \{  \delta \leq \arg \lambda \leq 1-\delta\}$ for some $\delta>0$, so that $\module{\Im \lambda}\sim \bra\lambda\ket$ on $\complex_\varepsilon^+$.

   \begin{proposition}\label{prop:wf} Let  $P$ be as in Theorem \ref{thm:sp}. For $s\in \rr$,  if $\complex_{\varepsilon}^{+}\cap I_s=\emptyset$ then the family  $\{ (P-\lambda)^{-1}\}_{\lambda \in \complex_{\varepsilon}^{+}}$ has Feynman wavefront set in the sense that it satisfies:
   \beq
      \wfl{1}\big( (P-\lambda)^{-1} \big)\subset \{ (q_1,q_2)\in S^* M^\inti  \times S^* M^\inti   \st q_1 \succ q_2 \mbox{ or } q_1=q_2  \}.  
   \eeq
   \end{proposition}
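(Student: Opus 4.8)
The plan is to deduce the wavefront set bound from the microlocal propagation and radial estimates underlying Theorem \ref{thm:sp}, transplanted from the $\scq$-setting to the compactly supported interior calculus $\Psi^m_{\rm c}(M)$. The starting point is that, thanks to Proposition \ref{prop:Fredholm} (or its large $\Im\lambda$ version, Proposition \ref{prop:Fredholm2var}), for $\lambda\in\complex_\varepsilon^+$ away from the poles $I_s$ the resolvent $(P-\lambda)^{-1}$ maps $\Hscq{s-1,k,\ell}\to\Hscq{s,k,\ell}$ with norm uniformly bounded in $\lambda$ (indeed $O(\bra\Im\lambda\ket^{-1})$), and in particular $\{(P-\lambda)^{-1}\}_{\lambda\in\complex_\varepsilon^+}$ is a uniformly bounded family in $B(H^m_{\rm c}(M),H^m_{\rm loc}(M))$ for every $m$, so that Definition \ref{defrrr} applies. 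One then needs to show: (i) an \emph{elliptic} improvement away from $\Char_0$ — at a point $q_2\notin\Char_0$ one gains arbitrary regularity on $(P-\lambda)^{-1}B_2^*$, with the extra $\bra\lambda\ket$ weight coming from the $\Im\lambda$ factor in the estimates; and (ii) \emph{propagation of singularities} along the bicharacteristic flow inside $\Char_0$, which gives exactly the direction constraint $q_1\succ q_2$ in the wavefront set, the closed set $\{q_1\succ q_2\}\cup\{q_1=q_2\}$ reflecting that singularities can only be transported forward.

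First I would localize: given $(q_1,q_2)\notin\{q_1\succ q_2\}\cup\{q_1=q_2\}$ we must produce $B_1,B_2\in\Psi^0_{\rm c}(M)$ elliptic at $q_1$, $q_2$ with $\bra\lambda\ket B_1(P-\lambda)^{-1}B_2^*$ uniformly bounded $H^{s_0}_{\rm c}\to H^{s_1}_{\rm loc}$ for all $s_0,s_1$. Write $u=(P-\lambda)^{-1}f$ with $f=B_2^*g$, so $f$ is microsupported near $q_2$ and $(P-\lambda)u=f$. If $q_1\notin\Char_0$, the interior elliptic estimate for $P-\lambda$ in $\Psi^2_{\rm c}(M)$ — noting that at fiber infinity the principal symbol is that of $P$ and $\Im\lambda$ enters as a decaying relative error — gives $B_1u\in H^{s_1}$ with the gain; the $\bra\lambda\ket^{-1}$ weight is supplied because the $L^2$-level control of $u$ itself is $O(\bra\lambda\ket^{-1})\|f\|$ by the global estimate. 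If $q_1\in\Char_0$ but $q_1\not\succ q_2$ and $q_1\neq q_2$, then by the non-trapping structure (Definition \ref{maindef}) the backward bicharacteristic from $q_1$ never meets a neighborhood of $q_2$ before escaping toward a radial set; I would run the propagation of singularities estimate (the interior Hörmander-type statement, which is the local-in-$M^\inti$ shadow of Proposition \ref{pos2}) from that radial set — where the higher/lower decay radial estimates of Propositions \ref{radial3}–\ref{radial4} supply the base regularity uniformly in $\lambda$ — forward to $q_1$, picking up only the already-present regularity of $f$ near its microsupport, which is disjoint from the relevant bicharacteristic segment, hence $B_1u$ is $H^{s_1}$ for all $s_1$. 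Each estimate carries the factor $\module{\Im\lambda}^{\frac12}$ on the left and a matching $\module{\Im\lambda}^{-\frac12}$ can be extracted from the input using the alternative large-$\Im\lambda$ estimate near $\infty$ together with boundedness on compacts of $M^\inti$, yielding the net $\bra\lambda\ket^{-1}=\bra\lambda\ket^{-1}$ weight under the standing assumption $\module{\Im\lambda}\sim\bra\lambda\ket$ on $\complex_\varepsilon^+$.

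The remaining point is uniformity in $\lambda$ over the (possibly unbounded) set $\complex_\varepsilon^+$: the constants in Propositions \ref{pos2}, \ref{radial3}, \ref{radial4} and \ref{prop:Fredholm} are already uniform in $\lambda\in\complex_\varepsilon^+$ by construction (that is the whole point of tracking the $\Im\lambda$ powers), and the commutants used in the positive commutator arguments are $\lambda$-independent, so the localized estimates above hold with $\lambda$-independent constants; combined with the hypothesis $\complex_\varepsilon^+\cap I_s=\emptyset$ which removes the poles, this gives the claimed uniform operator wavefront bound. I expect the main obstacle to be bookkeeping the passage between the $\scq$-Sobolev spaces $\Hscq{s,k,\ell}$ in which the Fredholm and radial estimates are naturally phrased and the purely interior spaces $H^{s_0}_{\rm c},H^{s_1}_{\rm loc}$ in Definition \ref{defrrr} — one has to check that localizing by $B_i\in\Psi^0_{\rm c}(M)$ (supported in $M^\inti$, hence trivially of arbitrary weight order at $\partial M$) reduces the resolved-calculus estimates to ordinary microlocal estimates on $M^\inti$ while preserving the $\bra\lambda\ket$-weight, and to verify that the radial estimates indeed feed the propagation estimate with the correct uniform base regularity. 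This is exactly the argument that lets one bypass the evolutionary parametrix of \cite{Dang2020}, as announced in the introduction.
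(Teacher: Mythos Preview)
Your proposal is correct and follows essentially the same route as the paper. The paper's execution is slightly more streamlined: rather than splitting into an elliptic case and a propagation-from-radial-set case, it invokes Proposition~\ref{radial4} once with a variable order $s$ chosen so that $s\geq s_1$ on $\wf'(B_1)$ and on $L_-$ while $s-1\leq s_0$ on $\wf'(B_2)$, and with $A_0$ chosen so that $\wf'(A_0)\cap\wf'(B_2)=\emptyset$; then $A_0 B_2\in\Psi^{-\infty}_{\rm c}(M)$ kills the source term outright, and the residual $\errr{u}$ is handled by the global Feynman invertibility \eqref{eq:finv}. The $\bra\lambda\ket^{-1}$ weight is obtained simply by citing the large-$\Im\lambda$ variant of the radial estimate (as in Proposition~\ref{prop:Fredholm2var}) rather than by the two-step $\module{\Im\lambda}^{1/2}\cdot\module{\Im\lambda}^{-1/2}$ bookkeeping you sketch.
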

   \begin{proof} Let $(q_1,q_2)$ with $q_1\neq q_2$ and suppose $q_2$ cannot be reached from $q_1$ by a backward bicharacteristic. We can find $B_1,B_2\in \Psi_{\rm c}^0(M)$, formally self-adjoint and elliptic at resp.~$q_1,q_2$, such that every backward bicharacteristic from $\wf'(B_1)$ is disjoint from $\wf'(B_2)$. We want to prove that for all $s_0,s_1\in\rr$ with $s_0\leq s_1$,
   \beq\label{eq:bounded}
   B_1 (P-\lambda)^{-1} B_2 \in B(H^{s_0}_\c(M), H_\loc^{s_1}(M)).
   \eeq
   Let $A_0\in \Psiscq^{0,0,0}(M)$ be elliptic  at the source $L_-$ and  on  the backward bicharacteristics  from $\wf'(B_1)$.   By the higher decay radial estimate, i.e.~Proposition \ref{radial4}, for $s,k,\ell$ non-decreasing along the Hamilton flow and $(k-s)|_{L_-}>-\12$,  
   \beq\label{eq:} 
             \norm{B_1 u}_{s,k,\ell}    \lesssim     \| A_0(P-\lambda) u \|_{s-1,k,\ell}+ \errr{u}.
            \eeq
  In particular, we can take $s$ such that $s\geq s_1$  on $\wf'(B_1)$ and $L_-$, $s-1\leq s_0$ on $\wf'(B_2)$, and $\wf'(A_0)\cap \wf'(B_2)=\emptyset$.  Applying this to $u=(P-\lambda)^{-1} B_2 f$ for $f\in H_\c^{s_0}(M)$  yields
     $$
               \norm{B_1 (P-\lambda)^{-1} B_2 f}_{s,k,\ell}    \lesssim     \| A_0 B_2 f \|_{s-1,k,\ell}+ \errr{(P-\lambda)^{-1} B_2 f}.
               $$
 Since $A_0 B_2\in \Psi^{-\infty}_{\rm c}(M)$ and the last term can be estimated using invertibility of $P-\lambda$ on Feynman spaces \eqref{eq:finv}, taking into account that $B_2$ is compactly supported we conclude 
        $$
                  \norm{B_1 (P-\lambda)^{-1} B_2 f}_{s,k,\ell}    \lesssim     \|  f \|_{s_0},
                  $$
 hence $  \norm{B_1 (P-\lambda)^{-1} B_2 f}_{s_1}    \lesssim   \|  f \|_{s_0}$. This proves  \eqref{eq:bounded}. The $O(\bra \lambda\ket^{-1})$ decay for large ${\Im \lambda}$ is obtained by replacing Proposition \ref{radial4} with the stronger decay version shown in the same way as Proposition \ref{prop:Fredholm2var}. 
 \end{proof}
 
 \subsection{Generalizations for vector bundles and weaker decay} Let us now discuss  generalizations in two possible directions. 
 
 Namely, suppose that we have merely $P-P^*\in  \Psi^{1,-1}_\sc(M)$ instead of $P-P^*\in  \Psi^{1,-1-\delta}_\sc(M)$ with $\delta>0$. Then, the subprincipal symbol $\tilde p$ of $\Im P$ enters the positive commutator estimates, with the effect that the threshold conditions are modified:  \eqref{eq:thresh} is replaced by
  \beq\label{eq:thresh2}
          (k-s)|_{L_-}>-\textstyle\12+\tilde\beta \mbox{ and } (k-s)|_{L_+}<-\textstyle\12+\tilde\beta, 
       \eeq
 where $\tilde\beta\in \cf(\co)$ equals $\tilde p$ times an appropriate weight. For our purposes this is completely unproblematic provided that $|{\tilde \beta}|<\12$, which can be ensured for instance by assuming that $P$ is a small perturbation of  some $P_0$ which satisfies the stronger assumption $\Im P_0\in  \Psi^{1,-1-\delta}_\sc(M)$.
 
 A second, more obvious generalization is to allow for $P$ acting on sections of a smooth vector bundle $E\stackrel{\pi}{\to} M$. We assume that it is endowed with a fiberwise \emph{positive definite} Hermitian form section, which  after integrating on $M$ with the volume form $\vol_g$  defines a scalar product  $\bra \cdot, \cdot \ket_E$. The corresponding Hilbert space is denoted by $L^2(M;E)$. We also assume we are given a scattering connection which gives a notion
 of differentiating sections of $E$ along $\sc$-vector fields, and use it to define $\sc$-pseudo-differential operators $\Psi^{m,\ell}_\sc(M;E)$ and (possibly anisotropic) weighted Sobolev spaces  $H^{m,\ell}_\sc(M;E)$.
 
  Now, if $P\in \Psi^{2,0}_\sc(M;E)$ and $P^*$ stands for the formal adjoint with respect to $\bra \cdot, \cdot \ket_E$, all the arguments in the previous sections apply verbatim.
 
We summarize our results and these observations in the form of a theorem.

 \bet\label{thm:sp2} If $P\in \Psi^{2,0}_\sc(M;E)$ is classical, $P-P^*\in \Psi^{1,-1}_\sc(M;E)$ and there exists $P_0$ such that $P-P_0$ is sufficiently small in $\Psi^{2,0}_\sc(M;E)$, $P_0$ is non-trapping in the sense of Definition \ref{maindef} and  $P_0-P_0^*\in  \Psi^{1,-1-\delta}_\sc(M;E)$ for some $\delta>0$, then the assertions of Theorem \ref{thm:sp} and Proposition \ref{prop:wf}  are true.  
\eet 
 
   %\subsection{Wave operator} Let $(M,g)$ be a Lorenzian scattering space. 
    %    In this chapter we specialize to operators $P\in \Psisc^{2,0}(M)$ such that
 %     \beq
    %  P-\square_g \in \Psisc^{1,-1-\delta}(M)
     % \eeq
     % for some $\delta>0$, where
 %    $
 %     \square_g={{|g|}}^{-\12}\p_\mu({|g|^\12} g^{\mu\nu}\p_\nu)\in \Psisc^{2,0}(M)
    %  $
    %    is the scalar wave operator on a Lorentzian scattering (here  $|g|=\left|\det g\right|$. Most 

  \section{Semi-classical estimates}\label{s:semicl}

  \subsection{Resolution of semiclassical scattering cotangent  bundle} In this section we come back to the issue of showing uniform elliptic estimates for $P-\lambda$ and large $\module{\Im \lambda}$ which are needed to finish the proofs of the estimates in the resolved setting in \secs{s:propagation}{s:resolvent}. 
  
  Equivalently, we can constrain $\lambda$ to a fixed bounded region in $\cc \setminus \rr$ and consider as spectral parameter $\lambda/h^2$, and then study the small $h$ regime with semi-classical methods.  The problem is that for the purpose of integration along an infinite contour we need boundedness statements on non-semiclassical spaces (let us call them \emph{classical} spaces for the sake of disambiguation), so the precise relationship between the semiclassical and standard pseudodifferential algebras becomes crucial. For this reason we adopt the second microlocalization point of view proposed recently in \cite{Vasy:Semiclassical-standard} and relying on a blow-up of the corner $h=0$ at fiber infinity,   and we adapt it  to our setting.  While we a priori only need  elliptic estimates to complete the proofs  in \sec{ss:pe2}, we  discuss in one shot the relevant modifications in the non-elliptic regions; this yields more precise propagation estimates and gives a fully microlocal and self-contained proof.

 %While there is some flexibility, following \cite{Vasy:Semiclassical-standard}, it is geometrically natural to start either with the semiclassical scattering space, or the family scattering space; the former can be considered as a corner blow up of the latter, away from its zero section at $h=0$, and none of the subsequent blowups (resolutions) are near this zero section, so the starting point is irrelevant in this sense. The main advantage of the latter is that it
 %naturally relates to classical spaces, which is important for the
 %functional calculus, but one can suppress this connection until the
 %very end, which is how we will proceed.
 
We focus on the scalar case to keep the notation short, but note that the generalization to vector bundles is immediate. 
 
  We start with the fiber compactified semiclassical scattering
 cotangent bundle
 $$
 \co \times [0,1]_h,
 $$
 postponing for the moment the corner blow-up at $h=0$.  The associated semi-classical calculus $\Psisch^{m,\ell}(M)$ or $\Psisch^{m,\ell,0}(M)$  is obtained by replacing the quantization map $\Op$ by its semi-classical version $\Op_\semi$, given in the $\rr^n$ case by
 \beq\label{eq:opsemi}
 \Op_\semi(a) u(z)= \frac{1}{(2\pi)^n}\int e^{i (z-z')\cdot \zeta_\semi / h} a(z,\zeta_\semi, h )u(z') dz' d \zeta_\semi.
 \eeq
 Throughout this chapter we use $h$ for
   the actual semiclassical parameter, and $\hbar$ as a subscript to
   denote semiclassical objects, in particular we write $\zeta_\semi$  to stress that the use of the covariable refers to semi-classical quantization. The semiclassical and classical quantization are of course related by the simple change of variables $\zeta_\hbar=h\zeta$, which  is  however singular when we relate symbol classes: in fact, if we wanted to obtain the r.h.s.~of \eqref{eq:opsemi} using the classical quantization $\Op$ then the corresponding symbol is $a(z,\zeta_\semi/h, h )$, which produces problematic $h^{-1}$ powers when differentiated in the second argument.
  
   More generally, operators in $\Psisch^{m,\ell,q}(M)$ are by definition obtained from symbols that come with an extra $h^{-q}$ factor. Leaving aside for now the issue of the relationship with classical pseudodifferential algebras,  the problem with this calculus is that
 even away from the boundary, real principal type estimates are quite 
 degenerate. Indeed, writing $P_{\semi}=h^2 P$ for the
 semiclassical version of $P$, the semiclassical scattering principal symbol of
 the large parameter spectral family,
 $$
 P-\lambda/h^2=h^{-2}(h^2 P-\lambda)=h^{-2}(P_{\semi}-\lambda),
 $$
 is $h^{-2}(p -\lambda)$. 
 Note that 
 $$
 P=h^{-2}P_{\semi}\in\Psisch^{2,0,2}(M),\quad
 \lambda/h^2\in\Psisch^{0,0,2}(M),
 $$
 hence $\lambda$
 has at
 most as high order as the wave operator, which is key for elliptic
 purposes. The issue with this for propagation of singularities
 purposes is that the effective order of $P$ for a commutator argument (which drops
 an order in each sense relative to the elliptic estimate) is $1,-1,1$,
 while the skew adjoint part of the operator (which is a key part of
 propagation estimates), $h^{-2}\Im\lambda$, is still of
 order $0,0,2$, so in the differential sense the wave operator
 dominates, but in every other sense $h^{-2}\Im\lambda$. 
 Correspondingly one
 would like to do some blow ups similarly as in the previous chapter to resolve this; recall that one reason for this
 desire is that then we obtain more precise mapping properties on `standard'
 (rather than function analytically designed) function spaces, but even
 more importantly our problem features radial points at which certain
 threshold inequalities need to be satisfied, and having the resolved
 space ensures that the orders can be chosen {\em naturally}, i.e.\
 without the artificial confines of a pseudodifferential algebra that
 cannot make sufficiently precise distinctions.

 Before proceeding we
 remark that elliptic estimates go through without any problems {\em
   already in the semiclassical scattering algebra}, with
 ellipticity at fiber infinity microlocally given by that of $P$; from the elliptic perspective all further blow ups simply
 allow more general weights, while from the propagation perspective the
 latter are much more important. Also, ellipticity at spatial infinity away from fiber
 infinity, as well as semiclassically in the same sense, is given by
 $\Im\lambda$. 
 
 The only remaining issue is then obtaining estimates at
 fiber infinity near the characteristic set, but in fact even propagation estimates work in the semiclassical scattering algebra already. Namely, by repeating the proofs in \secs{s:propagation}{s:resolvent} one gets estimates of the form
 \begin{equation}\label{eq:prop-sing-complex-abs}
 \|u\|_{s,\ell,q-1}+\|u\|_{s-\12,\ell+\12,q-\12} \lesssim  \|h^{-2}(P_\semi-\lambda)u\|_{s-1,\ell+1,q-2}
 \end{equation}
 with the first term arising from the commutator part and the second
 from $\Im\lambda$,  provided that $\ell>-\12$ at the radial sets where we
 start our estimates propagating and $\ell<-\12$ where the propagation
 ends (and $s,q$ are arbitrary). Note that this is {\em suboptimal} in the orders on the second term.
 
 %Note that the
 %restriction for the radial point estimates involve purely the decay
 %order, namely one works with Sobolev spaces $\Hsch^{m,\ell,k}$ with
 %$l<-1/2$ or $l>-1/2$ at the relevant radial points; the other orders
 %are arbitrary.
 
 %due to
 %dealing with expressions like
 %$|\langle(P_\semi-\lambda)u,Au\rangle|$ by using Cauchy---Schwarz
 %combined with $2ab\leq a^2+b^2$, absorbing the appropriate
 % microlocalized norm of $u$
 % in the commutator. 
 
 %Note that there are {\em no} constraints on $m,k$. The
 %constraints on $l$, combined with the suboptimal orders on the second
 %term, are exactly the issue, for instance, with obtaining an $L^2$
 %estimate for $u$, which made the self-adjointness arguments in the
 %scalar setting delicate. However, this is a self-inflicted wound as we
 %can see by performing the appropriate resolutions.

 \begin{figure}[ht]
\begin{center}
 \includegraphics[width=120mm]{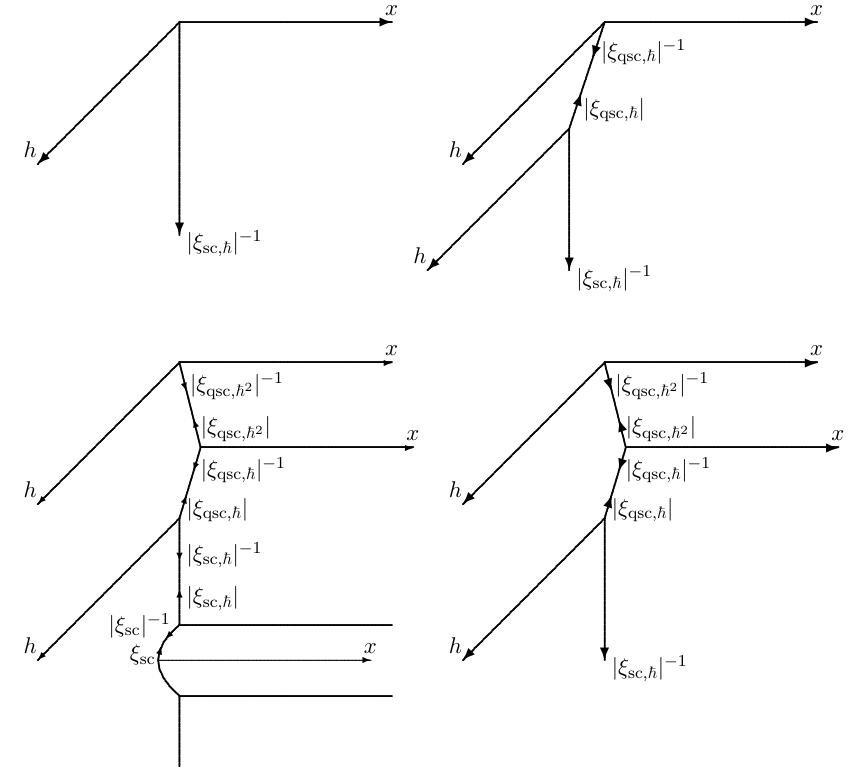}
 \end{center}
 \caption{Resolutions of the fiber compactified semiclassical
   scattering cotangent bundle $\co \times [0,1]_h$. The initial space is on top left. Top
   right is the resolution of fiber infinity at $x=0$, creating the
   semiclassical qsc face. Bottom right is the further resolution of
   $h=0$ at fiber infinity, creating the $h^2$-semiclassical qsc
   face. Finally bottom left adds the blow up of the zero section at
   $h=0$ (which could have been done first); this creates the scattering
   face (with $h$ as a parameter).
 Equivalently one could have started
  with the (non-semiclassical) parameter-dependent fiber compactified scattering cotangent  bundle and blow up fiber infinity at $h=0$---this creates the top left
  picture  away from the semiclassical scattering zero section---and then
  proceed with the next two blowups.
   Geometrically the choices are equivalent; the one we made here
  is a bit simpler. In the second approach the last
  blow up creating the bottom left picture  is not needed, since its front
  face {\em consists} of the fibers of the parameter dependent fiber compactified scattering cotangent
bundle over $h=0$, i.e.~the space we start with in the second approach. } \label{fig:semicl-resolve}
 \end{figure}

 Now moving on to the blow ups, one option is the following: first one
 blows up fiber infinity at $\rho=0$, and then fiber infinity at
 $h=0$; see Figure~\ref{fig:semicl-resolve}. The former simply creates a semiclassical quadratic scattering
 algebra, blown up at the zero section, i.e.\ second microlocalized at
 the zero section. Indeed, this follows from fiber
 infinity at the spatial boundary being defined by
 $\module{\xi_{\scl,\semi}}^{-1}=0$, $\rho=0$, so projective coordinates where
 $\module{\xi_{\scl,\semi}}^{-1}$ is relatively large are
 $\module{\xi_{\scl,\semi}}^{-1}$ and
 $$
 \rho/\module{\xi_{\scl,\semi}}^{-1}=h\rho\module{\xi_{\scl}}=\module{\xi_{\qscl,\semi}},
 $$
 while where $\rho$ is relatively large, the projective coordinates are $\rho$ and
 $\module{\xi_{\scl,\semi}}^{-1}/\rho=\module{\xi_{\qscl,\semi}}^{-1}$. We denote
 the corresponding algebra by $\Psi_{\qscl,\scl,\semi}^{m,k,\ell,q}(M)$ with
 $k$ the new order at the front face, i.e.\ the quadratic scattering
 decay order. In particular,
 corresponding to pullbacks of symbols to the resolved space, we have
 for the non-classical spaces
 \begin{equation}\label{eq:qsc-added-orders}
 \Psisch^{m,\ell,q}(M)=\Psi_{\qscl,\scl,\semi}^{m,m+\ell,\ell,q}(M).
 \end{equation}
 In particular
 $$
 h^{-2}(P_\semi-\lambda)\in\Psi_{\qscl,\scl,\semi}^{2,2,0,2}(M).
 $$
 Exactly as in the previous chapter, our new algebra has the property that symbolic expansions
 come in improvements of size 2 in the new sense, qsc-decay, since they inherit one order gain from both
 boundary hypersurfaces whose intersection was blown up. Also,
 microlocal elliptic estimates from the original $\Psisch^{m,\ell,q}(M)$ calculus are
 inherited, with the advantage that one can also use the new order,
 with elliptic numerology. In this
 algebra one has an improved propagation estimate in two ways.
 
 First, exactly as in the non-semiclassical setting in \sec{ss:pe2}, 
 the radial point estimates have no restrictions at the corner of the
 qsc- and sc-decay faces   since at the sc-decay face $\Im\lambda$
 dominates the commutator, and does so semiclassically as
 well. Correspondingly there are no restrictions on $\ell$. The positive commutator estimate microlocalizes to
 fiber infinity and the condition $\ell>-\12$ or $\ell<-\12$ from the
 scattering perspective translates into, given
 \eqref{eq:qsc-added-orders}, the inequalities  $k-s>-\12$ or
 $k-s<-\12$. This allows both $k$ and $s$ to be high.
 
 Second, in the new space, on the left hand side of
 \eqref{eq:prop-sing-complex-abs} (in the new context), at the corner of the
 qsc- and sc-decay faces the second term, arising from $\Im\lambda$ is
 dominant. Using Cauchy--Schwarz as previously, and absorbing the $\|Au\|^2$ type
 term into the $\Im\lambda$ term, one obtains better numerology, {\em
   with microlocalizers and error term suppressed for the moment, and
   microlocalization only denoted by suppressing the differential order}:
 \begin{equation}\label{eq:prop-sing-complex-abs-qsc-sc}
 \|u\|_{*,k,\ell-\12,q-\12}+\|u\|_{*,k,\ell,q} \lesssim  \|h^{-2}(P_\semi-\lambda)u\|_{*,k,\ell,q-2},
 \end{equation}
 and the first term can simply be dropped.
 Running a similar argument for the corner at the intersection of the
 qsc-decay face and fiber infinity encounters an issue (causing suboptimality) in  that neither term
 dominates at $h=0$: while now they are comparable in the qsc-decay
 sense, and the commutator dominates at fiber infinity, at $h=0$ the
 $\Im\lambda$ term gives the dominant contribution. Proceeding as with
 \eqref{eq:prop-sing-complex-abs} in the current context ({\em thus suboptimally}), regarding the
 commutator term as the dominant for absorbing the $\|Au\|^2$ term in
 the Cauchy--Schwarz inequality, one obtains,  {\em
   with microlocalizers and error term suppressed for the moment, and
   microlocalization only denoted by suppressing the sc-decay order}:
 \begin{equation}\label{eq:prop-sing-complex-abs-qsc-diff}
 \|u\|_{s,k,*,q-1}+\|u\|_{s-\12,k,*,q-\12}\lesssim \|h^{-2}(P_\semi-\lambda)u\|_{s-1,k,*,q-2}.
 \end{equation}
 Note that here we need $k-s>-\12$ or
 $k-s<-\12$, but crucially this allows both $k$ and $s$ to be high. Also
 note that if $h$ is bounded away from $0$ (which is for instance the case for the essential
 self-adjointness discussion), the second term on the left hand side can
 simply be dropped, so that in the non-semiclassical case
 \eqref{eq:prop-sing-complex-abs-qsc-sc} and
 \eqref{eq:prop-sing-complex-abs-qsc-diff} together give an optimal
 result (suppressing the semiclassical order):
 \begin{equation}\label{eq:prop-sing-complex-abs-non-semi}
 \|u\|_{s,k,\ell,*} \lesssim  \|h^{-2}(P_\semi-\lambda)u\|_{s-1,k,\ell,*},
 \end{equation}
 subject to $k-s>-\12$ or
 $k-s<-\12$ in the respective microlocal regions.

 \subsection{Further blow up} In order to resolve this suboptimality, we perform an additional blow up.
 This second
 blow up is then $h=0$ at fiber infinity now defined by
 $|\xi_{\qscl,\semi}|^{-1}=0$, $h=0$, which gives a
 new semiclassical algebra with $h^2$ as the semiclassical parameter. Indeed,
 projective coordinates where
 $|\xi_{\qscl,\semi}|^{-1}$ is relatively large are
 $|\xi_{\qscl,\semi}|^{-1}$ and
 $h/|\xi_{\qscl,\semi}|^{-1}=h^2|\xi_{\qscl}|=|\xi_{\qscl,\semi^2}|$,
 while where $h$ is relatively large, $h$ and
 $|\xi_{\qscl,\semi}|^{-1}/h=|\xi_{\qscl,\semi^2}|^{-1}$. Thus, the new
 face is quadratic scattering, i.e.\ scattering with defining function
 $\rho^2$, with $h^2$ as the effective semiclassical parameter. We denote,
 the new operator space  by $\Psi^{m,k,\ell,p,q}_{\qscl,\scl,\semi^2,\semi}(M)$ with
 $m,\ell,q$ the orders for the lifts of the previous boundary
 hypersurfaces, so $m$ the
 differential order, $\ell$ the sc-decay order and $q$ the
 $h$-semiclassical order, while $p$ is the order corresponding to the
 first blow up, i.e.\ the $h^2$-semiclassical order, and $k$ to the
 second, i.e.\ the quadratic scattering decay order. In particular,
 corresponding to pullbacks of symbols to the resolved space, we have
 for the non-classical spaces
 $$
 \Psisch^{m,\ell,q}(M)=\Psi^{m,m+\ell,\ell,m+q,q}_{\qscl,\scl,\semi^2,\semi}(M),
 $$
 and
 $$
 \Psi_{\qscl,\scl,\semi}^{m,k,\ell,q}(M)=\Psi^{m,k,\ell,m+q,q}_{\qscl,\scl,\semi^2,\semi}(M).
 $$
 In particular
 $$
 h^{-2}(P_\semi-\lambda)\in\Psi^{2,2,0,4,2}_{\qscl,\scl,\semi^2,\semi}(M).
 $$
 Again, our new algebra has the property that symbolic expansions
 come in improvements of size 2 in the two new senses, qsc-decay and
 $h^2$ semiclassical since they inherit one order gain from both
 boundary hypersurfaces whose intersection was blown up.
 
 The advantage of this setup is that at the intersection of the $h$- and
 $h^2$-semiclassical faces, $\Im\lambda$ dominates the commutator, in
 the sense that it has $\geq$ order, with strictly greater order on the
 $h$-semiclassical face, hence it is advantageous to absorb the
 Cauchy--Schwarz error into it and there are no threshold restrictions, while at fiber infinity intersecting
 the $h^2$-semiclassical face the commutator dominates in the analogous
 sense (with strictly greater order at fiber infinity), so it is
 advantageous to absorb errors into it. Correspondingly
 \eqref{eq:prop-sing-complex-abs-qsc-diff} becomes, {\em again
   suppressing the microlocalization},
 \begin{equation}\label{eq:prop-sing-complex-abs-qsc-diff-h2}
 \|u\|_{s,k,*,p,q-1}+\|u\|_{s-1,k,*,p,q} \lesssim  \|h^{-2}(P_\semi-\lambda)u\|_{s-1,k,*,p-2,q-2},
 \end{equation}
 with $k-s>-\12$ or
 $k-s<-\12$ as before; here the first term on the left hand side is the
 near fiber infinity behavior (so the only relevant one there, and thus
 with a corresponding microlocalizer added the second term can be dropped), the
 second is the near $h$-semiclassical behavior, while in the interior
 of the
 $h^2$-semiclassical face the two are comparable.
 
 Altogether we obtain
 \begin{equation}\label{eq:prop-sing-complex-five}
 \|u\|_{s,k,\ell,p,q} \lesssim  \|h^{-2}(P_\semi-\lambda)u\|_{s-1,k,\ell,p-2,q-2},
 \end{equation}
 with $k-s>-\12$ or
 $k-s<-\12$ as before.
 
 \subsection{Resolved semiclassical-classical calculus}Finally, let us place  this in the context of the classical
 scattering algebra following \cite{Vasy:Semiclassical-standard}.
 The classical phase space is introduced into our resolved space by blowing up the
 zero section of the semiclassical cotangent bundle (a neighborhood of
 which has been unaffected by the previous blow ups, so the latter can
 be ignored) at $h=0$ as shown in
 Figure~\ref{fig:semicl-resolve}. Since this is not a corner,
 associating operators with this approach results in serious problems
 due to ill-behaved
 singular symbols. As mentioned already in the caption below  Figure~\ref{fig:semicl-resolve} it is better to
 instead introduce the semiclassical phase space into  the classical
 one by blowing up fiber infinity in the
 parameter-dependent fiber compactified scattering cotangent bundle at
 $h=0$. 
  Indeed, this
 approach has the advantage that both this blow up as well as all
 subsequent ones performed above in the semiclassical setting resolve corners and thus all analytic constructions
 are essentially unaffected (though one obtains more general orders,
 etc). Incorporating this into the operator algebra produces yet
 another order, which we append to the end of our long list $\Psi_{\qscl,\scl,\semi^2,\semi,\mathrm{cl}}^{m,k,\ell,p,q,r}(M)$.

While ellipticity in $\Psi_{\qscl,\scl,\semi^2,\semi,\mathrm{cl}}^{m,k,\ell,p,q,r}(M)$ is defined as in previous constructions, full ellipticity defined as in  \cite{Vasy:Semiclassical-standard} requires additionally invertibility of the normal operator  at the `classical' or `parameter' hypersurface (corresponding to the
 last order) $\rho_{\cl}=0$, namely:

\begin{definition} We say that $A\in \Psi_{\qscl,\scl,\semi^2,\semi,\mathrm{cl}}^{m,k,\ell,p,q,r}(M)$ is \emph{fully elliptic} at  $\rho_{\cl}=0$ if it is elliptic and there exists an invertible operator $A_0\in \Psi_{\qscl,\scl,\semi^2,\semi,\mathrm{cl}}^{\infty,\infty,\infty,p,q,r}(M)$ with 
 $$
A_0^{-1}\in \Psi_{\qscl,\scl,\semi^2,\semi,\mathrm{cl}}^{\infty,\infty,\infty,-p,-q,-r}(M)
 $$ 
 and such that $A-A_0\in \Psi_{\qscl,\scl,\semi^2,\semi,\mathrm{cl}}^{\infty,\infty,\infty,p-1,q-1,r}(M)$.
\end{definition}
 
 Full ellipticity of $A$ implies that the elliptic parametrix can be improved through a standard iterative construction to a parametrix modulo $ \Psi_{\qscl,\scl,\semi^2,\semi,\mathrm{cl}}^{-\infty,-\infty,-\infty,-\infty,-\infty,-\infty}(M)$, which in particular implies invertibility for small $h$. The notion of full ellipticity can be extended to situations where $A$ is elliptic only microlocally in some region, and then the same construction gives a microlocal parametrix improved  in the last three orders.
 
 Proceeding as previously, in the new calculus we derive estimates for
 $$
 P-\lambda/h^2=h^{-2}(P_\semi-\lambda)\in\Psi_{\qscl,\scl,\semi^2,\semi,\mathrm{cl}}^{2,2,0,4,2,2}(M).
 $$
Keeping in mind (microlocal) full ellipticity at $\rho_{\cl}=0$ (the normal operator is simply $h^{-2}\lambda$ times the identity, which is invertible), we obtain eventually the following result.
 
\begin{theorem} With the same notation and hypotheses  as in  Proposition \ref{prop:Fredholm}, in particular assuming the threshold conditions $(k-s)|_{L_-}>-\textstyle\12$ and $(k-s)|_{L_+}<-\textstyle\12$, for all $p,q,r\in \rr$,  all $\lambda$ in a compact subset of $Z_\varepsilon^+$ and  $h$ sufficiently small, we have 
\begin{equation}\label{eq:prop-sing-complex-six}
 \|u\|_{s,k,\ell,p,q,r}\lesssim \|(P-\lambda/h^2)u\|_{s-1,k,\ell,p-2,q-2,r-2}.
 \end{equation}
 \end{theorem}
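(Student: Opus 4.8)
The plan is to prove \eqref{eq:prop-sing-complex-six} by combining, within the six-order resolved calculus $\Psi_{\qscl,\scl,\semi^2,\semi,\mathrm{cl}}^{m,k,\ell,p,q,r}(M)$, a full-ellipticity parametrix at the classical face $\rho_{\cl}=0$ with the propagation and radial estimates obtained from positive commutator arguments in the interiors of the other faces, gluing the microlocal pieces along the bicharacteristic flow via the non-trapping hypothesis of Definition \ref{maindef}. Since $P-\lambda/h^2=h^{-2}(P_\semi-\lambda)\in\Psi_{\qscl,\scl,\semi^2,\semi,\mathrm{cl}}^{2,2,0,4,2,2}(M)$, the estimate has the familiar ``lose one differential order, gain two orders at each face whose intersection was blown up'' numerology, inherited from \eqref{eq:prop-sing-complex-five}; the only genuinely new ingredient relative to \eqref{eq:prop-sing-complex-five} is the last order $r$, which by full ellipticity at $\rho_{\cl}=0$ is active only away from the characteristic set. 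Concretely, I would fix a partition of unity on the boundary of the fully resolved phase space subordinate to a cover by (i) the elliptic set of $P-\lambda/h^2$, including a neighbourhood of $\rho_{\cl}=0$; (ii) a neighbourhood of the characteristic set $\Char_0$ at fiber infinity with the radial sets removed; and (iii) neighbourhoods of the source $L_-$ and the sink $L_+$.

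On region (i) I would build a microlocal parametrix. Away from $\rho_{\cl}=0$ this is the usual elliptic parametrix, ellipticity of $P-\lambda/h^2$ at the scattering-decay face and at the $h$- and $h^2$-semiclassical faces being supplied by $h^{-2}\Im\lambda\neq 0$ (since $\lambda\in Z_\varepsilon^+$) and at fiber infinity by the ellipticity of $P$. Near $\rho_{\cl}=0$ the normal operator of $P-\lambda/h^2$ is $h^{-2}\lambda$ times the identity, which is invertible, so $P-\lambda/h^2$ is \emph{fully elliptic} there in the sense of the definition above, and the iterated construction yields a parametrix modulo $\Psi_{\qscl,\scl,\semi^2,\semi,\mathrm{cl}}^{-\infty,-\infty,-\infty,-\infty,-\infty,-\infty}(M)$. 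In particular $\Char_0$ in the resolved space is disjoint from $\rho_{\cl}=0$, so on regions (ii) and (iii) the variable $\rho_{\cl}$ is bounded below and $r$ plays no role: there one runs exactly the positive commutator arguments of \secs{s:propagation}{s:resolvent} in the five-order calculus $\Psi_{\qscl,\scl,\semi^2,\semi}^{m,k,\ell,p,q}(M)$, as in the discussion leading to \eqref{eq:prop-sing-complex-abs-qsc-diff-h2} and \eqref{eq:prop-sing-complex-five}. On (ii) this is the semiclassical propagation-of-singularities estimate (the analogue of Proposition \ref{pos2}), the Cauchy--Schwarz error being absorbed into whichever of the commutator term and the $h^{-2}\Im\lambda$ term has the larger order at the corner in question ($h^{-2}\Im\lambda$ at the $h$- and $h^2$-semiclassical faces, the commutator at fiber infinity). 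On (iii) one uses the higher decay radial estimate (the analogue of Proposition \ref{radial4}) at the source $L_-$ to start the propagation and the lower decay radial estimate (the analogue of Proposition \ref{radial3}) at the sink $L_+$ to terminate it, with threshold conditions precisely $(k-s)|_{L_-}>-\12$ and $(k-s)|_{L_+}<-\12$ thanks to the resolution and with no constraint on $\ell,p,q$, because at the scattering-decay and semiclassical faces $\Im\lambda$ dominates the commutator.

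Gluing these microlocal estimates along $\Char_0$ exactly as in the proof of Proposition \ref{prop:Fredholm} — using that, by non-trapping, every bicharacteristic runs from $L_-$ to $L_+$ or stays in one of them — produces $\|u\|_{s,k,\ell,p,q,r}\lesssim\|(P-\lambda/h^2)u\|_{s-1,k,\ell,p-2,q-2,r-2}+\|Ru\|$, where $R\in\Psi_{\qscl,\scl,\semi^2,\semi,\mathrm{cl}}^{-\infty,-\infty,-\infty,-\infty,-\infty,-\infty}(M)$: all errors from the cutoffs and the lower-order commutator remainders land in this ideal once the full-ellipticity parametrix at $\rho_{\cl}=0$ has been used to absorb the classical order as well. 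Such $R$ has operator norm $O(h^\infty)$ between the spaces involved, so for $h$ small enough $\|Ru\|$ is absorbed into the left-hand side, yielding \eqref{eq:prop-sing-complex-six}; all constants are uniform for $\lambda$ in a fixed compact subset of $Z_\varepsilon^+$, since the dynamics and the bound $\Im\lambda\geq\varepsilon$ are.

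I expect the principal difficulty to be the bookkeeping at the many corners of the six-fold resolved space: at each corner one must determine which of the commutator contribution and the $h^{-2}\Im\lambda$ contribution has the greater order in each decay and semiclassical sense, so as to know into which term the Cauchy--Schwarz errors should be absorbed, and one must check that in every microlocal region the relevant threshold inequalities collapse to the single pair $(k-s)|_{L_\pm}\gtrless-\12$ — this is the content of the chain \eqref{eq:prop-sing-complex-abs-qsc-diff-h2}--\eqref{eq:prop-sing-complex-five}, and it is exactly what the preceding subsections were arranged to make transparent. Intertwined with this is verifying that the full-ellipticity parametrix at $\rho_{\cl}=0$ can be carried out simultaneously with the propagation and radial constructions, so that the residual operator genuinely lies in $\Psi_{\qscl,\scl,\semi^2,\semi,\mathrm{cl}}^{-\infty,\ldots,-\infty}(M)$, which is what makes the estimate error-free, rather than merely Fredholm, for small $h$.
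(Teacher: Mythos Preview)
Your proposal is correct and follows the same approach as the paper: combine the propagation and radial estimates already established in the five-order calculus (leading to \eqref{eq:prop-sing-complex-five}) with the observation that the normal operator at the classical face $\rho_{\cl}=0$ is $h^{-2}\lambda\cdot\one$, hence invertible, so that (microlocal) full ellipticity there handles the sixth order $r$; then absorb the residual error for small $h$. The paper's own proof is extremely terse---essentially just the phrase ``proceeding as previously\ldots keeping in mind (microlocal) full ellipticity at $\rho_{\cl}=0$''---and your write-up is a faithful expansion of what is meant, including the correct identification of which term (commutator versus $h^{-2}\Im\lambda$) dominates at each corner and why the threshold conditions collapse to the single pair $(k-s)|_{L_\pm}\gtrless -\tfrac12$.
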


Estimates on classical spaces can be now deduced similarly as in \cite{Vasy:Semiclassical-standard}. Namely, let us denote by $\Psi_{\sc,{\rm cl}}^{m,\ell,t}(M)$ the calculus obtained through applying non-semiclassical quantization to symbols with $O(h^{-t})$ seminorms, and by $\Psi_{{\rm qsc},\sc,{\rm cl}}^{m,k,\ell,t}(M)$ the corresponding quadratic scattering-scattering calculus. Then
$$
\Psi_{{\rm qsc},\sc,{\rm cl}}^{m,k,\ell,t}(M)= \Psi_{\qscl,\scl,\semi^2,\semi,\mathrm{cl}}^{m,k,\ell,m+t,t,m+t}(M)$$
and
$$ \Psi_{\sc,{\rm cl}}^{m,\ell,t}(M)= \Psi_{\qscl,\scl,\semi^2,\semi,\mathrm{cl}}^{m,m+\ell,\ell,m+t,t,m+t}(M)
$$
using the relationships between the boundary defining functions (this is analogous to formula (1.2) in \cite{Vasy:Semiclassical-standard}). 
Thus, with the  obvious notation for corresponding variable order Sobolev spaces and keeping in mind the threshold conditions $(k-s)|_{L_-}>-\textstyle\12$ and $(k-s)|_{L_+}<-\textstyle\12$, the estimate \eqref{eq:prop-sing-complex-six} translates to uniform boundedness of
$$
(P-\lambda/h^2)^{-1}:  H_{{\rm qsc},\sc,{\rm cl}}^{s-1,k,\ell,t}(M)=H_{\qscl,\scl,\semi^2,\semi,\mathrm{cl}}^{s-1,k,\ell,s+t-1,t,s+t-1}(M)\to H_{\qscl,\scl,\semi^2,\semi,\mathrm{cl}}^{s,k,\ell,s+t+1,t+2,s+t+1}(M),
$$
with the latter space continuously embedded in
$$
H_{\qscl,\scl,\semi^2,\semi,\mathrm{cl}}^{s-\theta,k,\ell,s+t+1,t+1+\theta,s+t+1}(M)=H_{{\rm qsc},\sc,{\rm cl}}^{s-\theta,k,\ell,t+1+\theta}(M)
$$
for $\theta\in [0,1]$.  In particular, the choice $\theta=\12$ corresponds to an improvement of the high energy estimate stated in Proposition \ref{prop:Fredholm2var}. 

Similarly we obtain the (less useful for us) uniform boundedness of
$$
(P-\lambda/h^2)^{-1}: H_{\sc,{\rm cl}}^{s-1,\ell+1,t}(M) \to H_{\sc,{\rm cl}}^{s-\theta,\ell+\theta,t+1+\theta}(M)
$$
for $\theta\in [0,1]$ if $\ell|_{L_-}>-\textstyle\12$ and $\ell|_{L_+}<-\textstyle\12$.
   
\section{Dirac operators} \label{s:dirac}

\def\SM{{S}}
\def\dual{\!\cdot \!}
\def\varn{\mathbf{n}}

\subsection{Dirac operators on scattering bundles}  %We denote by $\cf(\M;\SM)$ the space of its smooth sections, and use an analogous notation for sections of other vector bundles.

Let $(M,g)$ be a Lorentzian sc-space. Then the Levi-Civita connection $\nabla$ on $(M,g)$ is a \emph{scattering connection}, in particular
$$
\nabla : \cf(M)\to \cf(M ; \be T^*M),
$$
 and it is the lift of a $\b$-connection (see \cite[\S1.5]{Kottke2015}), in particular $\nabla_X: \cf(M) \to \bdf\cf(M)$
for all $X\in \cV_\sc(M)$.

 Various definitions from spin geometry can be adapted to the sc-setting, essentially  by replacing $TM$ by $\be TM$ and $T^*M$ by $\be T^*M$, see for instance \cite{Kottke2015} (cf.~\cite{Melrose1993} for the analogous constructions in the $\b$-setting). In particular, one can define a complex \emph{scattering spinor bundle} $\SM \to M$  in analogy to complex spinor bundles. It is endowed with a linear map
$$
\gamma: \cinf(\M; \be T\M)\to \cinf(\M; \End(\SM))
$$ called \emph{Clifford multiplication}, satisfying the Clifford relations
\beq\label{clifford}
\gamma(X)\gamma(Y)+ \gamma(Y)\gamma(X)= -2 X\dual  g Y \one, \ \ X, Y\in \cinf(\M; \be T\M),
\eeq
 and such that for each $x\in M$, $\gamma_{x}$ induces a faithful irreducible representation of the Clifford algebra ${\rm  Cl}(\be T_{x}M, g_{x})$ in $S_{x}$. Above, $g$ is understood as a section of ${\rm Hom}(\be TM,\be T^*M)$, and $\,\cdot\,$ is the fiberwise duality pairing (not to be confused with Clifford multiplication). 
  
Rather than discussing  spin structures in detail {(we refer to \cite[p.~548--551]{BGM}, \cite[p.~12--13]{Damaschke}, \cite[p.~3--5]{VanDenDungen2018} for more background)}, we assume that we are given a connection $\nabla^{S}$ on $\SM$, called \emph{spin connection}, such that for all $X, Y\in \cinf(\M; \be T\M)$ and  $u\in \cinf(\M;\SM)$,
$$
\nabla_{X}^{S}(\gamma(Y)u)= \gamma(\nabla_{X}Y)u+ \gamma(Y)\nabla_{X}^{S}u.
$$
  Furthermore, we assume that there is a section $\beta\in \cinf(M; {\rm Hom}(S,S^*))$ such that $\beta_{x}$ is Hermitian non-degenerate for each $x\in M$, and: \smallskip
\ben 
\item[a)]  $\gamma(X)^{*}\beta = - \beta \gamma(X)$ for all $X\in \cV_{\rm sc}(M)$;\smallskip
\item[b)]   $i \beta \gamma(e)>0$ for  $e\in \cV_{\rm sc}(M)$ time-like and future directed on  $M^\inti$;\smallskip
\item[c)] $X(\overline{u}\dual \beta v)= \overline{\nabla_{X}^{S}u}\dual \beta v+ \overline{u}\dual \beta \nabla_{X}^{S}v$ for all $X\in \cV_{\rm sc}(M)$ and $u,v\in \cinf(\M;\SM)$.\smallskip
\een

In $\rm b)$, it suffices that the condition $i \beta \gamma(e)>0$ is satisfied for some vector field $e\in \cV_{\rm sc}(M)$ which is time-like and future directed on  $M^\inti$, and then it is satisfied for \emph{all} such vector fields, see \cite[p.~13]{Damaschke}, \cite[p.~4-5]{VanDenDungen2018}.

By integrating $u,v \mapsto u \dual \beta v$ with the  volume form on $(M,g)$ we obtain a Hermitian form $\bra \cdot,\cdot \ket_{\SM}$ on sections of $\SM$. We stress   that  $\bra \cdot,\cdot \ket_{\SM}$ is {not} positive definite in Lorentzian signature.  

In this setting, the  \emph{Dirac operator} is the scattering differential operator given in a local orthonormal frame  $(e_{0}, e_1,\dots, e_{n-1})$ of $\be T\M$ by
\beq\label{eq:Dirac}
{\D}=g^{\mu\nu}\gamma(e_{\mu})\nabla^{S}_{e_{\nu}} \in {\rm Diff}^1_{\sc}(M;S),
\eeq
where we sum over repeated indices and $g^{\mu\nu}$ is the inverse metric (understood as a section of $\be TM\otimes_{\rm s} \be TM$). Using properties $\rm a)$ and $\rm c)$ above and Stokes' theorem one can show that $\D$ is formally self-adjoint for the non-positive  Hermitian form $\bra \cdot,\cdot \ket_{\SM}$. 

To work in a Hilbert space setting, we use $\rm b)$ to define a scalar product. Namely, we fix some vector field $e$ with the properties given in $\rm b)$, and we set
\beq\label{eq:scal}
\bra u  ,  v\ket \defeq  \bra u ,  \gamma(e)  v \ket_S,
\eeq
and write $L^2(M;S)$ for the corresponding Hilbert space. 

\begin{remark}\label{rem:can} If  $(M^\inti,g)$ is  globally hyperbolic then there is a canonical choice consisting of taking $e$ to be the unique past-directed vector field such that
$$
e(x)\dual g(x) e(x)=-1 \mbox{ and } e(x) \dual g(x) v=0
$$
for all $x\in M^\inti$ and all $v$ that are tangent to the foliation by Cauchy surfaces. The corresponding scalar product $\bra \cdot , \cdot \ket$ is then the scalar product arising in field quantization. Note that here in addition we need that $e\in \cV_{\rm sc}(M)$, which requires some extra mild assumptions. 
\end{remark}

\begin{lemma}\label{lem:Dsquare} Suppose $\D$ is a Dirac operator on a scattering spinor bundle over a Lorentzian scattering space $(M,g)$. If $(M^\inti,g)$ is globally hyperbolic then $P\defeq -\D^2$ satisfies %the assumptions of Theorem \ref{thm:sp}, in particular
 $P-P^*\in  \Psi^{1,-1}_\sc(M;\SM)$, %for some $\delta>0 
 where $P^*$ is the formal adjoint for to the scalar product $\bra  \cdot ,  \cdot \ket$ defined in \eqref{eq:scal}.
\end{lemma}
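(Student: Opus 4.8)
The plan is to transfer the formal self-adjointness of $P=-\D^2$ for the indefinite fibrewise pairing $\bra\cdot,\cdot\ket_{\SM}$ over to the scalar product $\bra\cdot,\cdot\ket$ of \eqref{eq:scal} by conjugating with the Clifford endomorphism $\gamma(e)$, so that $P-P^*$ shows up as a commutator $[\D^2,\gamma(e)]$, and then to prove that this commutator gains an order of decay at $\partial M$ because, suitably reorganized, it is built entirely from covariant derivatives of $e$ and from the curvature, all of which decay for a scattering connection.

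First I would note that $\D^*=\D$ for $\bra\cdot,\cdot\ket_{\SM}$ (shown above from properties $\rm a)$ and $\rm c)$) implies $\bra\D^2u,v\ket_{\SM}=\bra\D u,\D v\ket_{\SM}=\bra u,\D^2v\ket_{\SM}$, so $P$ is formally self-adjoint for $\bra\cdot,\cdot\ket_{\SM}$ as well. Since $e$ is everywhere time-like, $\gamma(e)$ is a smooth invertible section of $\End(\SM)$ with $\gamma(e)^{-1}\in\cf(M;\End\SM)$, and $\bra u,v\ket=\bra u,\gamma(e)v\ket_{\SM}$; writing $\bra Pu,v\ket=\bra Pu,\gamma(e)v\ket_{\SM}=\bra u,P\gamma(e)v\ket_{\SM}=\bra u,\gamma(e)^{-1}P\gamma(e)v\ket$ gives $P^*=\gamma(e)^{-1}P\gamma(e)$, hence
\[
P-P^*=\gamma(e)^{-1}\bigl[\gamma(e),P\bigr]=\gamma(e)^{-1}\bigl[\D^2,\gamma(e)\bigr].
\]
As $\D^2\in{\rm Diff}^2_\sc(M;\SM)$ and $\gamma(e)$ is a bundle endomorphism, this commutator is a priori in ${\rm Diff}^1_\sc(M;\SM)\subset\Psisc^{1,0}(M;\SM)$; the content of the lemma is the improvement to $\Psisc^{1,-1}(M;\SM)$, i.e.\ that all of its coefficients vanish at $\partial M$.

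To get the decay I would first record that the scattering Levi-Civita connection is \emph{decaying}: $\nabla_XY\in\bdf\,\cV_\sc(M)$ for all $X,Y\in\cV_\sc(M)$ (equivalently its Christoffel symbols in an sc-frame lie in $\bdf\cf(M)$), which follows from the Koszul formula using that $\cV_\sc(M)=\bdf\cV_\b(M)$ sends $\cf(M)$ into $\bdf\cf(M)$ and that $[\cV_\sc(M),\cV_\sc(M)]\subset\bdf\cV_\sc(M)$, and is also the meaning of $\nabla$ being the lift of a $\b$-connection (cf.~\cite{Kottke2015}); iterating gives $\nabla_X\nabla_Ye\in\bdf\,\cV_\sc(M)$ and shows that the Riemann curvature of $\nabla$ --- hence $R_g$, the twisting curvature $F^E$, and the curvature endomorphism in the Lichnerowicz identity --- lies in $\bdf\cf(M;\cdot)$. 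Then I would expand $[\D^2,\gamma(e)]$ via the Weitzenb\"ock--Lichnerowicz identity $\D^2=\pm\square^{\nabla^{\SM}}+\mathcal R$, with $\square^{\nabla^{\SM}}$ the connection wave operator and $\mathcal R\in\cf(M;\End\SM)$ the curvature endomorphism (the sign being immaterial here): then $[\mathcal R,\gamma(e)]$ is a bundle endomorphism in $\bdf\cf(M;\End\SM)=\Psisc^{0,-1}$ by the decay of the curvature, while for $[\square^{\nabla^{\SM}},\gamma(e)]$ one uses repeatedly the compatibility $[\nabla^S_X,\gamma(e)]=\gamma(\nabla_Xe)$ (exactly the hypothesis relating $\nabla^S$ and $\gamma$): expanding $\square^{\nabla^{\SM}}$ as the trace of the second covariant derivative, each term of the commutator acquires a factor $\gamma(\nabla_Xe)$ or $\gamma(\nabla_X\nabla_Ye)$ with $X,Y\in\cV_\sc(M)$, which lies in $\bdf\cf(M;\End\SM)$, so $[\square^{\nabla^{\SM}},\gamma(e)]\in\bdf\,{\rm Diff}^1_\sc(M;\SM)=\Psisc^{1,-1}(M;\SM)$. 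Combining these two contributions gives $[\D^2,\gamma(e)]\in\Psisc^{1,-1}(M;\SM)$, and therefore $P-P^*=\gamma(e)^{-1}[\D^2,\gamma(e)]\in\Psisc^{1,-1}(M;\SM)$.

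The main obstacle is precisely this last decay. Note that $[\D,\gamma(e)]$ by itself has a \emph{non-decaying} first-order part, with principal symbol $i[\gamma(\xi^\sharp),\gamma(e)]$; this part has to cancel inside $[\D^2,\gamma(e)]$, and the cancellation is exactly the fact that $\D^2$ has scalar principal symbol $-\module{\xi}_g^2\one$ and differs from $\square^{\nabla^{\SM}}$ only by a zeroth-order curvature term --- which is why routing the computation through Lichnerowicz, rather than through $[\D,\gamma(e)]$ directly, is the right bookkeeping. The rest --- verifying the decay of the Christoffel symbols and curvature of a scattering metric and tracking it through the traced second covariant derivative --- is routine but must be done carefully; in the bundle setting one also uses that the scattering spin connection $\nabla^S$ is itself decaying in the same sense, which follows exactly as for $\nabla$.
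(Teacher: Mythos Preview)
Your proof is correct and reaches the same conclusion as the paper's, but the bookkeeping is organized differently. Both arguments begin with the same reduction: using that $P$ is formally self-adjoint for $\bra\cdot,\cdot\ket_{\SM}$ and that $\bra\cdot,\cdot\ket=\bra\cdot,\gamma(e)\cdot\ket_{\SM}$, one obtains $P^*=\gamma(e)^{-1}P\gamma(e)$ and hence $P-P^*=\gamma(e)^{-1}[\D^2,\gamma(e)]$. The paper then commutes $\gamma(e_0)$ directly through $\D^2=g^{\mu\nu}\gamma(e_\mu)\nabla^S_{e_\nu}\,g^{\alpha\beta}\gamma(e_\alpha)\nabla^S_{e_\beta}$ using the Clifford relations (which handle the $\gamma(e_\mu)$ factors at no cost), and is left with error terms that are $\Psi_\sc^{1,0}$ multiples of $[\nabla^S_{e_\mu},\gamma(e_0)]=\gamma(\nabla_{e_\mu}e_0)\in\Psi_\sc^{0,-1}$ and of $[\nabla^S_{e_0},\nabla^S_{e_\mu}]\in\Psi_\sc^{0,-1}$. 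Your route instead passes through the Lichnerowicz identity $\D^2=\square^{\nabla^S}+\mathcal R$: since $\square^{\nabla^S}$ has scalar principal symbol, the only contributions to $[\square^{\nabla^S},\gamma(e)]$ come from the compatibility relation $[\nabla^S_X,\gamma(e)]=\gamma(\nabla_X e)$, and the decay of $\nabla_X e$ and of the curvature $\mathcal R$ gives the $\Psi_\sc^{1,-1}$ conclusion. The virtue of your organization is that it makes explicit why the non-decaying piece of $[\D,\gamma(e)]$ (whose principal symbol is the Clifford commutator $i[\gamma(\xi^\sharp),\gamma(e)]$) cancels inside $[\D^2,\gamma(e)]$; the paper's direct computation achieves the same cancellation via the Clifford relations but leaves this more implicit. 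Both ultimately rest on the same decay facts for scattering connections, namely $\nabla_XY\in\bdf\,\cV_\sc(M)$ for $X,Y\in\cV_\sc(M)$ and the induced decay of curvature.
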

\begin{proof} Without loss of generality we can assume that $e_0=e$ and $e\dual g e=-1$. The Clifford relations imply $\gamma(e)^2=-1$, and since $\D$ is formally self-adjoint for $\bra \cdot , \cdot \ket_S$, we get $(\D^2)^*=-\gamma(e) \D^2 \gamma(e)$ for $\bra \cdot , \cdot \ket$. By commuting $\gamma(e)$ through $\D^2$, using Clifford relations we find that $(\D^2)^*=\D^2+R$, where $R$ is the sum of terms in $\Psi^{1,0}_\sc$ times either  $[\nabla^S_{e_0},\nabla^S_{e_\mu}]\in \Psi^{0,-1}_\sc$ or $[\nabla^S_{e_\mu}, \gamma(e_0)]\in\Psi^{0,-1}_\sc$, hence  $R\in\Psi^{1,-1}_\sc$.
\end{proof}

Recall that on top of a non-trapping hypothesis our estimates require  that the imaginary part  $\12(P-P^*)$ should either have better decay and be in 
$\Psi^{1,-1-\delta}_\sc(M;\SM)$ for some $\delta>0$   (Theorem \ref{thm:sp2}), or be small in $\Psi^{1,-1}_\sc(M;\SM)$ semi-norms (see Theorem \ref{thm:sp2}).

This motivates the following definition, which we formulate in the more general setting of a Clifford module $E\to M$. Recall that Clifford modules are defined similarly as spin bundles except that we no longer require that $\gamma_x$ is the Spin representation of the Clifford algebra, and each fiber ${E}_x$ is a left module for the Clifford algebra ${\rm Cl}(T_xM)\otimes_{\mathbb{R}}\mathbb{C} $ (see \cite[p.~42]{Roe}) for the parallel discussion in the Riemannian case). As explained therein, a typical example of Clifford module can be obtained by tensoring the spin bundle $S$ by an auxiliary Hermitian vector bundle  
$(H,\nabla^H)$ with vertical metric $h$ and endowed with a Hermitian connection $\nabla^H$, then the resulting 
bundle ${E}=S\otimes H$ endowed with Clifford connection $\nabla^{{E}}=\nabla^S\otimes \one_H+\one_S\otimes \nabla^H$ is indeed a Clifford module.
The Clifford action $\gamma$ on $S$ gets induced on $S\otimes H$ just by acting on the first factor of the tensor product. 
The Hermitian form $\beta$ on spinors in ${S} $ gets upgraded on the twisted module ${S}\otimes H$ by setting
$$ \overline{(s\otimes f)} (\beta\otimes h) (s\otimes f):= \left(\overline{s} \beta (s) \right) \left( \overline{f}hf  \right)  $$
for a section $s\otimes f$ of $S\otimes H $ and extended by bilinearity,
hence the positivity property of $i(\beta\otimes h)\gamma(e)$ is preserved.

\begin{definition}\label{def:SM} We will say that $\D\in{\rm Diff}^1_{\sc}(M;E)$ is a \emph{Dirac operator on a small perturbation of (generalized) Minkowski space}  if $\D$ is given by formula \eqref{eq:Dirac}, possibly with $S\to M$ replaced by a Clifford module $E\to M$, and   $P=-\D^2$ satisfies either
\begin{enumerate}
\item  $P-P^*\in\Psi^{1,-1-\delta}_\sc(M;\SM)$ for some $\delta>0$ and is non-trapping in the sense of Definition \ref{maindef}, or,
\item $P$ is close to a non--trapping operator $P_0\in\Psi^{2,0}_\sc(M;E)$ as above, i.e.~$P-P_0$ is sufficiently small in $\Psi^{2,0}_\sc(M;E)$ and $P_0-P_0^*\in  \Psi^{1,-1-\delta}_\sc(M;E)$ for some $\delta>0$.
\end{enumerate}
\end{definition}

The term \emph{generalized} appears in the definition because we allow a priori for more general reference geometries than Minkowski space. However, we drop that term in the sequel as  the most natural examples are provided by perturbations of Minkowski space.

Indeed, version (2)  is verified for instance in the case when $M=\overline{\rr^n}$ and $g$ is a sufficiently small sized perturbation of the Minkowski metric (in the sense of scattering metrics) and $S\to M$ is the spinor bundle canonically obtained from the unique spin structure on $(M,g)$ (existence and uniqueness is a consequence of $\rr^n$ being parallelizable and orientable; one can then examine the construction to conclude that it yields a scattering bundle). Version (1) of the definition is verified when the perturbation is sufficiently fastly decaying; we give an elementary example below.

\begin{lemm}
Let $M=\overline{\rr^n}$, $g_0$ the flat Minkowski metric and $g$ a scattering metric such that $g-g_0$ is Schwartz. Let $\D$ be the corresponding Dirac operator and $P=-\D^2$. Then  
$P-P^*\in \Psi^{1,-\infty}_\sc(M;S)$. 
\end{lemm}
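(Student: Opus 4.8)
The plan is to reduce the statement to a computation in a collar neighbourhood $W$ of $\p M$, where $g$ is a small Schwartz perturbation of the flat metric $g_0$, and there to compare $P$ and $P^*$ with the flat Dirac operator, for which the analogous difference is manifestly zero.

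\emph{Step 1: global structure and reduction to $\p M$.} Since $\D$ is formally self-adjoint for the indefinite form $\bra\cdot,\cdot\ket_S$ and the scalar product \eqref{eq:scal} is $\bra\cdot,\cdot\ket=\bra\cdot,\gamma(e)\cdot\ket_S$ with $e$ everywhere timelike (so $\gamma(e)$ is invertible), one has $P^*=-\gamma(e)^{-1}\D^2\gamma(e)$, hence $P-P^*=\gamma(e)^{-1}[\D^2,\gamma(e)]$, which is a \emph{first-order} differential operator. Thus $P-P^*\in\Psi^{1,-\infty}_\sc(M;S)$ is equivalent to its coefficients vanishing to infinite order at $\p M$, a statement confined to a sufficiently small collar neighbourhood $W$ of $\p M$; on such a $W$ the metric $g$ is a small perturbation of $g_0$, so $e_0=\p_{x_0}$ is timelike and future directed there, and since the $g$-future causal cones are convex we may choose the vector field $e$ in \eqref{eq:scal} to equal $e_0$ on $W$ (the natural choices, such as the one of Remark~\ref{rem:can}, have this property in any case). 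With this choice, $P-P^*=\gamma(e_0)^{-1}[\D^2,\gamma(e_0)]$ on $W$.

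\emph{Step 2: comparison with the flat model.} Trivialise $S$ over $W$ and let $\D_0$ be the associated flat Dirac operator, built from $g_0$, the constant Clifford multiplication $\gamma_0$ (with $\gamma_0(e_0)$ a constant matrix) and the trivial spin connection; then $\D_0^2=-g_0^{\mu\nu}\p_\mu\p_\nu\otimes\one_S$ is a constant-coefficient scalar operator, so $[\D_0^2,\gamma_0(e_0)]=0$, and subtracting this,
$$
P-P^*\;=\;\gamma(e_0)^{-1}\Big([\D^2-\D_0^2,\gamma(e_0)]\;+\;[\D_0^2,\gamma(e_0)-\gamma_0(e_0)]\Big)\qquad\mbox{on }W .
$$
The key input — and the step I expect to require the most care — is that $\D-\D_0$ is an sc-differential operator of order $1$ with coefficients in $\bdf^\infty\cf(M)$ on $W$. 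One writes $\D$ in the frame $(\p_{x_0},\dots,\p_{x_{n-1}})$ of $\Tsc M$ (which extends smoothly to $\p M$ and is $g_0$-orthonormal) and the fixed trivialisation of $S$, and checks that the metric and inverse-metric components, the Clifford multiplication, and the spin-connection coefficients all differ from their (constant) flat counterparts by elements of $\bdf^\infty\cf$; for the Clifford multiplication and spin connection this uses that, for a small perturbation, the passage from a $g$-orthonormal frame to a $g_0$-orthonormal one — together with its lift to the spin level — can be chosen as $\one$ plus a Schwartz term. In particular $\gamma(e_0)-\gamma_0(e_0)\in\bdf^\infty\cf(M;\End(S))$ and $\D^2-\D_0^2\in\Psi^{2,-\infty}_\sc(M;S)$ on $W$.

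\emph{Step 3: conclusion.} Granting Step 2, the rest is bookkeeping in the sc-calculus: $[\D^2-\D_0^2,\gamma(e_0)]\in\Psi^{1,-\infty}_\sc$ since commuting with the order-$(0,0)$ operator $\gamma(e_0)$ drops the differential order by one; $[\D_0^2,\gamma(e_0)-\gamma_0(e_0)]\in\Psi^{1,-\infty}_\sc$ since differentiating a Schwartz bundle endomorphism again yields Schwartz coefficients with one order gained; and composing with $\gamma(e_0)^{-1}\in\cf(M;\End(S))$, which is bounded with bounded inverse on $W$ because $e_0$ is non-null there, preserves $\Psi^{1,-\infty}_\sc$. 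Hence the coefficients of $P-P^*$ vanish to infinite order at $\p M$, which by Step 1 gives $P-P^*\in\Psi^{1,-\infty}_\sc(M;S)$.
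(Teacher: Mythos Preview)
Your proof is correct and follows essentially the same approach as the paper: both reduce to the commutator $[\D^2,\gamma(e_0)]$ (equivalently $P\gamma(e)-\gamma(e)P$), subtract the flat model where this commutator vanishes, and conclude that the remaining pieces are Schwartz because $P-P_0\in\Psi^{2,-\infty}_\sc$ and $\gamma(e_0)-\gamma_0(e_0)$ has fast decay. The only cosmetic difference is that the paper writes the comparison as a six-term telescoping sum for $P\gamma(e)-\gamma(e)P$, while you organise the same terms via the two-term commutator splitting $[\D^2,\gamma(e_0)]=[\D^2-\D_0^2,\gamma(e_0)]+[\D_0^2,\gamma(e_0)-\gamma_0(e_0)]$; your version is a bit more compact but relies on the same ingredients.
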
 
\begin{proof}
  The role of $P_0$ is played by minus the square of the Minkowski Dirac operator, which satisfies $P_0^*=P_0$ w.r.t.~the scalar product defined using the vector field $e_0$ from Remark \ref{rem:can}. All commutators appearing in the proof of Lemma \ref{lem:Dsquare} have fast decay   since $g-g_0$ is Schwartz. More precisely, the difference
$P-P^*$ can be controlled since
$$
\bea P\gamma(e)-\gamma(e)P&=P(\gamma(e)-\gamma_0(e_0))+(P-P_0)\gamma_0(e_0)\fantom+P_0\gamma_0(e_0)-\gamma_0(e_0)P_0+\gamma_0(e_0)(P_0-P)+(\gamma_0(e_0)-\gamma(e))P, 
\eea
$$
which is in $\dot\cC$  
because $P-P_0\in \Psi^{2,-\infty}_\sc(M;S)$, $\gamma(e)-\gamma_0(e_0)$ has fast decay, and $P_0\gamma_0(e_0)=0$ since $P_0$ has constant coefficients and $\gamma_0(e_0)$ is a constant section.
\end{proof}

\subsection{Bochner--Lichnerowicz formula} For the purpose of relating functions of $P$ with e.g.~the scalar curvature, a crucial tool is the Bochner--Lichnerowicz formula.
We briefly introduce its ingredients; note that here the discussion merely requires that ${E}\to M$ is a Clifford module on an a priori general Lorentzian manifold $(M,g)$. 

Let $K\in \Omega^2_\sc(M;\text{End}({E}))$ denote the curvature of
the connection $\nabla^{{E}}$ on ${E}$ at $x\in M$. In an orthonormal frame $(e_i)_i$ with vanishing Lie bracket at $x$, $K(e_i,e_j)=[\nabla^{{E}}_{e_i},\nabla^{{E}}_{e_j}]\in C^\infty_{\sc}(\text{End}({E}))  $. 
We introduce the corresponding Clifford contraction $\mathbf{K}\in C^\infty_{\sc}(\text{End}({E}))$ of $K$ defined as
$\mathbf{K}=\sum_{i<j} \gamma(e_i)\gamma(e_j) K(e_i,e_j) $.

Then, following the proof of \cite[p.~44]{Roe} (which applies verbatim to the Lorentzian case) we find that 
$$
\D^2=\nabla^{{E}*}\nabla^E+\mathbf{K}
$$ with $\nabla^{E*}$ being the formal adjoint of $\nabla^E$ for the non-positive inner product obtained from $\beta$.
If $R^{TM}$ is the Riemann curvature tensor, we define the Riemannian endomorphism as $R^{{E}}\in \Omega^2_\sc(M;\text{End}({E}))$ as the  operator
$$R^{{E}}(X,Y)=\sum \gamma(e_k)\gamma(e_l) \left\langle R(X,Y)e_k,e_l\right\rangle_g.$$
By a similar proof as in \cite[p.~46--48]{Roe} we find that
the curvature operator $K$ (given by $K(e_i,e_j)=[\nabla^{{E}}_{e_i},\nabla^{{E}}_{e_j}]$) decomposes as
$$
K=R^{{E}}+F^{{E}},
$$
where $F^{{E}}$ is an element of $\Omega^2(M,\text{End}({E}))$ {commuting} with the Clifford action and called the \emph{twisting curvature}. 

%We can conclude that the difference $K-R^{S}$ commutes with all elements of the form $\gamma(Z)$, which implies that
%the curvature operator $K$ decomposes as
%$$
%K=R^{S}+F^S
%$$
%where $F^S$ is called the \emph{twisting curvature}. 

The Bochner--Lichnerowicz formula  says that
\beq\label{eq:BL}
\slashed{D}^2=\nabla^{{E}*}\nabla^{{E}}+\mathbf{F}^{{E}}+\frac{1}{4}R_g\one_{{E}},
\eeq
where the formal adjoint is taken \emph{w.r.t.~the non-positive Hermitian form} $\bra\cdot,\cdot \ket_S$ (in contrast to other formulas in the paper), $R_g\in \cf(M)$ is the scalar curvature of $(M,g)$ and $$
\mathbf{F}^{{E}}=\sum_{i<j}\gamma(e_i)\gamma(e_j)F^{{E}}(e_i,e_j)\in \cf(M;\End({E}))
$$ is the Clifford contraction of the twisting curvature. The proof in the Riemannian case applies verbatim, see e.g.~\cite[Prop.~3.18 p.~48]{Roe},~\cite[Thm.~3.52]{BGV} and \cite[Thm.~8.8]{lawsonspin}.

We will also make crucial use of the formula (with sum over repeated indices) 
\begin{eqnarray*}
\nabla^{{E}*}\nabla^{{E}}  =  
-\nabla^{{E}}_j g^{ij}\nabla^{{E}}_i-g^{ij}{\vert g\vert}^{-\12} (\partial_j{\vert g\vert}^{\12}) \nabla^{{E}}_i,
\end{eqnarray*}
which in combination with the Bochner--Lichnerowicz identity \eqref{eq:BL} gives 
\beq\label{eq:BL2}
\slashed{D}^2=-\nabla^{{E}}_jg^{ij}\nabla^{{E}}_i-g^{ij}{\vert g\vert}^{-\12} (\partial_j{\vert g\vert}^{\12})\nabla^{{E}}_i+  \mathbf{F}^{{E}}+\frac{1}{4}R_g\one_{E}.
\eeq
The reader can find in \cite[\S11.3 p.~206]{Connes2008a} an analogue discussion of the above decomposition in the Riemannian case.

\subsection{Complex powers and local invariants} Although $P$ is not necessarily self-adjoint, we can still define its complex powers at follows.

\begin{definition}\label{def:cp} For $\varepsilon>0$, let  $\tilde Z_\varepsilon\subset \cc$ be the following contour in the upper half-plane
$$
\tilde Z_{\varepsilon} = e^{i(\pi-\theta)}\opencl{-\infty,\textstyle\frac{\varepsilon}{2}}\cup \{\textstyle\frac{\varepsilon}{2} e^{i\omega}\, | \, \pi-\theta<\omega<\theta\}\cup  e^{i\theta}\clopen{\textstyle\frac{\varepsilon}{2},+\infty}.
$$  
Let $Z_{\varepsilon}$ a small deformation of $Z_{\varepsilon}:=\tilde Z_{\varepsilon}+i\varepsilon$ which does not intersect $\sp(P)$.   For $\Re \cv > 0$, we define the operator
\beq\label{eq3}
(P-i\varepsilon)^{-\cv}:=\frac{1}{2\pi i}\int_{Z_\varepsilon} (z-i\varepsilon)^{-\cv} (P-z)^{-1} dz,
\eeq
where the integral converges in the strong operator topology of $B(L^2(M;E))$ since  $\sp(P)\subset \{ \module{ \Im \lambda }< R  \}$.
\end{definition}

Note that the definition depends on the choice of contour $Z_\varepsilon$: one should think of $(P-i\varepsilon)^{-\cv}$ as being defined canonically \emph{up to spectral subspaces associated with possible complex poles}. We  prove that for two choices of contours $Z_{1,\varepsilon}, Z_{2,\varepsilon}$ with the same asymptotics at infinity, the complex powers differ at most by a finite rank operator.

\begin{lemma}\label{l:ambiguity_complex_powers}
Let $Z_{1,\varepsilon}$ be a contour as in Definition \ref{def:cp} and let
$Z_{2,\varepsilon}$ be any other contour in the upper half-plane avoiding $\sp(P)$ and obtained by a  finite deformation of $Z_{1,\varepsilon}$. Then the difference 
\begin{eqnarray*}
\frac{1}{2\pi i}\left(\int_{Z_{1,\varepsilon}} (z-i\varepsilon)^{-\cv} (P-z)^{-1} dz-\int_{Z_{2,\varepsilon}} (z-i\varepsilon)^{-\cv} (P-z)^{-1} dz\right)
\end{eqnarray*}
is an operator of finite rank
whose Schwartz kernel is $C^\infty$ in $M^\inti$.
\end{lemma}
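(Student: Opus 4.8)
The plan is to realize the difference as a finite sum of residues of the resolvent. Since $Z_{2,\varepsilon}$ is a finite deformation of $Z_{1,\varepsilon}$, the two contours coincide outside a compact subset of $\{\Im z>0\}$, so $Z_{1,\varepsilon}-Z_{2,\varepsilon}$ is a finite union of piecewise smooth closed loops contained in $\{\Im z>0\}$ and avoiding $\sp(P)$. On $\{\Im z>0\}$ the $B(L^2(M;E))$-valued function $z\mapsto (z-i\varepsilon)^{-\cv}(P-z)^{-1}$ is holomorphic off the branch cut of $(z-i\varepsilon)^{-\cv}$ --- which runs from $i\varepsilon$ into the opening of the keyhole contour of Definition \ref{def:cp} and thus meets neither the contours nor the region swept by the deformation --- and off the poles of $(P-z)^{-1}$, which by Theorem \ref{thm1} form a discrete set of finite-multiplicity resonances contained in $\{0<\Im z\leq R\}$. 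Hence only finitely many resonances are enclosed, and Cauchy's theorem expresses the difference in the statement as a finite integer combination of the residues $R_{z_0}\defeq\res_{z=z_0}\!\big((z-i\varepsilon)^{-\cv}(P-z)^{-1}\big)$ over the enclosed resonances $z_0$. It therefore suffices to prove that each $R_{z_0}$ has finite rank and a Schwartz kernel that is $C^\infty$ on $M^\inti$.

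Write the Laurent expansion $(P-z)^{-1}=\sum_{j=1}^N (z-z_0)^{-j}A_j+B(z)$, with $B$ holomorphic near $z_0$. Finite multiplicity of the pole --- which is precisely what the meromorphic Fredholm analysis of \sec{s:resolvent} recalled in Theorem \ref{thm1} delivers --- means that each $A_j$ has finite rank; since $(z-i\varepsilon)^{-\cv}$ is holomorphic at $z_0$, $R_{z_0}$ is a finite linear combination of the $A_j$ (with coefficients the Taylor coefficients of $(z-i\varepsilon)^{-\cv}$ at $z_0$) and so has finite rank. Expanding $(P-z)(P-z)^{-1}=\one$ in powers of $z-z_0$ yields $(P-z_0)^{N-j+1}A_j=0$, so $\mathrm{Ran}(R_{z_0})\subset\bigcup_m\ker\!\big((P-z_0)^m\big)$; taking adjoints and using $\big((P-z)^{-1}\big)^*=(P^*-\bar z)^{-1}$ gives likewise $\mathrm{Ran}(R_{z_0}^*)\subset\bigcup_m\ker\!\big((P^*-\bar z_0)^m\big)$. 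Being finite rank, $R_{z_0}=\sum_i\bra\,\cdot\,,\psi_i\ket_E\,\phi_i$ with $\phi_i\in\mathrm{Ran}(R_{z_0})$ and $\psi_i\in\mathrm{Ran}(R_{z_0}^*)$, so its Schwartz kernel is the finite sum $\sum_i\phi_i(x)\overline{\psi_i(y)}$ (up to the fixed density factor). The claim thus reduces to showing that generalized eigenfunctions of $P$, and of $P^*$, are $C^\infty$ in $M^\inti$.

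For genuine eigenfunctions this is the interior regularity assertion of Theorem \ref{thm1}. For a solution of $(P-z_0)^m u=0$ with $m>1$ I would induct on $m$: the function $w\defeq (P-z_0)u$ lies in $\ker\!\big((P-z_0)^{m-1}\big)$ and is $C^\infty$ in $M^\inti$ by the inductive hypothesis, and the microlocal regularity mechanism underlying Theorem \ref{thm1} --- interior elliptic regularity off the characteristic set, combined with the higher-decay radial estimate at the source $L_-$ (Proposition \ref{radial4}) propagated forward along the non-trapping bicharacteristic flow by propagation of singularities (Proposition \ref{pos2}) --- applies verbatim to the inhomogeneous equation $(P-z_0)u=w$ with $w\in C^\infty(M^\inti)$, giving $u\in C^\infty(M^\inti)$; the identical argument applies to $P^*$, which satisfies the same structural hypotheses. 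The step requiring the most care is precisely this last one --- upgrading interior regularity from genuine to generalized eigenfunctions, equivalently checking that the interior regularity argument tolerates a smooth inhomogeneity --- while the contour-deformation step is routine once the discreteness and finite multiplicity of resonances are in hand from Theorem \ref{thm1}, the only mild point there being to keep the branch cut of $(z-i\varepsilon)^{-\cv}$ clear of the swept region.
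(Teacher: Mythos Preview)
Your proof is correct and follows essentially the same approach as the paper: reduce the difference to a finite sum of residues enclosed by the compactly supported cycle $Z_{1,\varepsilon}-Z_{2,\varepsilon}$, invoke analytic Fredholm theory for finite rank, and then bootstrap interior smoothness of (generalized) resonant and coresonant states via the microlocal estimates. The paper carries out the bootstrap by directly iterating the global Fredholm estimate of Proposition~\ref{prop:Fredholm} (increasing $s$ and $k$ simultaneously to preserve the threshold conditions) rather than citing the constituent radial and propagation estimates, and it phrases the cyclic/Jordan structure slightly differently, but the substance is the same.
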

\begin{proof}
First, the difference of $1$-chains $Z_{1,\varepsilon}-Z_{2,\varepsilon}$ is a compactly supported boundary enclosing a compact domain $\mathbb{D}\subset \{z \in \mathbb{C} \st  \Im z >0\}$. Therefore, by analytic Fredholm theory, if we denote by $\text{Res}(P)$ the resonances of $P$ in the upper half-plane then
$$
\frac{1}{2\pi i}\int_{\partial\mathbb{D}} (z-i\varepsilon)^{-\cv} (P-z)^{-1} dz=\sum_{\lambda\in \text{Res}(P)\cap \mathbb{D}  } (\lambda-i)^{-\cv} \Pi_\lambda, 
$$
where $\Pi_\lambda$ is the spectral projector on the resonant states of $P$ for $\lambda\in \text{Res}(P)$; here we made use of the fact that analytic Fredholm theory ensures the poles of the meromorphic resolvent to have finite multiplicity.
The resonance eigenspace $E_\lambda=\Ran \Pi_\lambda$ is not necessarily spanned by
eigenfunctions of $P$ as there might be Jordan blocks. It suffices in that case to discuss the regularity of cyclic families within 
$E_\lambda$, meaning some $N$-uple of resonant states $(u_1,\dots,u_N)$ of $E_\lambda$ such that $(P-\lambda)u_1=0$,  $(P-\lambda)u_2=u_1$, $\dots$,  $(P-\lambda)u_N=u_{N-1}$.
We  know that $u_1\in H^{s,k,\ell}_\scq(M)$ for some  $s,k,\ell \in C^\infty(  \overline{ ^{\rm sc}T^*}M )$ which are non-decreasing along the flow and such that $(k-s)|_{L_-}>-\frac{1}{2} $ and $(k-s)|_{L_+}<-\frac{1}{2}$.
By the Fredholm estimate from Proposition \ref{prop:Fredholm}, using $(P-\lambda)u_1=0$ we find that
$$ \Vert u_1\Vert_{s+1,k,\ell}+\module{\Im\lambda}^{\frac{1}{2}}\Vert u_1 \Vert_{s+\frac{1}{2},k,\ell}  \lesssim \Vert u_1\Vert_{\scriptscriptstyle{S},\scriptscriptstyle{K},\scriptscriptstyle{L}} .   $$
The above estimate tells us that $u$ belongs in fact to $H_{\rm sc,qsc}^{s+1,k,\ell}$. 
Now letting $s$ become $s+1$ and $k$ become $k+1$ and bootstrapping the above estimate yields $u_1\in C^\infty$ in $M^\inti$, more precisely $u_1\in \bigcap_{p\geqslant 0} H^{s+p,k+p,\ell_p}_{\rm sc,qsc}$ where $\ell_p$ is some sequence depending on $p$ which is irrelevant as it concerns only the decay at infinity. Now using the equation $(P-\lambda)u_2=u_1$ and again the global estimate for $u_2$ yields
$$ \Vert u_2\Vert_{s+1+p,k+p,\ell_p}+\module{\Im \lambda}^{\frac{1}{2}}\Vert u_2 \Vert_{s+\frac{1}{2}+p,k+p,\ell_p}  \lesssim \Vert u_1 \Vert_{s+p,k+p,\ell_p} + \Vert u_2\Vert_{\scriptscriptstyle{S}_2,\scriptscriptstyle{K}_2,\scriptscriptstyle{L}_2}    $$
for all $p$, hence  the regularity of $u_2$. Then using the cyclic structure, we can propagate the regularity to $u_1,\dots,u_N$.
 Repeating now the same proof for coresonant states, i.e.~generalized resonant states of the adjoint $P^*$ for the resonance $\lambda$, we deduce that 
both $\Ran \Pi_\lambda$ and $\Ran \Pi_\lambda^*$ have images contained in $C^\infty(M)$. This yields the smoothness of the kernel of $\Pi_\lambda$ by duality and concludes the proof. 
\end{proof}

The dependence on the contour of integration   turns out to be inessential in the  formula that we  prove, which gives a  relationship between complex powers of $P$ and the local geometry of $(M,g)$. We deduce it   using a Hadamard parametrix argument similarly as in as in \cite{Dang2020}, with  necessary adjustments in the vector bundle, non self-adjoint case.

\begin{theorem}\label{thm:final} Let $(M,g)$ be a Lorentzian scattering space of even dimension $n$. Suppose $\D \in{\rm Diff}^1_{\sc}(M;E)$ is a Dirac operator on a small perturbation of Minkowski in the sense of  Definition \ref{def:SM}, and let  $P=-\D^2$.
 Then for all $\varepsilon>0$, the Schwartz kernel of $(P-i \varepsilon)^{-\cv}$ has for $\Re\cv>\frac{n}{2}$ a well-defined on-diagonal restriction $(P-i \varepsilon)^{-\cv}(x,x)$, which extends as a meromorphic function of $\cv\in\cc$ with poles at $\{\n2,$ $\n2-1$, $\n2-2$, $\dots$, ${1}\}$. Furthermore, 
$$ 
\bea 
\lim_{\varepsilon\rightarrow 0^+} \res_{\cv=\frac{n}{2}-1} \tr_E\left((P- i\varepsilon)^{-\cv}\right)(x,x)= \frac{{\rk}(E)  R_g(x)  }{{i}6(4\pi)^{\n2} \Gamma\big(\frac{n}{2}-1\big) } +  \frac{  2 \tr_E \big( F^E\big)(x)  }{{i}(4\pi)^{\n2} \Gamma\big(\frac{n}{2}-1\big) } ,
\eea 
$$
where $\rk(E)$ is the rank of $E\to M$.
\end{theorem}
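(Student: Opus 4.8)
The plan is to follow the Hadamard parametrix strategy of \cite{Dang2020}: construct a parametrix for $P-z$ near the diagonal, insert it into the contour integral \eqref{eq3}, and read off the meromorphic continuation and the residues of the on-diagonal restriction. Two ingredients differ from \cite{Dang2020}: the global hyperbolicity hypothesis used there is replaced by the uniform microlocal resolvent estimates of \sec{s:resolvent} (and their semiclassical sharpening in \sec{s:semicl}), and the input on the local geometry is supplied by the Bochner--Lichnerowicz identity \eqref{eq:BL}--\eqref{eq:BL2} for the twisted Dirac operator.

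First I would establish the global control of the resolvent kernel. By Definition \ref{def:SM} the operator $P=-\D^2$ satisfies the hypotheses of Theorem \ref{thm:sp2}, so \eqref{eq:finv} holds and, by Proposition \ref{prop:wf} together with the large $\module{\Im\lambda}$ estimate of Proposition \ref{prop:Fredholm2var} (improved as in \sec{s:semicl}), the family $\{(P-z)^{-1}\}_{z\in Z_\varepsilon}$ has Feynman operator wavefront set with $O(\bra z\ket^{-1})$ weight along $Z_\varepsilon$. From this I would deduce that for $\Re\cv$ large the Schwartz kernel of \eqref{eq3} restricts to the diagonal of $M^\inti\times M^\inti$: away from the conormal bundle of the diagonal the $z$-integral converges with all derivatives and defines a smooth kernel, while near the diagonal the kernel is conormal and one cuts it off with a compactly supported pseudodifferential operator in $M^\inti$; the decay along $Z_\varepsilon$ guarantees that any contribution which is, uniformly in $z$ and with integrable decay, a smoothing operator in $M^\inti$ produces a function of $\cv$ holomorphic in the relevant half-plane. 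In particular the finite rank smoothing ambiguity of Lemma \ref{l:ambiguity_complex_powers} affects neither the poles nor the residues, so one may work with the specific contour of Definition \ref{def:cp}.

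Next I would carry out the local construction and the contour integral. In a geodesically convex chart, using \eqref{eq:BL2} to write $P$ as a Laplace-type (normally hyperbolic) operator with connection $\nabla^E$ and explicit zeroth order part, I would build for $\Im z>0$ the parametrix
$$
H_N(z;x,y)=\sum_{j=0}^{N}u_j(x,y)\,\mathcal R^{j}_{z}\big(\Gamma(x,y)\big),
$$
where $\Gamma(x,y)$ is the signed squared geodesic distance of $g$, the $\mathcal R^j_z$ are the $z$-dependent Feynman Riesz kernels (the powers of $\Gamma$ being meaningful because $Z_\varepsilon$ stays in $\{\Im z>0\}$), and the Hadamard coefficients $u_j$, smooth bundle homomorphisms $E_y\to E_x$, solve the usual transport equations along geodesics built from $g$, $\nabla^E$ and the zeroth order part of $P$. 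One has $u_0(x,x)=\one_E$ and $u_1(x,x)$ is the first subleading Hadamard coefficient on the diagonal, an explicit universal combination of $R_g(x)\one_E$ and the Clifford-contracted twisting curvature at $x$ as dictated by \eqref{eq:BL2}. Taking $N$ larger than $\tfrac{n}{2}$ (and, for meromorphy on all of $\cc$, larger than any prescribed level), the remainder $R_N$ defined by $(P-z)H_N=\mathrm{Id}-R_N$ is uniformly in $z\in Z_\varepsilon$ a kernel smoothing of arbitrarily high order across the diagonal; composing it with $(P-z)^{-1}$ via the global control just established shows that $H_N-(P-z)^{-1}$ contributes a function of $\cv$ holomorphic near $\cv=\tfrac{n}{2}-1$. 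Inserting $H_N$ into \eqref{eq3} and setting $x=y$, the $j$-th summand yields $u_j(x,x)$ times an explicit scalar integral over $Z_\varepsilon$ which extends meromorphically in $\cv$ with a single simple pole, at $\cv=\tfrac{n}{2}-j$, and residue a universal constant divided by $\Gamma(\cv)$; exactly as in \cite{Dang2020} the terms with $j\ge\tfrac{n}{2}$ carry no pole because the apparent singularity cancels, so the poles of $(P-i\varepsilon)^{-\cv}(x,x)$ are confined to $\{\tfrac{n}{2},\tfrac{n}{2}-1,\dots,1\}$, the overall factor $i$ stemming from the rotation of $Z_\varepsilon$ into the upper half-plane. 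Evaluating at $\cv=\tfrac{n}{2}-1$ gives $\res_{\cv=n/2-1}(P-i\varepsilon)^{-\cv}(x,x)=\frac{1}{i(4\pi)^{n/2}\Gamma(n/2-1)}\,u_1(x,x)+o(1)$ as $\varepsilon\to0^+$, and taking $\tr_E$ and inserting the value of $u_1(x,x)$ obtained from \eqref{eq:BL2}---with the remaining universal constant pinned down by comparison with the scalar wave operator, which must reproduce the $\tfrac16 R_g$ of \cite{Dang2020}---yields the claimed identity, with $\rk(E)R_g/6$ coming from the curvature of $g$ and $2\tr_E F^E$ from the twisting curvature.

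The hard part will be the first step: one needs the resolvent estimates to be genuinely uniform in $z$ along the \emph{unbounded} contour $Z_\varepsilon$ and to come with enough decay in $\bra z\ket$ to make \eqref{eq3} converge in a strong enough sense, to justify restricting the resulting kernel to the diagonal, and to ensure that the parametrix remainder together with the off-diagonal pieces contribute only holomorphic terms. This is precisely what the resolved and semiclassical calculi of \secs{s:propagation}{s:semicl} provide---the $O(\bra\lambda\ket^{-1})$ weight in Proposition \ref{prop:wf} and its semiclassical refinement---and it is what allows one to bypass the evolutionary parametrix construction and drop the global hyperbolicity assumption of \cite{Dang2020}. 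A secondary, more computational difficulty is to carry the vector-bundle Hadamard coefficients and the Clifford-algebraic contractions through the argument, which is where \eqref{eq:BL2} is used.
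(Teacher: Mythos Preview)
Your proposal is correct and follows essentially the same route as the paper: global control of $(P-z)^{-1}$ via the Feynman wavefront set estimate of Proposition~\ref{prop:wf} with $O(\langle z\rangle^{-1})$ decay along $Z_\varepsilon$, a Hadamard parametrix with transport coefficients $u_j$ adapted to the bundle setting, contour integration reducing the residue at $\cv=\tfrac{n}{2}-1$ to $u_1(x,x)$, and the computation of $u_1(x,x)$ from the Bochner--Lichnerowicz identity \eqref{eq:BL2}. The only point where the paper is more explicit is that it computes the universal constants directly (obtaining $u_1(0)=\tfrac{R_g}{12}\one+F^E$ by solving the first two transport equations in normal coordinates) rather than pinning them down by comparison with the scalar case.
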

\begin{proof} With the ingredients from the previous chapters at hand, the proof is largely  analogous to the scalar case \cite[Thm.~1.1]{Dang2020}, with minor adaptations needed to deal with  bundle aspects and possible non-real spectrum. Let us recall the main steps of the proof in the scalar case and explain how they are modified:

\step{1}  It is shown there that for $s\in \rr$ and for $\lambda$ along $Z_\varepsilon$, the uniform wavefront set       $\wfl{12}\big( (P-\lambda)^{-1} \big)$ is contained in the Feynman wavefront. In the present setting, this is replaced by the more general Proposition \ref{prop:wf}, which applies to $P=-\D^2$ thanks to Lemma \ref{lem:Dsquare} and keeping in mind that $Z_\varepsilon$ does not intersect $\sp(P)$.  
Note also that Proposition \ref{prop:wf} yields even better, $O(\bra\lambda \ket^{-1})$ decay along $Z_\varepsilon$.  

\step{2} Next, \cite{Dang2020} constructs a \emph{Hadamard parametrix} $H_N$ for $(P-\lambda)^{-1}$. First, one defines a family of distributions on $\rr^n$,  holomorphic in $\cv\in\cc\setminus\{-1,-2,\dots\}$ for $\Im \lambda\geqslant 0$, $\lambda\neq 0$, given by the Fourier transform
$$
\Fs{\lambda}\defeq\frac{\Gamma(\cv+1)}{(2\pi)^{n}} \int e^{i\left\langle x,\xi \right\rangle}\left(\vert\xi\vert_\eta^2-i0-\lambda\right)^{-\cv-1}d^{n}\xi,
$$
where $\vert\xi\vert_\eta^2 =  -\xi_0^2+\sum_{i}\xi_i^2$ is minus the Minkowski quadratic form. The distributions $\Fse{z}$ can be pull-backed to a neighborhood $\cU$ of the diagonal $\diag\subset M\times M$ using the exponential map $\exp_{x_0}$ centered at an arbitrary point $x_0\in M$, and then by  inspecting the dependence on $x_0$ one shows that this defines a family of distributions $\Fe{z}=  \pazocal{D}^\prime(\pazocal{U})$. 

In the general case when $P$ acts on sections of a vector bundle $E\to M$ with a connection $\nabla^E$, we pull-back the metric $g$ together with the bundle $E\to M$ and the connection $\nabla^E$, and use the same notation $g, E, \nabla^E$ for the pull-backed objects. We then trivialize the bundle by parallel transporting a fixed orthonormal frame spanning the fiber $E_0$ along rays emanating from $0$ (i.e.~along radial geodesics).  

The Hadamard parametrix of order $N$ is the distribution
$$
H_N(\lambda,.)=\sum_{k=0}^N \chi u_k \Fe[k]{\lambda}\in \pazocal{D}^\prime({M\times M;E\boxtimes E^*}),
$$
where $\chi\in \cf_{\rm c}(M\times M)$ is a cutoff function equal $1$ on a small neighborhood of the diagonal $\Delta$, and $u_k\in \cf(M\times M;E\boxtimes E^*)$ solve the hierarchy of  \emph{transport equations}: 
$$
2k u_k+hu_k+2\nabla_V^E u_k+2Pu_{k-1}=0, \quad k\in \nn_0,
$$
where $V$ is the Euler radial vector field, $h=V(\log|g|^{\12})$ is viewed as a diagonal multiplication operator (here $V$ acts as a Lie derivative), and by convention $u_{k-1}=0$ for $k=0$. Using the trivialization introduced above, the system of transport equations can be written in terms of $N\times N$ matrices. In the vector bundle case the connection $\nabla^S$ replaces the Lie derivative,  but other than that the same arguments as in the scalar case give existence of the $u_k$.

 Then,  a straightforward computation shows that $H_N$ solves  
\beq\label{eq:paraprox}
\left(P-\lambda\right)  H_N(\lambda,.)={ \module{g}^{-\frac{1}{2}}} \one_{\End(E)}\delta_{\diag}+(Pu_N)\mathbf{F}_N(\lambda,.)\chi+r_N(\lambda,.),
\eeq
where $r_N$ is an error term involving derivatives of $\chi$. The microlocal estimates on the error terms $(Pu_N)\mathbf{F}_N\chi$ and $r_N$ given in  \cite[Lem.~6.2]{Dang2020}  generalize immediately to the vector bundle case. In combination with the microlocal estimate from Step 1, they imply that it is possible to apply $(P-\lambda)^{-1}$ to both sides of \eqref{eq:paraprox} and to control microlocal properties of each term obtained by composition, uniformly in $\lambda$. Consequently,  
\beq\label{eq:reshad}
(P-\lambda)^{-1}=H_N(\lambda) + E_N(\lambda),
\eeq
 where   $E_N(\lambda,.)$  is regular as wanted near the diagonal, and decays at the desired rate along $Z_{\varepsilon}$ for  $N$  large enough (the precise statement is a direct  generalization of \cite[Prop.~6.3]{Dang2020}).  Next, one can insert \eqref{eq:reshad} into the identity
 $$
 (P-i\varepsilon)^{-{\cv}}=\frac{1}{2\pi i}\int_{Z_\varepsilon} (\lambda-i\varepsilon)^{-\cv} (P-\lambda)^{-1} d\lambda,
 $$
valid in the sense of $B(L^2(M;E))$ if $\Re \cv>0$. 
 Since 
 $$\frac{1}{2\pi i}\int_{Z_\varepsilon}(\lambda-i\varepsilon)^{-\cv} \mathbf{F}_\varm(\lambda,.) d\lambda =\frac{(-1)^\varm\Gamma(-\cv+1)}{\Gamma(-\cv-\varm+1)\Gamma(\cv+\varm)}   \mathbf{F}_{\varm+\cv-1}(i\varepsilon,.)$$
 and the integral of $E_N(\lambda,x,x)$ is holomorphic in $\cv$, the computation of the residues of $(P-i\varepsilon)^{-{\cv}}(x,x)$ is reduced to the analysis of  $\mathbf{F}_{\beta}(i\varepsilon,x,x)=F_\beta(i\varepsilon,0)$ and Gamma function factors, and to the computation of $u_k(x,x)$. This allows to conclude eventually
 \beq\label{eq:res}
 \res_{\alpha=\frac{n}{2}-k}(P-i\varepsilon)^{-\alpha}(x,x)= \frac{i u_k(x,x)}{2^n \pi^{\frac{n}{2}}(\frac{n}{2}-k-1)!}\in {\rm End}(S_x).
\eeq
as in the scalar case.

\step{3} Since the $\mathbf{F}_{\beta}(i\varepsilon,x,x)$ are scalar, only the computation of  $u_k(x,x)$ requires  some extra attention in the vector bundle case. We focus on the term $k=1$ relevant for the pole at $\cv=\frac{n}{2}-1$, and the term $k=0$ needed to initiate the recursion.  The $k=0$ transport equation 
reads 
$$ 
2\nabla^E_V u_0+h u_0=0, \quad u_0(0)=\one_{E_0},  $$
 hence $u_0(x)=\vert g(0)\vert^\frac{1}{4} \vert g(x)\vert^{-\frac{1}{4}} \one$ in our trivialization.
 
 The second transport equation is affected by the curvature term and reads
 $$
 \nabla_V^E u_1+\Big(1+\frac{h}{2}\Big)u_1=-Pu_0 $$
 and since both $\nabla_V^E u_1$ and $h$ vanish at $x=0$, this implies that
 $$
 u_1(0)=-Pu_0(0)=-P\big( \vert g(0)\vert^{\frac{1}{4}}\vert g(.)\vert^{-\frac{1}{4}}\one \big)(0).
 $$
 Using the expansion of the metric $g$ in normal coordinates and the Bochner--Lichnerowicz formula in the form \eqref{eq:BL2}, we find that
 $$
 -Pu_0(0)=\left(-\frac{R_g}{6}+\frac{R_g}{4}\right)\one+F^E.
 $$
 This implies that
 $$
 u_1(0)=\frac{R_g}{12}\one+F^E
 $$
 in normal coordinates. By combining this with \eqref{eq:res} we obtain the asserted result. 
\end{proof}

\begin{remark}\label{smallh}  Thanks to Theorem \ref{thm:final} (or strictly speaking, its direct analogue for $P+i \varepsilon$ instead of $P-i\varepsilon$, which is a somewhat more convenient choice in this paragraph) we can generalize a result from  \cite{Dang2020} about  functions $f(P+i\varepsilon)$ of $P$ for   $f$ Schwartz and such that  $\widehat{f}\in C^\infty_{\rm}(\open{0,+\infty})$. First, using our definition of  complex powers $(P+i\varepsilon)^{-\alpha}$ we can give  meaning to $f(P+i\varepsilon)$ by the  formula
$$f(P+i\varepsilon)=\frac{1}{2\pi i} \int_{\Re(\alpha)=c}e^{i\alpha\frac{\pi}{2}} (P+i\varepsilon)^{-\alpha} \Gamma(\alpha)\pazocal{M}\widehat{f}(\alpha)d\alpha, $$
where $\cM$ denotes the Mellin transform. The function $f(P+i\varepsilon) $ defined above  depends on the complex contour used to define complex powers of $P$, but for two such choices the difference is a smoothing operator of finite rank as discussed in Lemma~\ref{l:ambiguity_complex_powers}. Then, the proof from~\cite{Dang2020} applies verbatim to show that $f(P+i\varepsilon)$ has smooth Schwartz kernel on the diagonal and satisfies a small $h$ asymptotic of the form
$$
\bea
\tr_E\big( f( h (P+i\varepsilon))\big)(x,x)&=\frac{e^{in\frac{\pi}{4}}c_0}{i2^n\pi^{\frac{n}{2}}}h^{-n} \text{rk}(E) +
\frac{e^{i(n-2)\frac{\pi}{4}}c_0}{i2^n\pi^{\frac{n}{2}}}h^{-n+2}((-m^2-i\varepsilon)\text{rk}(E)\fantom +\text{rk}(E)\frac{R_g(x)}{12}+\tr(F^E)(x)
+{O}(h^{-n+4})
\eea
$$
where $c_0=\int_0^\infty\widehat{f}(t)t^{\frac{n}{2}-1}dt$, and further terms can be expressed in terms of Lorentzian analogues of heat kernel coefficients.
\end{remark}

{\small
\subsubsection*{Acknowledgments} 
%The authors would like  for useful discussions. 
The authors would like to thank the Erwin Schrödinger in Vienna for its hospitality during the programs  ``Spectral Theory and Mathematical Relativity'' and ``Nonlinear Waves and Relativity'' and the Institut Henri Poincaré for its hospitality during the program ``Quantum Fields Interacting with Geometry''. Support from the grant ANR-20-CE40-0018 of the Agence Nationale de la Recherche is gratefully acknowledged.  A.V.~gratefully acknowledges support from
the National Science Foundation under grant number DMS-2247004.  \medskip }

\bibliographystyle{abbrv}
\bibliography{complexpowers}

\end{document}